\newtheorem{theorem}{Theorem}[section]
\newtheorem{corollary}[theorem]{Corollary}
\newtheorem{lemma}[theorem]{Lemma}
\newtheorem{proposition}[theorem]{Proposition}
\theoremstyle{definition}
\newtheorem{remark}{Remark}
\newcommand{\be}{\begin{equation}}
	\newcommand{\bel}[1]{\begin{equation}\label{#1}}
		\newcommand{\ee}{\end{equation}}
	\newcommand{\barr}{\begin{eqnarray}}
		\newcommand{\earr}{\end{eqnarray}}
	\newcommand{\bars}{\begin{eqnarray*}}
		\newcommand{\ears}{\end{eqnarray*}}
	\newtheorem{subn}{\name}
	\newcommand{\bsn}[1]{\def\name{#1}\begin{subn}}
		\newcommand{\esn}{\end{subn}}
	\newtheorem{sub}{\name}[section]
	\newcommand{\bs}{\begin{sub}}
		\newcommand{\es}{\end{sub}}
	\newcommand{\bth}[1]{\def\name{Theorem}
		\begin{sub}\label{t:#1}}
		\newcommand{\blemma}[1]{\def\name{Lemma}
			\begin{sub}\label{l:#1}}
			\newcommand{\bcor}[1]{\def\name{Corollary}
				\begin{sub}\label{c:#1}}
				\newcommand{\bdef}[1]{\def\name{Definition}
					\begin{sub}\label{d:#1}}
					\newcommand{\bprop}[1]{\def\name{Proposition}
						\begin{sub}\label{p:#1}}
						\newcommand{\BA}{\begin{array}}
							\newcommand{\EA}{\end{array}}
						\newcommand{\BAN}{\renewcommand{\arraystretch}{1.2}
							\setlength{\arraycolsep}{2pt}\begin{array}}
							\newcommand{\BAV}[2]{\renewcommand{\arraystretch}{#1}
								\setlength{\arraycolsep}{#2}\begin{array}}
								\newcommand{\BSA}{\begin{subarray}}
									\newcommand{\ESA}{\end{subarray}}
								\newcommand{\BAL}{\begin{aligned}}
									\newcommand{\EAL}{\end{aligned}}
								\newcommand{\BALG}{\begin{alignat}}
									\newcommand{\EALG}{\end{alignat}}
								\newcommand{\BALGN}{\begin{alignat*}}
									\newcommand{\EALGN}{\end{alignat*}}
								\def\angb<#1>{\langle #1 \rangle}%% angle bracket
								\newcommand{\supp}{\opname{supp}}
								\def\N{\mathbb{N}}
								\def\R{\mathbb{R}}
								\def\O{\Omega}
								\def\supp{\text{\rm supp}}
								\numberwithin{equation}{section}
								\theoremstyle{definition}
								\def\O{\Omega}
								\def\supp{\text{\rm supp}}
								\newenvironment{formula}[1]{\begin{equation}\label{eq:#1}}
									{\end{equation}\noindent}
								\def\Fi#1{\begin{formula}{#1}}
									\def\Ff{\end{formula}\noindent}
\newcommand{\nlp}[3]{\|{#1}\|_{L^{#2}{(#3)}}}
\def\r{{\mathcal{R}}}
\begin{document}
\fontsize{11pt}{5pt}\selectfont

%%%%%%%%%%%%
%%Setting up the TITLE and AUTHOR

  \title[Stability  and limiting properties]{Stability  and limiting properties of  generalized principal eigenvalue for inhomogeneous nonlocal cooperative system}

\author{Ninh Van Thu}

\address{School of Applied Mathematics and Informatics,
	Hanoi University of Science and Technology, no. 1 Dai Co Viet, Hanoi, Vietnam}
\email{thu.ninhvan@hust.edu.vn}
\address{Faculty of Mathematics and Applications, Saigon University, 273 An Duong Vuong st., Ward 3, Dist.5, Ho Chi Minh City, Viet Nam}
\author{Hoang -Hung Vo$^{*}$}
\email{vhhung@sgu.edu.vn}
\thanks{$^*$ Corresponding author}

%\date{March 12, 2017}

\begin{abstract} The principal eigenvalue for  linear elliptic operator has been known to be one of  very useful tools to investigate many important partial differential equations. Due to the pioneering works of Berestycki et al. \cite{BCV1,BCV2}, the study of qualitative properties for the principal eigenvalue of nonlocal operators has attracted a lot of attention of the community from theory to application (For examples \cite{LL22-1,LLS22,LZ1,LZ2,SLLY,DDF,XLR}). In this paper, motivated from the study of mathematical  modeling the dynamics of infectious diseases  in \cite{NV1, ZZLD, WD}, we analyze the asymptotic properties of the principal eigenvalue of nonlocal inhomogeneous  cooperative  system  with respect to the dispersal rate and dispersal range. This can be done thanks to the deep results of Rainer \cite{Ra13}, Kriegl and  Michor \cite{KM03} on the stability of  eigenvalue of the variable matrices of zero-order coefficients and extends many results from \cite{BCV1,LL,NV1}. Our work provides a fundamental step to investigate the nonlinear system modeling the spreading of the transmitted diseases as mentioned. 
\end{abstract}

\subjclass[2020]{Primary 35B50, 47G20; secondary 35J60}

\date{\today.}

%\dedicatory{This paper is dedicated to our advisors.}

\keywords{Double free boundary problem, principal eigenvalue,  nonlocal diffusion, long time behavior}

\maketitle
\tableofcontents
\section{ Introduction and statement of the results}

Over the past decades, the study of  the equation or system with nonlocal dispersals to model natural phenomena in biology, physics and related applied science has become a contemporary trend in mathematics. One of the important topics on this trend. In the pioneering work, Berestycki et al. have proposed the following equation
\begin{equation}
u_t=J\star u-u+f(x,u)=0\quad\quad t>0, x\in\R^N
\end{equation}
where $f$ is  of Fisher-KPP type nonlinearity, to study the evolution of species that has a long range dispersal strategy. This equation been employed in several contexts, ranging from population dynamics, epidemiology combustion theory to social sciences by the observation that the intrinsic variability in the capacity of the individuals to disperse generates, at the scale of a population, a long range dispersal of the population. In this research, the eigentheory for the generalized principal eigenvalue:
\begin{equation}\label{lambdap}
J\star \varphi-\varphi+\partial_s f(x,0)\varphi+\lambda_p\varphi=0\quad\quad\textrm{in $\R^N$},
\end{equation}
plays the central role. Many qualitative properties of $\lambda_p$ in (\ref{lambdap}) such as approximation to the principal eigenvalue of second order elliptic operator, scaling limits, and asymptotic behaviors with respect to the parameters  have been carefully investigated in \cite{BCV2}.   Very recently,
the qualitative properties and asymptotic behaviors of the principal eigenvalues  with respect to the parameters for time-periodic operator have become the topics of intensive research in this community,  the interested readers are refered to \cite{baihe,LL22-1,LLS22,Liu3,Liu4,SLLY,SX,ShenVo,Vo1}. There has been a huge amount of improvements   and extensions  for applications and for purely theoretical purposes  inspired from these works, for instance the long time dynamics \cite{ CDLL, DDF,XLR,WD,KR}, the wave propagation and estimate the spreading speed   \cite{BSS19, BCHV, DWL22,Liu4,LZ1,XLR}

In the seminal works Cappasso et al. \cite{Cappasso1,Cappasso2} have proposed a partially degenerate  reaction-diffusion model to study for oro-faecal transmitted diseases in the European Mediterranean regions. This model has been strongly interested and developed in the community in both pure and applied aspects. In particular, in \cite{HY13}, Hsu and Yang studied the front propagation of the  general epidemic model
\begin{equation}\label{wave}
\left\lbrace\begin{array}{ll}
u_t=d_1u_{xx}-\alpha_1 u+h(v)\\
v_t=d_2v_{xx}-\alpha_2v+g(u)
\end{array}\right.\quad\quad t>0,x\in\R
\end{equation}
while Wu and Hsu considered this model with time delay in the deep work \cite{WuHsu}. The global dynamics for  free boundary model of (\ref{wave})  has been recently studied by many authors \cite{NV1, ZZLD, WD}, especially Nguyen and Vo \cite{NV1} investigated the long time dynamics for  
  cooperative system with free boundaries and couple nonlocal dispersals read by
 \begin{align}\label{nonlocalfree} \left\{\begin{array}{lll} u_t =  d_1\left[\displaystyle\int\limits_{g(t)}^{h(t)}J_1(x-y)u(t,y)dy - u(t,x)\right] - au(t, x) + H\left(v(t, x)\right), & t>0,\,\,\,\, \, g(t)<x<h\left(t\right), \\ v_t = d_2\left[\displaystyle\int\limits_{g(t)}^{h(t)}J_2(x-y)v(t,y)dy - v(t,x)\right] -bv(t,x)+ G\left(u(t,x)\right), & t>0,\,\,\,\, \, g(t)<x<h\left(t\right), \\ u\left(t, x\right) = v\left(t, x\right)=0, & t>0,\,\, x = g(t) \,\,\,\text{or}\,\,\, x = h(t),   \\ h^{\prime}(t) = \mu\left( \displaystyle\int\limits_{g(t)}^{h(t)}\displaystyle\int\limits_{h(t)}^{\infty}J_1(x-y)u(t, x)dydx  +\rho\displaystyle\int\limits_{g(t)}^{h(t)}\displaystyle\int\limits_{h(t)}^{\infty}J_2(x-y)v(t, x)dydx\right),& t>0,\\ g^{\prime}(t) = -\mu\left( \displaystyle\int\limits_{g(t)}^{h(t)}\displaystyle\int\limits_{-\infty}^{g(t)}J_1(x-y)u(t, x)dydx+\rho\displaystyle\int\limits_{g(t)}^{h(t)}\displaystyle\int\limits_{-\infty}^{g(t)}J_2(x-y)v(t,x)dydx\right),& t>0,\\ -g(0) = h(0) = h_0,\,\, u(0, x) = u_0(x),\,\,v(0, x) = v_0(x),& x\in \left[-h_0, h_0\right], \end{array}\right. 
\end{align}
where $x = g(t)$ and $x = h(t)$ are the moving boundaries to be determined together with $u(t, x)$ and $v(t, x)$, which are always assumed to be {{identical}} $0$ for $x \in \R\setminus\left[g(t), h(t)\right]$; $a, b, d_1, d_2, \mu, \rho$ given real positive constants. The main tool in the work \cite{NV1} is the eigentheory for linear cooperative system associated with homogeneous matrix of zero order coefficient. However, in reality, the zero order coefficients $a, b$ and the functions $H,G$ should depend on  spatial variables. Thus, it is much more resonable to consider the system (\ref{nonlocalfree}) when zero order coefficients depend also on variable $x$. 

The existence, simplicity and qualitative properties of the principal eigenvalue for the cooperative system has been  the subject of intensive interest since a long time ago, the earliest work on this topic may be due to  Hess \cite{Hess}. Then, Cantrell et al. \cite{C1, CS, CC} used  several parameter bifurcation theory to investigate several deep qualitative and quantitative information about the generalised spectrum for second-order elliptic systems. Recently, in the interesting works \cite{CLG, LL, baihe}, Caudevilla and L\'opez-G\'omez, Lam and Lou, also Bai and He  obtained the asymptotic behaviors of the principal eigenvalue with respect to the parameters associated with the diffusion or zero order coefficients, which imply many plentiful phenomena in the population dynamics. In this paper, motivated from the mentioned works,  our main goal is to investigate the asymptotic properties  of the principal eigenvalue $\lambda_p$ with respect to the dispersal rate $d_i$, $i\in\overline{1,N}$ and dispersal range $\sigma$ of the nonlocal Dirichlet-type system:
\begin{align}\label{eigenproblem1}
	\left\{\begin{array}{ll}
	\dfrac{d_1}{\sigma^m}\left[\displaystyle\int_\Omega J_{1,\sigma}(x-y) \varphi_1(y)dy-\varphi_1\right]+ a_{11}(x)\varphi_1(x)+\cdots + a_{1n}(x)\varphi_N(x)+\lambda_p\varphi_1(x)&=0 \\
	\vdots \hskip 3cm\vdots \hskip 3cm\vdots &\vdots  \,\\
	 \dfrac{d_N}{\sigma^m}\left[\displaystyle\int_\Omega J_{N,\sigma}(x-y) \varphi_N(y)dy-\varphi_N\right]+ a_{N1}(x)\varphi_1(x)+\cdots+a_{NN}(x)\varphi_N(x)+\lambda_p\varphi_N(x)&=0
	\end{array}x \in \Omega,\right.
	\end{align}
where $\Omega\subset\R^n$ is a bounded domain, $J_{\sigma, i}(x):=\frac{1}{\sigma^n} J_i(\frac{x}{\sigma})$, $1\leq i\leq N$ with $N$ is a number of equations. 

For the sake of presentation, we first define the function spaces that are used throughout the paper:

\begin{eqnarray}
\mathbf E=\mathbf{E}(\Omega)& =& L^2\left(\Omega\right)\times L^2\left(\Omega\right)\times \cdots\times L^2\left(\Omega\right)\nonumber ;\\
\mathbf{E}^\infty(\Omega) &=& L^\infty\left(\Omega\right)\times L^\infty\left(\Omega\right)\times \cdots\times L^\infty\left(\Omega\right)\nonumber ;\\
\mathbf C:=\mathbf{C}(\Omega)&=&\mathcal{C}\left(\Omega\right)\times \mathcal{C}\left(\Omega\right)\times\cdots\times \mathcal{C}\left(\Omega\right);\nonumber\\
\mathbf{C}_c(\Omega)&=&\mathcal{C}_c\left(\Omega\right)\times \mathcal{C}_c\left(\Omega\right)\times\cdots\times \mathcal{C}_c\left(\Omega\right)\nonumber ;\\
\mathbf{E}^+&=&\left\{\pmb{\varphi}=(\varphi_1,\varphi_2,\ldots, \varphi_N)^T\in \mathbf{E}\; \text{ such that } \varphi_j\geq 0, 1\leq j\leq N\right\}\nonumber ;\\
\mathbf{E}^{++}&=&\left\{\pmb{\varphi}=(\varphi_1,\varphi_2,\ldots, \varphi_N)^T\in \mathbf{E}\; \text{ such that } \varphi_j> 0, 1\leq j\leq N \right\},\nonumber
\end{eqnarray}
and we denote by $u{\bf \geq} v$ if $u=(u_1,u_2,...,u_N)$, $v=(v_1,v_2,...,v_N)$ and $u_j\geq v_j$ for $1\leq j\leq N$, $I$ the unit matrix, and $\left(\mathcal{C}(\Omega)\right)^{N\times N}$ the set of continuous $N\times N$ matrix-valued function on $\Omega$.  Note that $\mathbf{E}$ is Hilbert space with inner product
	\begin{align*}
	\left \langle {\varphi} ,{\phi} \right \rangle=\displaystyle\int_\Omega\varphi_1(x)\phi_1(x)dx+\cdots+\displaystyle\int_\Omega\varphi_N(x)\phi_N(x)dx,
	\end{align*}
	where ${\varphi}=\left(\varphi_1,\ldots,\varphi_N\right)^T,$  ${\phi}=\left(\phi_1,\ldots,\phi_N\right)^T\in \mathbf{E}$. We also denote by $\bm{\mathcal A}\colon \mathbf E\to \mathbf C$ given by $(\bm{\mathcal A}\varphi)(x)=A(x)\varphi(x)$ and  $\left(\bm{\mathcal I}\varphi\right)(x)=I\varphi(x)=\varphi(x)$.

 Throughout the paper, we always assume, if no other specifically mentioned, that the matrix $A(x)=(a_{ij}(x))\in (\mathcal{C}(\overline{\Omega}))^{N\times N}$ satisfies $a_{ij}(x)=a_{ji}(x)>0$ for any $1\leq i, j\leq N$ and for all $x\in \overline{\Omega}$. The dispersal kernel functions $J_1, \ldots, J_N\colon \mathbb R^n\to \mathbb R\ (N=1,2,\ldots)$  satisfy the following assumption:
\begin{align*}
	\begin{array}{lll}
		({\bf J})\; J_i\in C\left(\mathbb R^n\right)\bigcap L^{\infty}\left(\mathbb R^n\right)\; \text{is nonnegative, symmetric}\; \text{such that}\;\; J_i(0) > 0, \; \displaystyle\int_{\mathbb R^n} J_i(x)dx = 1,\; i=1,\ldots, N.
	\end{array}
\end{align*}

We define $\bm{\mathcal N}\colon \mathbf{E}\rightarrow \mathbf{C}$ given by
	\begin{align*}
	\left(\bm{\mathcal N}\varphi\right)(x)=\mathrm{diag}\left({\mathcal N}_1[\varphi_1](x),\ldots,\; {\mathcal N}_N[\varphi_N](x)\right),
	\end{align*}
	where ${\mathcal N}_i[\varphi_i](x): = 
	\displaystyle\int_\Omega J_i(x-y)\varphi_i(y)dy, \;\text{for}\; i=1,\ldots, N$. In what follows, $\bm{\mathcal N}_\delta$ denotes the $\bm{\mathcal N}$ associated to $J_{\sigma,i}$, $1\leq i\leq N$ and $\Omega_\delta$, where $J_{\sigma, i}(x-y):=\frac{1}{\sigma^N} J_i(\frac{x-y}{\sigma})$ and $\O_\sigma:=\frac{1}{\sigma}\O$. Then the eigenvalue problem we consider is the real number $\lambda$ and a positive real eigenfunction that satisfy the system: 
\begin{align}\label{PEV}
\bm{\mathcal K}{\varphi}+\lambda {\varphi}=0,
\end{align}
where $\bm{\mathcal K}:\mathbf{E}\rightarrow \mathbf{E}$ defined by $\bm{\mathcal K}=\bm{\mathcal D}\bm{\mathcal N}-\bm{\mathcal D}+\bm{\mathcal A}$. If $\bm{\mathcal D}=(d_1,d_2,\ldots, d_N)$, $d_i>0$ for $i=1...N,$ then
the eigenvalue problem can be explicitly rewritten by

\begin{align}\label{eigenproblem}
	\left\{\begin{array}{ll}
	d_1\displaystyle\int_\Omega J_1(x-y) \varphi_1(y)dy-d_1\varphi_1+ a_{11}(x)\varphi_1(x)+\cdots + a_{1n}(x)\varphi_N(x)+\lambda\varphi_1(x)&=0 \\
	\vdots \hskip 3cm\vdots \hskip 3cm\vdots &\vdots  \,\\
	 d_N\displaystyle\int_\Omega J_N(x-y) \varphi_N(y)dy-d_N\varphi_N+ a_{N1}(x)\varphi_1(x)+\cdots+a_{NN}(x)\varphi_N(x)+\lambda\varphi_N(x)&=0 
	\end{array}\right.x \in \Omega.
	\end{align}

It is noticed that, due to the symmetry of $A(x)$, all eigenvalue functions, say $\lambda_1(x), \ldots, \lambda_N(x)$, are real. Moreover, these eigenvalue functions can be chosen to depend continuously on $x\in \overline{\Omega}$ (cf. \cite[Theorem 6]{Lax07}). Therefore, the function
 $\bar\lambda(A(x)):=\max\{\lambda_1(x), \ldots, \lambda_N(x)\}$ is continuous on $\overline{\Omega}$ and thus it must attain the maximum at some point $x_0\in \overline{\Omega}$ and without loss of generality one may assume that $\bar\lambda(A(x_0))=\lambda_1(x_0)$. Moreover, { when $A=(a_{ij})\in (\mathbb R)^{N\times N}$ be a real-valued square matrix whose off-diagonal terms are non-negative, the Perron-Frobenious theorem (cf. Theorem \ref{Perron-Frobenious}) says that there exists real eigenvalue $\bar\lambda(A)$ corresponding to a non-negative eigenvector, with the greatest real part (for any eigenvalue $\lambda'\ne \bar\lambda(A)$, $\mathrm{Re}(\lambda')<\bar\lambda(A))$. In other words, in this situation $\bar\lambda(A(x))$ is a constant function. In general, for a symmetric matrix-valued function $A(x)$ defined on $\overline{\Omega}$ we need the following hypothesis.}
 
 \textbf{Hypothesis $(\mathbf P)$}: We say that a symmetric matrix-valued function $A(x)$ defined on $\overline{\Omega}$ satisfies the Hypothesis $(\mathbf P)$ if there exist a point $x_0\in \overline{\Omega}$, $\delta>0$, and a continuous eigenpair function denoted by $\left(\lambda_1(x), e(x)\right)$ defined on $$\Omega_\delta:=\{x\in \Omega\colon \|x-x_0\|<\delta\} $$ satisfying the following conditions:
 \begin{itemize}
 \item[($\mathbf P1$)]  $\displaystyle \sigma:= \bar\lambda(A(x_0))=\lambda_1(x_0)=\sup_{x\in \overline{\Omega})}\bar\lambda(A(x))$ and $\|e(x)\|=1$ for all $x\in \Omega_\delta$.
 \item[($\mathbf P2$)] $\displaystyle \int_{\Omega_\delta}\frac{dx}{\sigma-\lambda_1(x)}=+\infty$. 
 \end{itemize}
 The Hypothesis $(\mathbf P)$ holds if $A(x)$ satisfies at least one of the following settings:
	\begin{itemize}
	\item[(a)] $A(x)$ is constant. { (See Theorem \ref{Perron-Frobenious} below}.)
	\item[(b)] $N=1$ and $A(x)$ is Lipschitz continuous on $\overline{\Omega}$ (see \cite{HMMV03}) or more generally $A(x)$ is continuous $\overline{\Omega}$ and  $\displaystyle \frac{1}{\sigma-A(x)}\not \in L^1(\overline{\Omega})$ (see \cite{Co10}).
	\item[(c)] $n=1$ and $A(x)$ is differentiable on $ \overline{\Omega}$ and has only simple eigenvalue everywhere in $\overline{\Omega}$ (cf. \cite[Theorem 8]{Lax07}).
	\item[(d)] $n=1$ and $A(x)$ is real-analytic on $ \overline{\Omega}$ (see \cite[Theorem (A)]{KM03}).
	\item[(e)] $n=1$ and $A(x)$ is $\mathcal{C}^\infty$-smooth on $ \overline{\Omega}$ and no two unequal continuously parameterized eigenvalues meet of infinite order at any $x\in \overline{\Omega}$ (see \cite[Theorem (B)]{KM03}).
	\end{itemize}
	{ We note that for any non-negative constant matrix $A=A(x)$, the existence of the principal eigenvalue was guaranteed by the Perron-Frobenious theorem (cf. Theorem \ref{Perron-Frobenious}).

Let us define the principal spectrum point $\lambda_1(\bm{\mathcal K})$ by
 $$
 \lambda_1(\bm{\mathcal K}):=\sup\{\mathrm{Re}(\lambda) \colon \lambda\in \sigma(-\bm{\mathcal K})\},
 $$
 where $\sigma(-\bm{\mathcal K})$ is the spectrum of $-\bm{\mathcal K}$. If  $\lambda_1(\bm{\mathcal K})$ is an isolated eigenvalue of $-\bm{\mathcal K}$ with eigenfuction in $\mathbf E^{+}\cap \mathbf C(\Omega)$, then it is called the \emph{principal eigenvalue} of $-\bm{\mathcal K}$.
 Now, to state our main result, let us define the generalized principal eigenvalues  $\lambda_p(\bm{\mathcal K})$, $\lambda_v(\bm{\mathcal K})$ and $\lambda_p'(\bm{\mathcal K})$ as follows:
 \begin{equation} \label{eq7-8-1}
 	\begin{split}
	 \lambda_p(\bm{\mathcal K})&=\sup\{\lambda \in \mathbb R\colon \exists \varphi \in \mathbf{C}(\overline{\Omega}), \varphi>0, \bm{\mathcal K}[\varphi](x)+\lambda \varphi(x)\leq 0 \text{ in }\Omega\},\\
 			\lambda_v(\bm{\mathcal K})&=\inf_{\varphi\in \mathbf{E}(\Omega),\varphi\not \equiv 0} -\dfrac{\langle\bm{\mathcal K}[\varphi],\varphi\rangle}{\|\varphi \|^2_{\mathbf{E}(\Omega)}},\\
 			\lambda_p'(\bm{\mathcal K})&=\inf\{\lambda \in \mathbb R\colon \exists \varphi \in \mathbf{C}(\overline{\Omega})\cap \mathbf{E}^\infty(\Omega), 0\not \equiv\varphi\geq 0, \bm{\mathcal K}[\varphi](x)+\lambda \varphi\geq 0\text{ in }\Omega\}.
 			%\lambda_p''(\bm{\mathcal K})&=\inf\{\lambda \in \mathbb R\colon \exists \varphi \in \mathbf{C}_c(\Omega), 0\not \equiv\varphi\geq 0, \bm{\mathcal K}[\varphi](x)+\lambda \varphi\geq 0\text{ in }\Omega\}.
 	\end{split}
 \end{equation}

In this paper, our main results read as follows:
\begin{theorem}[\textbf{Criterion for existence of the eigenvalue}]\label{existence-eigen}
Assume that $A(x)=(a_{ij}(x))\in (\mathcal{C}(\overline{\Omega}))^{N\times N}$ is a symmetric continuous matrix-valued function, defined on $\overline{\Omega}$, whose off-diagonal terms of $A(x)$ are positive and the conditions $(\mathbf J)$ and $(\mathbf P)$ hold. Then $\bm{\mathcal K}:=\bm{\mathcal N} +\bm{\mathcal A}$ is self-adjiont and has a principal eigenvalue $\lambda_p(\bm{\mathcal K})$ given by
\begin{align}\label{3.02}
\lambda_1(\bm{\mathcal K})= -\sup\limits_{\left\|{\varphi}\right\|_{\mathbf{E}} =1}\left \langle \bm{\mathcal K}{\varphi}, {\varphi} \right \rangle=\lambda_v(\bm{\mathcal K}).
\end{align}
Moreover, the maximum being attained for a strictly a positive, continuous eigenfunction, say $\phi\in \mathbf C \cap \mathbf E^{++}$, that is unique and  associated to the principal eigenvalue $\lambda_1(\bm{\mathcal K})$. Also $\sigma(\bm{\mathcal K})\subset \left(-\infty, -\lambda_1(\bm{\mathcal K})\right]$.
\end{theorem}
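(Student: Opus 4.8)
\textit{Proof proposal.} The plan is to reduce the statement to three classical facts — the spectral theory of bounded self-adjoint operators, Weyl's theorem on stability of the essential spectrum, and a Krein--Rutman/Perron--Frobenius type positivity argument — with Hypothesis $(\mathbf P)$ used at exactly one point: to push the top of the spectrum strictly above the essential spectrum. First I would record the structural facts. Each $\mathcal N_i$ has kernel $J_i(x-y)\in L^\infty(\mathbb R^n)$ and $\Omega$ is bounded, so $\mathcal N_i$ is Hilbert--Schmidt and $\bm{\mathcal N}$ is compact on $\mathbf E$; symmetry of the $J_i$ makes it self-adjoint, and continuity and symmetry of $A(x)$ make $\bm{\mathcal A}$ a bounded self-adjoint multiplication operator. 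Hence $\bm{\mathcal K}=\bm{\mathcal N}+\bm{\mathcal A}$ is bounded and self-adjoint, $\sigma(\bm{\mathcal K})\subset\mathbb R$, and the variational identity for bounded self-adjoint operators gives
\begin{equation*}
\Lambda:=\sup\sigma(\bm{\mathcal K})=\sup_{\|\varphi\|_{\mathbf E}=1}\langle\bm{\mathcal K}\varphi,\varphi\rangle .
\end{equation*}
This already yields $(\ref{3.02})$ (the second equality being just the definition of $\lambda_v$), and since the principal spectrum point coincides with $-\Lambda$ it gives the last assertion $\sigma(\bm{\mathcal K})\subset(-\infty,-\lambda_1(\bm{\mathcal K})]$. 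Everything else amounts to showing that $\Lambda$ is attained by a unique, strictly positive, continuous eigenfunction.

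Next I would locate the essential spectrum. For $\mu>\sigma$ one has $\mu I-A(x)\succeq(\mu-\sigma)I\succ0$ uniformly in $x$, so $\mu\bm{\mathcal I}-\bm{\mathcal A}$ is boundedly invertible and $\mu\bm{\mathcal I}-\bm{\mathcal K}=(\mu\bm{\mathcal I}-\bm{\mathcal A})\bigl(\bm{\mathcal I}-(\mu\bm{\mathcal I}-\bm{\mathcal A})^{-1}\bm{\mathcal N}\bigr)$ is a compact perturbation of an isomorphism, hence Fredholm of index zero; therefore $\sigma(\bm{\mathcal K})\cap(\sigma,\infty)$ consists only of isolated eigenvalues of finite multiplicity and $\sup\sigma_{\mathrm{ess}}(\bm{\mathcal K})=\sigma$. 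Consequently, \emph{once the strict inequality $\Lambda>\sigma$ is established}, $\Lambda$ is an isolated eigenvalue and the supremum in $(\ref{3.02})$ is attained; moreover $\Lambda I-A(x)$ is then invertible for every $x\in\overline\Omega$, and rewriting any eigenfunction $\phi\in\mathbf E$ for $\Lambda$ as $\phi(x)=(\Lambda I-A(x))^{-1}(\bm{\mathcal N}\phi)(x)$ shows $\phi\in\mathbf C$, because $\bm{\mathcal N}\phi$ is continuous (dominated convergence, $J_i$ bounded continuous, $\phi_i\in L^1$) and $x\mapsto(\Lambda I-A(x))^{-1}$ is continuous.

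The strict inequality $\Lambda>\sigma$ is the heart of the proof and the only place Hypothesis $(\mathbf P)$ is used. Using $(\mathbf P2)$ together with a finite covering of the compact zero set of $\sigma-\lambda_1$ in $\overline{\Omega_\delta}$ (on the complement of which $\sigma-\lambda_1$ is bounded below, hence $(\sigma-\lambda_1)^{-1}$ is integrable), I would select a ball $B\subset\Omega_\delta$ of diameter so small that $J_i\ge c_0>0$ on $B-B$ for all $i$ and with $\int_B(\sigma-\lambda_1(x))^{-1}\,dx=+\infty$. On $B$ the continuous eigenpair $(\lambda_1(x),e(x))$ from $(\mathbf P)$ is available, and by Perron--Frobenius (off-diagonal entries of $A(x)$ positive) $e(x)$ may be chosen with strictly positive components, so $c_1:=\min_{\overline B}\min_i e_i(x)>0$. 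For $\varepsilon>0$ put $\varphi_\varepsilon:=\psi_\varepsilon\,e$ with $\psi_\varepsilon:=\mathbf 1_B/(\sigma-\lambda_1+\varepsilon)\in L^2(\Omega)$; using $A(x)e(x)=\lambda_1(x)e(x)$, $\|e(x)\|=1$, $J_i\ge c_0$ on $B-B$ and positivity of $e$ one gets
\begin{equation*}
\langle\bm{\mathcal K}\varphi_\varepsilon,\varphi_\varepsilon\rangle-\sigma\|\varphi_\varepsilon\|_{\mathbf E}^2
=\langle\bm{\mathcal N}\varphi_\varepsilon,\varphi_\varepsilon\rangle-\int_B(\sigma-\lambda_1)\psi_\varepsilon^2
\ \ge\ c_0c_1^2\Bigl(\int_B\psi_\varepsilon\Bigr)^2-\int_B\psi_\varepsilon ,
\end{equation*}
where the last bound uses $(\sigma-\lambda_1)\psi_\varepsilon^2\le\psi_\varepsilon$. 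By $(\mathbf P2)$ and monotone convergence $\int_B\psi_\varepsilon\to+\infty$ as $\varepsilon\to0$, so the right-hand side is positive for small $\varepsilon$, whence $\Lambda\ge\langle\bm{\mathcal K}\varphi_\varepsilon,\varphi_\varepsilon\rangle/\|\varphi_\varepsilon\|_{\mathbf E}^2>\sigma$.

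Finally I would establish positivity, continuity and simplicity of a maximizer $\phi$ (with $\|\phi\|_{\mathbf E}=1$, $\bm{\mathcal K}\phi=\Lambda\phi$), which lies in $\mathbf C$ by the second step. For $C$ large, $A(x)+CI$ has all entries positive and $\bm{\mathcal N}$ has nonnegative kernel, so $\bm{\mathcal K}+C\bm{\mathcal I}$ maps $\mathbf E^+$ into itself; hence $\langle(\bm{\mathcal K}+C\bm{\mathcal I})|\phi|,|\phi|\rangle\ge\langle(\bm{\mathcal K}+C\bm{\mathcal I})\phi,\phi\rangle=\Lambda+C=\sup\sigma(\bm{\mathcal K}+C\bm{\mathcal I})$, so $|\phi|$ is also a maximizer, hence an eigenfunction for $\Lambda$, and we may take $\phi\ge0$, $\phi\not\equiv0$. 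If $\phi_{i_0}(x_1)=0$ at some point, reading off $\Lambda\phi_{i_0}(x_1)=(\bm{\mathcal N}\phi)_{i_0}(x_1)+\sum_j a_{i_0j}(x_1)\phi_j(x_1)$ with all $a_{i_0j}(x_1)>0$ and $J_{i_0}>0$ near $0$ forces $\phi\equiv0$ on a neighbourhood of $x_1$; since $\{\phi=0\}$ is then open and closed in the connected set $\overline\Omega$ while $\phi\not\equiv0$, we conclude $\phi\in\mathbf C\cap\mathbf E^{++}$ with all components strictly positive. Simplicity follows by the standard sliding argument: for another positive eigenfunction $\tilde\phi$, $t^*:=\sup\{t>0:\tilde\phi-t\phi\ge0\text{ on }\overline\Omega\}$ is finite and positive, $\tilde\phi-t^*\phi\ge0$ is an eigenfunction, and strong positivity forces it to vanish, so $\tilde\phi=t^*\phi$; a short orthogonality argument upgrades this to $\dim\ker(\Lambda\bm{\mathcal I}-\bm{\mathcal K})=1$, and $\mu\langle\psi,\phi\rangle=\langle\bm{\mathcal K}\psi,\phi\rangle=\Lambda\langle\psi,\phi\rangle$ with $\langle\psi,\phi\rangle>0$ shows $\Lambda$ is the only eigenvalue with a positive eigenfunction, completing the proof. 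I expect the main obstacle to be precisely the strict inequality $\Lambda>\sigma$: it is the only step where the nonlocal, spatially inhomogeneous and system structures genuinely interact, and it forces one to have a \emph{continuous} eigenpair $(\lambda_1(x),e(x))$ near the maximum of $\bar\lambda(A(\cdot))$ — which is exactly why Hypothesis $(\mathbf P)$ and its sufficient conditions (a)--(e), resting on \cite{Ra13}, \cite{KM03} and \cite{Lax07}, are needed; a secondary subtlety is that $\bm{\mathcal K}$ is not compact, which is circumvented by working above the essential spectrum and applying the positivity-preserving shift $\bm{\mathcal K}+C\bm{\mathcal I}$ directly to the variational characterization rather than to a resolvent.
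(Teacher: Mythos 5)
Your proposal is correct and follows essentially the same route as the paper: the crux in both is the strict inequality $\Lambda>\sigma$ obtained from the test function $(\sigma-\lambda_1(x)+\varepsilon)^{-1}e(x)$ supported near the maximizer and driven by $(\mathbf P2)$, followed by attainment of the supremum through compactness of $\bm{\mathcal N}$ together with invertibility of $\Lambda I-A(x)$ (which you package via Weyl/Fredholm theory on the essential spectrum, while the paper runs a maximizing sequence through $\widetilde{\bm{\mathcal{H}}}^{-1}\bm{\mathcal N}$ — the same mechanism), and then the $|\phi|$-trick, the pointwise equation at a zero, and the sliding argument for positivity and simplicity. The one point to tighten is your appeal to Perron--Frobenius for strict positivity of \emph{all} components of $e$ on the whole ball $B$ — that is only guaranteed near points where $\lambda_1(x)=\bar\lambda(A(x))$ — but, exactly as in the paper's proof, a single component of $e$ bounded away from zero on a possibly smaller ball (available since $\|e(x)\|=1$) already suffices for the quadratic lower bound on $\langle\bm{\mathcal N}\varphi_\varepsilon,\varphi_\varepsilon\rangle$.
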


Although, here we only provide a sufficient condition to guarantee the existence of principal eigenvalue for the nonlocal cooperative system, but in Section 2, we can show that this condition is sharp by giving a counterexample for existence of the principal eigenvalue if the Hypothesis (P) is violated.

By  Theorem \ref{existence-eigen}, we obtain the following corollary.
 \begin{corollary}\label{cor-existence-eigen}
Assume that $A(x)=(a_{ij}(x))\in (\mathcal{C}^1([a,b]))^{N\times N}$ is a symmetric differentiable matrix-valued function, defined on $[a,b]$, whose off-diagonal terms of $A(x)$ are positive and the condition $(\mathbf J)$ holds. Then $\bm{\mathcal K}:=\bm{\mathcal N} +\bm{\mathcal A}$ is self-adjiont and has a principal eigenvalue $\lambda_p(\bm{\mathcal K})$ given by
\begin{align}\label{3.02}
\lambda_1(\bm{\mathcal K})= -\sup\limits_{\left\|{\varphi}\right\|_{\mathbf{E}} =1}\left \langle \bm{\mathcal K}{\varphi}, {\varphi} \right \rangle=\lambda_v(\bm{\mathcal K}).
\end{align}
Moreover, the maximum being attained for a strictly a positive, continuous eigenfunction, say $\phi\in \mathbf C \cap\mathbf E^{++}$, that is unique and  associated to the principal eigenvalue $\lambda_1(\bm{\mathcal K})$. Also $\sigma(\bm{\mathcal K})\subset \left(-\infty, -\lambda_1(\bm{\mathcal K})\right]$.
\end{corollary}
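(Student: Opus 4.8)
The plan is to deduce Corollary~\ref{cor-existence-eigen} from Theorem~\ref{existence-eigen}: under the present hypotheses $A$ is already continuous and symmetric on $[a,b]$ with strictly positive off-diagonal entries, and $(\mathbf J)$ is assumed, so the only thing left to verify is that a $\mathcal{C}^1$ symmetric matrix-valued function on the interval $[a,b]$ (here the ambient dimension is $n=1$) automatically satisfies Hypothesis $(\mathbf P)$. Once that is in place, Theorem~\ref{existence-eigen} hands back, verbatim, the self-adjointness of $\bm{\mathcal K}=\bm{\mathcal N}+\bm{\mathcal A}$, the variational identity for $\lambda_1(\bm{\mathcal K})$, the existence, positivity and uniqueness of the principal eigenfunction, and the spectral inclusion $\sigma(\bm{\mathcal K})\subset(-\infty,-\lambda_1(\bm{\mathcal K})]$.

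The first step — and, I expect, the only one requiring any thought — is to observe that the leading eigenvalue of $A(x)$ is \emph{simple} at every $x\in[a,b]$. Fixing a constant $c>0$ large enough that $A(x)+cI$ has all entries strictly positive for every $x\in[a,b]$, the Perron--Frobenius theorem (Theorem~\ref{Perron-Frobenious}) gives that its spectral radius is a simple, isolated eigenvalue with a positive eigenvector; subtracting $c$, the top eigenvalue $\bar\lambda(A(x))=\max\{\lambda_1(x),\dots,\lambda_N(x)\}$ is a simple, isolated eigenvalue of $A(x)$ for each fixed $x$. This is precisely where the cooperativity-type assumption (strictly positive off-diagonal entries) is used, and it is what excludes the pathological $\mathcal{C}^1$ families of symmetric matrices whose top eigenspace rotates without admitting a continuous unit selection.

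With simplicity at hand, I would invoke standard analytic perturbation theory. Since $A\in(\mathcal{C}^1([a,b]))^{N\times N}$ and $\bar\lambda(A(x))$ is a simple root of $\lambda\mapsto\det(A(x)-\lambda I)$, the implicit function theorem produces $\lambda_1\in\mathcal{C}^1([a,b])$ with $\lambda_1(x)=\bar\lambda(A(x))$ (cf. \cite[Theorem 8]{Lax07}); moreover the rank-one spectral projector $P(x)=\frac{1}{2\pi i}\oint_{\Gamma_x}(zI-A(x))^{-1}\,dz$, with $\Gamma_x$ a small circle enclosing only $\lambda_1(x)$, depends $\mathcal{C}^1$-smoothly on $x$, so normalizing $P(x)v_0$ for a fixed unit eigenvector $v_0$ at a base point yields a continuous (in fact $\mathcal{C}^1$) unit eigenvector field $e(x)$ with $A(x)e(x)=\lambda_1(x)e(x)$. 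Let $x_0\in[a,b]$ be a point at which the continuous function $\bar\lambda(A(\cdot))$ attains its maximum $\sigma$, and take any $\delta>0$ with $\Omega_\delta=\{x\in[a,b]:|x-x_0|<\delta\}$; then $(\mathbf P1)$ holds by construction.

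Finally, for $(\mathbf P2)$ I would use one-dimensionality together with $\mathcal{C}^1$-regularity: by the mean value theorem $0\le \sigma-\lambda_1(x)\le L|x-x_0|$ with $L=\max_{[a,b]}|\lambda_1'|$, hence $\frac{1}{\sigma-\lambda_1(x)}\ge \frac{1}{L|x-x_0|}$ (the case $L=0$ being immediate, since then the integrand is everywhere $+\infty$), and $\int_{\Omega_\delta}\frac{dx}{|x-x_0|}=+\infty$ because $n=1$ — this remains valid when $x_0$ is an endpoint of $[a,b]$, $\Omega_\delta$ then being a one-sided interval. Thus Hypothesis $(\mathbf P)$ is verified and Theorem~\ref{existence-eigen} applies. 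I anticipate no difficulty in this last paragraph or in the perturbation-theoretic construction of $(\lambda_1,e)$; the genuine content of the argument is the simplicity step powered by Perron--Frobenius.
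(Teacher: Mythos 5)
Your proof is correct and is essentially the argument the paper intends: the corollary is stated with no proof beyond ``By Theorem \ref{existence-eigen}'', leaving the verification of Hypothesis $(\mathbf P)$ to the paper's list of sufficient settings. Your argument supplies exactly the missing bridge: the paper's item (c) ($n=1$, differentiable, \emph{all} eigenvalues simple) does not literally apply under the corollary's hypotheses, but, as you observe, only the top eigenvalue enters Hypothesis $(\mathbf P)$, and its simplicity is forced by Perron--Frobenius (Theorem \ref{Perron-Frobenious}) thanks to the strictly positive off-diagonal entries; the $\mathcal{C}^1$ eigenpair near the maximizer $x_0$ then comes from standard perturbation theory for a simple eigenvalue, and $(\mathbf P2)$ from the one-dimensional mean value estimate $0\le\sigma-\lambda_1(x)\le L|x-x_0|$. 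This is correct and, if anything, more complete than what the paper records.
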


\vskip 1cm

\begin{theorem}[\textbf{Asymptotic behaviors with respect to dispersal rate}]\label{pro.3.6}
	Assume that the dispersal kernel $J_i$, $i=1,\ldots, N$ satisfy $\left({\bf J}\right)$, $A(x) = (a_{ij}(x))\in (\mathcal{C}(\overline{\Omega}))^{N\times N}$ be a symmetric matrix-valued function on $\overline{\Omega}$ with $a_{ij}(x)=a_{ji}(x)>0$ whenever $i\ne j$. Then the statements below about $\lambda_p\left(\bm{\mathcal D}\right)$ hold.
	\begin{enumerate}
		\item $\lambda_p\left(\bm{\mathcal D}\right)$ is a monotone increasing function in $\bm{\mathcal D}$ in the sense that $\bm{\mathcal D}\geq \bm{\mathcal D}'$ if $d_i\geq d_i'$ for $1\leq i\leq N$;\label{1}	
		\item $\lim\limits_{\bm{\mathcal D}\to \infty}\lambda_p\left(\bm{\mathcal D}\right)= +\infty$, where $\bm{\mathcal D}\to \infty$ means that $d=\min\{d_1,\ldots, d_N\}\to +\infty$;\label{2}
		\item $\lim\limits_{\bm{\mathcal D}\to 0}\lambda_p\left(\bm{\mathcal D}\right) =- \sup_{x\in \Omega}\bar\lambda(A(x))$\label{3}.
		% with the greatest real part $\left( \text{for any eigenvalue}\; \lambda^{\prime}\neq \overline{\lambda}_i\left(\bm{\mathcal B}\right), \overline{\lambda}_i\left(\bm{\mathcal B}\right)>Re(\lambda^{\prime})\right)$ .\label{3}
	\end{enumerate} 
\end{theorem}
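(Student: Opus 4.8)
The plan is to derive all three statements from the variational description of the generalised principal eigenvalue. For a diagonal matrix $\bm{\mathcal D}=\mathrm{diag}(d_1,\dots,d_N)$ with $d_i>0$ I write $\bm{\mathcal K}_{\bm{\mathcal D}}:=\bm{\mathcal D}\bm{\mathcal N}-\bm{\mathcal D}+\bm{\mathcal A}$, a bounded self-adjoint operator on $\mathbf E$ (self-adjointness follows from the symmetry of each $J_i$ and of $A(x)$), so that, as in Theorem~\ref{existence-eigen},
\[
\lambda_p(\bm{\mathcal D})=\lambda_v(\bm{\mathcal D})=\inf_{\varphi\in\mathbf E,\ \|\varphi\|_{\mathbf E}=1}\bigl(-\langle\bm{\mathcal K}_{\bm{\mathcal D}}\varphi,\varphi\rangle\bigr).
\]
The computational input I would use is the elementary Dirichlet-form identity: for $\psi\in L^2(\Omega)$, expanding the square and using the symmetry of $J_i$,
\[
\mathfrak q_i(\psi):=\|\psi\|_{L^2(\Omega)}^2-\langle\mathcal N_i\psi,\psi\rangle_{L^2(\Omega)}=\tfrac12\int_\Omega\!\int_\Omega J_i(x-y)\bigl(\psi(x)-\psi(y)\bigr)^2\,dx\,dy+\int_\Omega\psi(x)^2\Bigl(1-\int_\Omega J_i(x-y)\,dy\Bigr)dx\ \geq\ 0,
\]
from which, with $\langle A\varphi,\varphi\rangle=\int_\Omega A(x)\varphi(x)\cdot\varphi(x)\,dx$,
\[
-\langle\bm{\mathcal K}_{\bm{\mathcal D}}\varphi,\varphi\rangle=\sum_{i=1}^{N}d_i\,\mathfrak q_i(\varphi_i)-\langle A\varphi,\varphi\rangle .
\]

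Part \eqref{1} would then be immediate: since each $\mathfrak q_i(\varphi_i)\geq0$, raising any $d_i$ raises $-\langle\bm{\mathcal K}_{\bm{\mathcal D}}\varphi,\varphi\rangle$ for every fixed $\varphi$, hence raises the infimum, so $d_i\geq d_i'$ for all $i$ gives $\lambda_p(\bm{\mathcal D})\geq\lambda_p(\bm{\mathcal D}')$. For part \eqref{3} I would set $\Lambda:=\sup_{x\in\overline\Omega}\bar\lambda(A(x))$; by continuity of $x\mapsto\bar\lambda(A(x))$ and density this equals $\sup_{x\in\Omega}\bar\lambda(A(x))$, and since $A(x)$ is symmetric one has $A(x)\xi\cdot\xi\leq\bar\lambda(A(x))|\xi|^2\leq\Lambda|\xi|^2$, hence $\langle A\varphi,\varphi\rangle\leq\Lambda\|\varphi\|_{\mathbf E}^2$. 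Combined with $\sum_i d_i\mathfrak q_i(\varphi_i)\geq0$ this gives $-\langle\bm{\mathcal K}_{\bm{\mathcal D}}\varphi,\varphi\rangle\geq-\Lambda$ on the unit sphere of $\mathbf E$, for every $\bm{\mathcal D}$, so $\lambda_p(\bm{\mathcal D})\geq-\Lambda$ always. For the reverse inequality I would fix $\eta>0$, pick $x_\eta\in\Omega$ with $\bar\lambda(A(x_\eta))\geq\Lambda-\eta$, a unit top eigenvector $e_\eta$ of $A(x_\eta)$, and $\rho\in\mathcal C_c(\Omega)$ with $\|\rho\|_{L^2(\Omega)}=1$ supported so close to $x_\eta$ that $\int_\Omega A(x)e_\eta\cdot e_\eta\,\rho(x)^2\,dx\geq\Lambda-2\eta$; then, testing with $\varphi:=\rho\,e_\eta$ (so $\varphi_i=(e_\eta)_i\rho$, $\|\varphi\|_{\mathbf E}=1$) and using $0\leq\mathfrak q_i(\rho)\leq2$ and $\sum_i(e_\eta)_i^2=1$,
\[
\lambda_p(\bm{\mathcal D})\leq-\langle\bm{\mathcal K}_{\bm{\mathcal D}}\varphi,\varphi\rangle=\sum_{i=1}^{N}d_i\,(e_\eta)_i^2\,\mathfrak q_i(\rho)-\int_\Omega A(x)e_\eta\cdot e_\eta\,\rho^2\,dx\leq 2\max_i d_i-\Lambda+2\eta .
\]
Letting $\bm{\mathcal D}\to0$ (that is, $\max_i d_i\to0$) and then $\eta\to0$ yields $\limsup_{\bm{\mathcal D}\to0}\lambda_p(\bm{\mathcal D})\leq-\Lambda$, which together with the lower bound gives \eqref{3}.

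For part \eqref{2} I would first use monotonicity \eqref{1} to reduce to the case $d_1=\dots=d_N=:d$ (indeed $\lambda_p(\bm{\mathcal D})\geq\lambda_p(d,\dots,d)$ with $d=\min_i d_i$), so it is enough to prove $\lambda_p(d,\dots,d)\to+\infty$ as $d\to+\infty$. The one nontrivial ingredient is the \emph{strict} spectral gap $\mu_i:=1-\lambda_1(\mathcal N_i)>0$, where $\lambda_1(\mathcal N_i):=\sup_{\|\psi\|_{L^2(\Omega)}=1}\langle\mathcal N_i\psi,\psi\rangle$ is the top of the spectrum of the compact self-adjoint operator $\mathcal N_i$ on $L^2(\Omega)$ (Hilbert--Schmidt, since $J_i\in L^\infty$ and $\Omega$ is bounded): if $\lambda_1(\mathcal N_i)=1$, the supremum is attained at an eigenfunction $\psi\neq0$ with $\mathcal N_i\psi=\psi$, so $\mathfrak q_i(\psi)=0$; vanishing of the jump term forces $\psi$ to be a.e.\ constant (using $J_i(0)>0$, continuity of $J_i$, and connectedness of $\Omega$), and vanishing of the remaining term then forces $\int_{\mathbb R^n\setminus\Omega}J_i(x-y)\,dy=0$ for a.e.\ $x\in\Omega$, which is impossible near the points of $\Omega$ of largest norm because $\Omega$ is bounded and $J_i>0$ near the origin. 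With $\mu:=\min_i\mu_i>0$ one then has $\sum_i\mathfrak q_i(\varphi_i)\geq\mu\|\varphi\|_{\mathbf E}^2$, whence
\[
\lambda_p(d,\dots,d)=\inf_{\|\varphi\|_{\mathbf E}=1}\Bigl(d\sum_{i=1}^{N}\mathfrak q_i(\varphi_i)-\langle A\varphi,\varphi\rangle\Bigr)\geq d\mu-\Lambda\longrightarrow+\infty\qquad(d\to+\infty),
\]
which is \eqref{2}.

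The main obstacle will not be the bookkeeping above but two structural points. First, Theorem~\ref{pro.3.6} does \emph{not} assume Hypothesis~$(\mathbf P)$, so $\lambda_p(\bm{\mathcal D})$ has to be treated through its definition as a \emph{generalised} principal eigenvalue in \eqref{eq7-8-1} rather than as a bona fide eigenvalue; everything above rests on the identity $\lambda_p(\bm{\mathcal D})=\lambda_v(\bm{\mathcal D})$, which must be available for every self-adjoint $\bm{\mathcal K}_{\bm{\mathcal D}}$ regardless of $(\mathbf P)$ --- the role of $(\mathbf P)$ in Theorem~\ref{existence-eigen} being only to guarantee that the infimum is attained by a positive continuous eigenfunction. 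Second, assertion \eqref{2} is genuinely quantitative: it hinges on the strict inequality $\lambda_1(\mathcal N_i)<1$ on the bounded domain $\Omega$, whose proof uses both the connectedness of $\Omega$ and the fact that a positive fraction of the mass of $J_i(x-\cdot)$ escapes $\Omega$ when $x$ is near $\partial\Omega$; this is precisely where the Dirichlet (exterior-zero) character of the problem and the nondegeneracy $J_i(0)>0$ are used, and without them the limit in \eqref{2} would be finite.
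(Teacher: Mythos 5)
Your proposal is correct and, for parts (1) and (3), runs essentially parallel to the paper's proof: both reduce everything to the Rayleigh quotient $-\langle\bm{\mathcal K}_{\bm{\mathcal D}}\varphi,\varphi\rangle$, use the nonnegativity of the Dirichlet forms $\mathfrak q_i$ to get monotonicity and the lower bound $\lambda_p(\bm{\mathcal D})\ge-\sup_\Omega\bar\lambda(A(x))$, and use a concentration argument for the upper bound as $\bm{\mathcal D}\to0$ (your explicit test function $\rho\,e_\eta$ is in fact cleaner than the paper's appeal to the inequality $\lambda_p\le-\sup\bar\lambda(A(x)-D)$, since it needs only the top eigenvector of $A$ at a single point plus continuity of $A$, not a continuous eigenvector field). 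Part (2) is where you genuinely diverge: the paper constructs an explicit test pair $(\lambda_{\bm{\mathcal D}},\psi^0)$ from the positive principal eigenfunction $\psi^0$ of the scalar operator $L^0_\Omega$ together with the strict negativity $\lambda^0<0$ of its principal eigenvalue (quoted from the reference of Shen and Xie), which lower-bounds $\lambda_p(\bm{\mathcal D})$ \emph{directly from the definition} of $\lambda_p$ as a supremum over test pairs; you instead prove the equivalent spectral-gap statement $\|\mathcal N_i\|_{L^2\to L^2}<1$ from scratch (a sound, self-contained argument using $J_i(0)>0$, connectedness of $\Omega$, and the mass of $J_i(x-\cdot)$ escaping $\Omega$) and then lower-bound $\lambda_v$. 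What the paper's route buys is that part (2) bypasses the identification $\lambda_p=\lambda_v$; what yours buys is independence from the external reference.

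The one point you correctly flag but do not resolve is that Theorem \ref{pro.3.6} does not assume Hypothesis $(\mathbf P)$, whereas the identity $\lambda_p(\bm{\mathcal D})=\lambda_v(\bm{\mathcal D})$ is established in the paper (Theorem \ref{existence-eigen} and Lemma \ref{lem3.5-5}) only under $(\mathbf P)$. Your part (2) and the lower bound in part (3) use the hard direction $\lambda_p\ge\lambda_v$, i.e., that a lower bound on the Rayleigh quotient produces admissible positive test pairs; the easy direction $\lambda_p\le\lambda_v$ follows from the ground-state transform with any positive test function and is unproblematic. To close this you would either need to prove $\lambda_p\ge\lambda_v$ without $(\mathbf P)$ (for instance: for $\lambda<\lambda_v$ the operator $-(\bm{\mathcal K}_{\bm{\mathcal D}}+\lambda\bm{\mathcal I})$ is coercive, hence invertible by Lax--Milgram, and $\varphi:=-(\bm{\mathcal K}_{\bm{\mathcal D}}+\lambda\bm{\mathcal I})^{-1}[\mathbf 1]$ can be shown continuous and positive using the Stieltjes/M-matrix structure of $-\lambda I-A(x)+D$, yielding a test pair), or restate parts (2)--(3) for $\lambda_v$. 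I would not count this against you relative to the paper: the paper's own proof of parts (1) and (3) assumes outright that a positive eigenfunction exists and that $\lambda_p$ coincides with the variational quantity, with the same unstated reliance.
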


\begin{theorem}[\textbf{Asymptotic behaviors with respect to dispersal rage}]\label{scaling limit}  Assume that $A(x)=(a_{ij}(x))\in (\mathcal{C}(\overline{\Omega}))^{N\times N}$ is a continuous symmetric matrix-valued function, defined on $\overline{\Omega}$, whose off-diagonal terms of $A(x)$ are positive and the conditions $(\mathbf J)$ and $(\mathbf P)$ hold. Assume also that $|z|^2J_i(z)\in L^1(\mathbb R^n)\ (1\leq i\leq N)$. Then, there hold:
\begin{itemize}
\item[i)] The case $0<m<2$:
	\begin{align*}
		& \lim_{\sigma \to 0} \lambda_p(\bm{\mathcal K}_{\sigma,m,\Omega} + \bm{\mathcal A})=-\sup_{\Omega} \bar \lambda( A(x))\\
		& \lim_{\sigma \to +\infty}\lambda_p(\bm{\mathcal K}_{\sigma,m,\Omega} + \bm{\mathcal A})=-\sup_{\Omega} \bar \lambda( A(x))
	\end{align*} 
\item[ii)] The case $m=0$:
\begin{align*}
		& \lim_{\sigma \to 0} \lambda_p(\bm{\mathcal K}_{\sigma,0,\Omega} + \bm{\mathcal A})=-\sup_{\Omega}  \bar\lambda(A(x));\\
		& \lim_{\sigma \to +\infty}\lambda_p(\bm{\mathcal K}_{\sigma,0,\Omega} + \bm{\mathcal A})=1-\sup_{\Omega}  \bar\lambda(A(x)).
	\end{align*} 
\end{itemize}	
\end{theorem}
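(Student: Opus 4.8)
The plan is to run the whole argument through the variational characterisation of the principal eigenvalue. Multiplying the eigenvalue equation by $\sigma^{m}$ turns $\bm{\mathcal K}_{\sigma,m,\Omega}+\bm{\mathcal A}=\sigma^{-m}(\bm{\mathcal N}_\sigma-\bm{\mathcal I})+\bm{\mathcal A}$ into $\bm{\mathcal N}_\sigma+\bm{\mathcal B}_\sigma$, where $\bm{\mathcal N}_\sigma$ carries the unit--mass kernels $J_{\sigma,i}$ on $\Omega$ (which satisfy $(\mathbf J)$) and $\bm{\mathcal B}_\sigma$ is multiplication by $\sigma^{m}A(x)-I$; the matrix $\sigma^{m}A(x)-I$ is symmetric with positive off--diagonal entries and still satisfies $(\mathbf P)$ (positive scaling and scalar shifts preserve it), so Theorem~\ref{existence-eigen} applies and, rescaling, gives for each fixed $\sigma>0$
\[
\lambda_p(\bm{\mathcal K}_{\sigma,m,\Omega}+\bm{\mathcal A})=\lambda_v(\bm{\mathcal K}_{\sigma,m,\Omega}+\bm{\mathcal A})=\inf_{\varphi\in\mathbf{E},\,\varphi\not\equiv0}R_\sigma(\varphi),\qquad R_\sigma(\varphi):=-\frac{\langle(\bm{\mathcal K}_{\sigma,m,\Omega}+\bm{\mathcal A})\varphi,\varphi\rangle}{\|\varphi\|_{\mathbf{E}}^{2}}.
\]
I will split $R_\sigma=R^{\mathrm{nl}}_\sigma+R^{\mathrm A}$ into the nonlocal part $R^{\mathrm{nl}}_\sigma(\varphi)=-\sigma^{-m}\langle(\bm{\mathcal N}_\sigma-\bm{\mathcal I})\varphi,\varphi\rangle/\|\varphi\|_{\mathbf{E}}^{2}$ and the zero--order part $R^{\mathrm A}(\varphi)=-\langle\bm{\mathcal A}\varphi,\varphi\rangle/\|\varphi\|_{\mathbf{E}}^{2}$, and use two elementary facts: the pointwise bound $v^{T}A(x)v\le\bar\lambda(A(x))|v|^{2}$, and the identity
\[
-\langle(\bm{\mathcal N}_\sigma-\bm{\mathcal I})\varphi,\varphi\rangle=\sum_{i=1}^{N}\int_\Omega\varphi_i(x)^{2}\Big(1-\int_\Omega J_{\sigma,i}(x-y)\,dy\Big)dx+\tfrac12\sum_{i=1}^{N}\int_\Omega\!\int_\Omega J_{\sigma,i}(x-y)\,|\varphi_i(x)-\varphi_i(y)|^{2}\,dy\,dx,
\]
whose two summands are nonnegative.

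The lower bounds are the easy half and hold uniformly in $\varphi$. One always has $R^{\mathrm A}(\varphi)\ge-\sup_{\overline{\Omega}}\bar\lambda(A(x))$; and for $0\le m<2$ the nonlocal part is $\ge0$ (for $m>0$ because of the positive factor $\sigma^{-m}$, for $m=0$ as $\sigma\to0$ because the identity above is nonnegative), so $\lambda_p(\bm{\mathcal K}_{\sigma,m,\Omega}+\bm{\mathcal A})\ge-\sup_{\overline{\Omega}}\bar\lambda(A(x))$, which already yields the required $\liminf$ as $\sigma\to0$ and, when $0<m<2$, as $\sigma\to+\infty$. For $m=0$ and $\sigma\to+\infty$, the bound $\int_\Omega J_{\sigma,i}(x-y)\,dy\le\|J_i\|_{L^\infty}|\Omega|\,\sigma^{-n}\to0$ makes the first summand of the identity $\ge(1-C\sigma^{-n})\|\varphi\|_{\mathbf{E}}^{2}$, whence $R^{\mathrm{nl}}_\sigma\ge1-C\sigma^{-n}$ and $\liminf_{\sigma\to+\infty}\lambda_p(\bm{\mathcal K}_{\sigma,0,\Omega}+\bm{\mathcal A})\ge1-\sup_{\overline{\Omega}}\bar\lambda(A(x))$.

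For the matching upper bounds I will evaluate $R_\sigma$ on localised test functions. Given $\eta>0$, by continuity of $x\mapsto\bar\lambda(A(x))$ pick an interior point $x_1\in\Omega$ with $\bar\lambda(A(x_1))>\sup_{\overline{\Omega}}\bar\lambda(A(x))-\eta$, let $e_1$ be a unit Perron eigenvector of $A(x_1)$ (Theorem~\ref{Perron-Frobenious}), set $\rho_1=\dist(x_1,\partial\Omega)>0$, and for $0<r<\rho_1/2$ take $\varphi_r=\chi_r e_1$ with $\chi_r$ a Lipschitz bump supported in $B_r(x_1)$, equal to $1$ on $B_{r/2}(x_1)$, $0\le\chi_r\le1$, $|\nabla\chi_r|\lesssim1/r$. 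Continuity of $A$ gives $R^{\mathrm A}(\varphi_r)\le-\bar\lambda(A(x_1))+\omega_A(r)\le-\sup_{\overline{\Omega}}\bar\lambda(A(x))+\eta+\omega_A(r)$, with $\omega_A$ the modulus of continuity of $A$ near $x_1$. For the nonlocal part I estimate the two summands of the identity on $\varphi_r$: since $\supp\chi_r$ stays at distance $\ge\rho_1/2$ from $\partial\Omega$, the boundary summand is $O(\sigma^{2})\|\varphi_r\|_{\mathbf{E}}^{2}$ by the moment hypothesis $|z|^{2}J_i\in L^{1}$, and the jump summand is both $O(\sigma^{2}/r^{2})\|\varphi_r\|_{\mathbf{E}}^{2}$ (same hypothesis, via $|\chi_r(x)-\chi_r(y)|\lesssim|x-y|/r$) and $O(\sigma^{-n})\|\varphi_r\|_{\mathbf{E}}^{2}$ (since $\|J_{\sigma,i}\|_{L^\infty}=\sigma^{-n}\|J_i\|_{L^\infty}$); consequently $R^{\mathrm{nl}}_\sigma(\varphi_r)=O_r(\sigma^{2-m})\to0$ as $\sigma\to0$ (this is where $0\le m<2$ is used), $R^{\mathrm{nl}}_\sigma(\varphi_r)=O_r(\sigma^{-m})\to0$ as $\sigma\to+\infty$ when $m>0$, while for $m=0$ one computes $R^{\mathrm{nl}}_\sigma(\varphi_r)=1-\langle\bm{\mathcal N}_\sigma\varphi_r,\varphi_r\rangle/\|\varphi_r\|_{\mathbf{E}}^{2}\to1$ as $\sigma\to+\infty$. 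Letting $\sigma$ tend to its limit, then $r\to0$, then $\eta\to0$ in $\lambda_p(\bm{\mathcal K}_{\sigma,m,\Omega}+\bm{\mathcal A})\le R_\sigma(\varphi_r)$ gives $\limsup_\sigma\lambda_p\le-\sup_{\overline{\Omega}}\bar\lambda(A(x))$ in every case except $m=0$, $\sigma\to+\infty$, where it gives $\limsup_{\sigma\to+\infty}\lambda_p\le1-\sup_{\overline{\Omega}}\bar\lambda(A(x))$; together with the lower bounds, all four limits follow.

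The main obstacle is the control of the nonlocal quadratic form along the localised test functions — showing that the Gagliardo--type energy $\sigma^{-m}\int\!\int J_{\sigma,i}|\varphi_i(x)-\varphi_i(y)|^{2}$ and the boundary--deficit term $\sigma^{-m}\int_\Omega\varphi_i^{2}\,(1-\int_\Omega J_{\sigma,i})$ both vanish in the limit. This is exactly where the assumption $|z|^{2}J_i\in L^{1}(\R^{n})$ and the restriction $0\le m<2$ are indispensable: they force these terms to be $O(\sigma^{2-m})$, hence negligible; for $m\ge2$ the energy no longer vanishes and the limit turns into a local, second--order elliptic eigenvalue problem — a genuinely different regime, outside the scope of the theorem. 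A secondary but necessary point is that the lower bounds must hold for \emph{every} $\varphi\in\mathbf{E}$, which is why one argues through the exact quadratic--form identity rather than through smoothing or density; and concentrating the test bump at an interior point — legitimate precisely because $\bar\lambda(A(\cdot))$ is continuous on $\overline{\Omega}$, so its supremum is approached from inside $\Omega$ — is what keeps the boundary--deficit term quadratically small and avoids any delicate analysis of the kernels near $\partial\Omega$.
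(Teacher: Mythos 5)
Your proof is correct, and while its overall skeleton (variational characterization $\lambda_p=\lambda_v$, trivial lower bound $-\sup_\Omega\bar\lambda(A)$, localized test functions for the upper bound, second moment of $J_i$ to kill the nonlocal energy at rate $\sigma^{2-m}$) coincides with the paper's, two of your ingredients are genuinely different and worth noting. First, for the upper bound the paper relies on its Lemma \ref{keylemma-cutoff-function}, whose test functions $f_k=v(x)\chi(|x-x_k|)$ are built from the \emph{continuous eigenvector field} $v(x)$ supplied by Hypothesis $(\mathbf P1)$; you instead freeze the eigenvector of the symmetric matrix $A(x_1)$ at a single near-maximizing interior point and absorb the error via the modulus of continuity of $A$, using only $v^TA(x)v\ge v^TA(x_1)v-\|A(x)-A(x_1)\|$. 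This bypasses the continuous eigenpair entirely, so in your argument Hypothesis $(\mathbf P)$ is needed only to invoke Theorem \ref{existence-eigen} and Lemma \ref{lem3.5-5} (i.e.\ to know $\lambda_p=\lambda_v$ after the rescaling $\bm{\mathcal K}_{\sigma,m,\Omega}+\bm{\mathcal A}=\sigma^{-m}\bigl(\bm{\mathcal N}_\sigma+\sigma^m\bm{\mathcal A}-\bm{\mathcal I}\bigr)$, a reduction you spell out more carefully than the paper does), not for the test-function construction itself. Second, in place of the $H^1_0$ comparison inequality of Lemma \ref{bcv-lem-I-le-J} you estimate the Gagliardo energy and the boundary-deficit term $\int_\Omega\varphi_i^2(1-p_{\sigma,i})$ directly (Lipschitz bound on the bump plus a Chebyshev bound on $\int_{|z|\ge\rho_1/2}J_{\sigma,i}$), and for the $m=0$, $\sigma\to+\infty$ lower bound you use $p_{\sigma,i}\le\|J_i\|_\infty|\Omega|\sigma^{-n}$ in the exact quadratic-form identity where the paper uses Cauchy--Schwarz; both give the same conclusions, and your version is self-contained and marginally more elementary. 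All the quantitative claims check out ($R^{\mathrm{nl}}_\sigma(\varphi_r)=O_r(\sigma^{2-m})$ as $\sigma\to0$, $O_r(\sigma^{-m})$ as $\sigma\to\infty$ for $m>0$, and $\to1$ for $m=0$), so the proposal is a valid, slightly streamlined alternative to the paper's proof.
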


 The paper is organised as follows. In Section \ref{sec.3}, we provide a counterexample to the existence of positive principal eigenfunction and a proof of Theorem \ref{existence-eigen} is given.  Next, a proof of Theorem \ref{pro.3.6} is intrduced in Section \ref{sec.2}. Finally, in Section \ref{bcv-section-scal} we derive the asymptotic behaviour of the principal eigenvalue under scaling and introduce a proof of Theorem \ref{scaling limit}.

\section{Existence and Variational characterizations of the principal eigenvalue}\label{sec.3}

\subsection{Square matrices whose off-diagonal terms are non-negative}
In this subsection, we consider a matrix-valued function $A(x)$ depending continuously on $x\in\overline{\Omega}$. First of all, let us recall the Perron-Frobenious Theorem (see \cite{Ga59}).
\begin{theorem}[Perron-Frobenious Theorem] \label{Perron-Frobenious}Let $A=(a_{ij})\in (\mathbb R)^{N\times N}$ be a real-valued square matrix whose off-diagonal terms are non-negative, (i.e. $a_{ij}\geq 0$ if $i\ne j$), there exists real eigenvalue $\bar\lambda(A)$ corresponding to a non-negative eigenvector, with the greatest real part (for any eigenvalue $\lambda'\ne \bar\lambda(A)$, $\mathrm{Re}(\lambda')<\bar\lambda(A))$. Moreover, if $a_{ij}>0$ for any $i\ne j$, then $\bar\lambda(A)$ is simple with strictly positive eigenvector, and it can be characterized as the unique eigenvalue corresponding to a non-negative vector.
\end{theorem}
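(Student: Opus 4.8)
The plan is to reduce the assertion to the classical Perron--Frobenius theorem for entrywise non-negative matrices by means of a scalar shift. First I would pick $s>0$ with $s>\max_{1\le i\le N}|a_{ii}|$ and set $B:=A+sI$; then every entry of $B$ is non-negative, since the off-diagonal entries are unchanged and non-negative by hypothesis, while the diagonal entries become $a_{ii}+s>0$. Because $Av=\lambda v \iff Bv=(\lambda+s)v$, the spectrum and eigenvectors of $A$ and $B$ correspond under the translation $\lambda\mapsto\lambda+s$, with all algebraic and geometric multiplicities preserved; so it is enough to study $B$ and translate back at the end.

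Next I would invoke the classical Perron--Frobenius theorem for the non-negative matrix $B$: its spectral radius $r:=\rho(B)$ is an eigenvalue of $B$ with a non-negative eigenvector $v\ge 0$, $v\ne 0$, and every eigenvalue $\mu$ of $B$ obeys $|\mu|\le r$. Then $\bar\lambda(A):=r-s$ is a real eigenvalue of $A$ with the same non-negative eigenvector $v$. To obtain that it has the strictly greatest real part, take any eigenvalue $\lambda'=\mu-s\ne\bar\lambda(A)$, so $\mu\ne r$; if $\mathrm{Re}(\mu)\ge r$ held, then $r\le\mathrm{Re}(\mu)\le|\mu|\le r$ would force $\mathrm{Re}(\mu)=|\mu|=r$ and $\mathrm{Im}(\mu)=0$, hence $\mu=r$, a contradiction. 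Thus $\mathrm{Re}(\lambda')=\mathrm{Re}(\mu)-s<r-s=\bar\lambda(A)$.

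For the ``moreover'' part, assume $a_{ij}>0$ for all $i\ne j$. The case $N=1$ is immediate, so suppose $N\ge2$; enlarging $s$ if needed I may also assume $a_{ii}+s>0$ for every $i$, so that $B=A+sI$ is entrywise strictly positive, hence irreducible (indeed primitive). The strong Perron--Frobenius theorem then gives that $r=\rho(B)$ is a \emph{simple} eigenvalue with a strictly positive eigenvector, and that $r$ is the only eigenvalue of $B$ admitting a non-negative eigenvector. Since the shift $\lambda\mapsto\lambda-s$ preserves simplicity, positivity of eigenvectors, and the property of admitting a non-negative eigenvector, these three facts transfer verbatim to $\bar\lambda(A)=r-s$.

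The only non-elementary input is the classical Perron--Frobenius theorem itself (for non-negative and for positive matrices), which I would take as known; see \cite{Ga59}. I do not expect a genuine obstacle here: the single point deserving care is upgrading the modulus bound $|\mu|\le r$ to the strict inequality on real parts, done above, together with the (routine) observation that simplicity, the sign of the eigenvector, and the existence of a non-negative eigenvector are all invariant under $A\mapsto A+sI$.
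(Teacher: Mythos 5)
Your proposal is correct. Note, however, that the paper offers no proof of this statement at all: it is recalled as a classical result with a citation to Gantmacher \cite{Ga59}, so there is no argument of the authors' to compare against. Your reduction via the diagonal shift $B=A+sI$ is the standard way to pass from the classical Perron--Frobenius theorem for (entrywise) non-negative and positive matrices to the version for matrices with non-negative off-diagonal entries, and every step checks out: the translation $\lambda\mapsto\lambda+s$ preserves eigenvectors and multiplicities, the chain $r\le\mathrm{Re}(\mu)\le|\mu|\le r$ correctly upgrades the modulus bound to strict dominance of the real part, and for strictly positive off-diagonal entries the matrix $B$ is entrywise positive (for $N\ge 2$), so the strong form of the theorem yields simplicity, the positive eigenvector, and the uniqueness of the eigenvalue admitting a non-negative eigenvector. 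The only cosmetic remark is that your initial choice $s>\max_i|a_{ii}|$ already guarantees $a_{ii}+s>0$, so the later enlargement of $s$ is redundant.
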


Now, we will finish this subsection with the following lemma which is useful for the proof of Theorem \ref{scaling limit} (cf. Subsubsection \ref{SS4.2.1}).
\begin{lemma} \label{keylemma-cutoff-function}
	Let $A(x)=(a_{ij}(x))\in (\mathcal{C}(\overline{\Omega}))^{N\times N}$ be a continuous symmetric matrix-valued function on $\overline{\Omega}$ satisfying the condition $(\mathbf P1)$. Let $\alpha=\sup_{x\in \overline{\Omega}}\bar \lambda (A(x))$. Then, there exist $\{x_k\}\subset \Omega$ be a sequence, a sequence $\{r_k\}\subset\mathbb R^+$ with $r_k \to 0^+$, and a sequence of $\mathcal{C}^\infty$-smooth functions $f_k: \Omega \to R^n$ with $\supp f_n\subset B(x_k,r_k)$ and $\|f_k\|_E=1$  such that 
	\begin{align*}
	\bar \lambda(A(x_k))>\alpha-\dfrac{1}{k}	\;\text{and}\;\int_\Omega	f_k(x)^T A(x) f_k(x) dx\geq \left(\alpha-\dfrac{2}{k}\right) , \; \forall n\in \mathbb{N}.
	\end{align*}	
\end{lemma}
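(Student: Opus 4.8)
The plan is to saturate the supremum $\alpha=\sup_{\overline{\Omega}}\bar\lambda(A(\cdot))$ by a sequence of tightly concentrated ``vector bumps'': a scalar mollifier supported in a shrinking ball, multiplied by a fixed optimal eigenvector of $A$ evaluated at the centre of that ball. Since $A(x)$ is symmetric and continuous on the compact set $\overline{\Omega}$, the function $\bar\lambda(A(\cdot))$ is continuous (cf. \cite[Theorem 6]{Lax07}), and by $(\mathbf P1)$ it attains its maximum $\alpha$ at some $x_0\in\overline{\Omega}$. Because $\Omega$ is open and $x_0\in\overline{\Omega}$, for every $k$ I can choose $x_k\in\Omega$ with $\|x_k-x_0\|<\frac1k$ and, using continuity of $\bar\lambda(A(\cdot))$, with $\bar\lambda(A(x_k))>\alpha-\frac1k$; this already yields the first asserted inequality, and $x_k\to x_0$. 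For each $k$, the spectral theorem applied to the real symmetric matrix $A(x_k)$ furnishes a unit vector $v_k\in\mathbb R^N$ with $v_k^{T}A(x_k)v_k=\bar\lambda(A(x_k))$.

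Next I would use the uniform continuity of $A$ on $\overline{\Omega}$ to pick $r_k\in(0,\frac1k)$ so small that $B(x_k,r_k)\subset\Omega$ and $\|A(y)-A(x_k)\|\le\frac1k$ (operator norm) for all $y\in B(x_k,r_k)$; then $r_k\to0$. Choosing a scalar bump $g_k\in C_c^{\infty}(B(x_k,r_k))$ with $g_k\ge0$ and $\|g_k\|_{L^2(\Omega)}=1$ (a rescaled standard mollifier) and setting $f_k:=g_k\,v_k\in(C_c^{\infty}(\Omega))^N\subset\mathbf C_c(\Omega)$, one gets $\supp f_k\subset B(x_k,r_k)$ and, since $\|v_k\|=1$,
\[
\|f_k\|_{\mathbf E}^2=\int_\Omega g_k(x)^2\,\|v_k\|^2\,dx=1 .
\]
Moreover, for $x\in\supp g_k$,
\[
v_k^{T}A(x)v_k\ \ge\ v_k^{T}A(x_k)v_k-\|A(x)-A(x_k)\|\ \ge\ \bar\lambda(A(x_k))-\frac1k\ >\ \alpha-\frac2k ,
\]
whence
\[
\int_\Omega f_k(x)^{T}A(x)f_k(x)\,dx=\int_\Omega g_k(x)^2\,v_k^{T}A(x)v_k\,dx\ \ge\ \Big(\alpha-\frac2k\Big)\int_\Omega g_k(x)^2\,dx=\alpha-\frac2k ,
\]
which is the second asserted inequality.

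This is essentially a routine concentration argument, so I do not expect a genuine obstacle; the only matters requiring a little care are the simultaneous bookkeeping of the smallness requirements ($x_k\to x_0$, $r_k\to0$, $B(x_k,r_k)\subset\Omega$, and the uniform-continuity estimate at scale $1/k$) and, in the case $x_0\in\partial\Omega$, the need to approach $x_0$ by centres $x_k$ lying inside $\Omega$ — both taken care of by the openness of $\Omega$ together with density of $\Omega$ in $\overline{\Omega}$. Note that only symmetry of $A$ (via the spectral theorem) and continuity are used; neither positivity of the off-diagonal entries nor the continuous-eigenpair part of $(\mathbf P1)$ enters this argument.
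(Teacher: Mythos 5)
Your proof is correct, and it takes a genuinely different (and leaner) route than the paper's. The paper works with the \emph{continuous eigenpair} $(\lambda_1(x),v(x))$ supplied by Hypothesis $(\mathbf P)$: it sets $\tilde f_k(x)=v(x)\chi(|x-x_k|)$ with a radial cutoff transitioning on an annulus $B(x_k,r_k)\setminus B(x_k,r_k')$, and then controls the loss from the cutoff by volume bookkeeping (the quantities $A_k,N_k,M_k,\epsilon_k$ and the choice of $r_k'$), followed by a final mollification to gain smoothness; this forces an auxiliary normalization (shifting $A$ by $d\,I$ so that $\lambda_1>0$) because the estimates divide by $\alpha-1/k$. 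You instead \emph{freeze} the eigenvector at the centre, $v_k$ being a unit maximizer of the Rayleigh quotient of the constant matrix $A(x_k)$, and absorb the spatial variation of $A$ over the small ball by uniform continuity in operator norm, via $|v_k^{T}(A(x)-A(x_k))v_k|\le\|A(x)-A(x_k)\|$. This eliminates the annulus estimates, the shift of $A$, and the a posteriori mollification (your $f_k=g_kv_k$ is already smooth and normalized), and — as you correctly observe — it uses only symmetry and continuity of $A$, not the continuous-eigenvector-selection part of $(\mathbf P1)$, so your argument actually proves the lemma under weaker hypotheses. The one ingredient you must (and do) supply that the paper gets for free from $(\mathbf P1)$ is the continuity of $x\mapsto\bar\lambda(A(x))$, which for symmetric matrices follows from the Lipschitz bound $|\bar\lambda(A)-\bar\lambda(B)|\le\|A-B\|$ (or the cited result of Lax), and which is needed to locate $x_k\in\Omega$ with $\bar\lambda(A(x_k))>\alpha-1/k$.
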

\begin{proof}
Since $A(x)$ satisfies the condition ($\mathbf P1$), there exists $x_0\in \overline{\Omega}$ such that $\alpha=\sup_{x\in \overline{\Omega}}\bar \lambda (A(x))=\bar \lambda (A(x_0))$. Moreover, there exist $\delta>0$ and a continuous eigenpair function denoted by $\left(\lambda_1(x), v(x)\right)$ defined on $\Omega_\delta $ such that $\lambda_1(x_0)=\alpha$. Replacing $A(x)-d.I$ for $d>0$ big enough and shrinking the domain $\Omega_\delta$ if necessary, we may assume that $\alpha>0$ and $\lambda_1(x)>0$ for all $x\in \Omega_\delta$.   

Let $\{x_k\}\subset \Omega_\delta$ be a sequence such that $\bar \lambda(A(x_k))\geq \lambda_1(x_k)>\alpha-\dfrac{1}{n}>0$ for every $n\in \mathbb N^*$. In addition, we may assume that there exists $r_k>0$ with $r_k \to 0^+$ as $n\to \infty$ such that $\bar\lambda(A(x))\geq \lambda_1(x)>\alpha -	\frac{1}{k}$ for all $x\in B(x_k, r_k)$. 
	
	Denote by
	\begin{align*}
		A_k&=\int_{B(x_k,r_k)} \lambda_1(x) \|v(x)\|^2 dx>0;\\
		N_k&=\sup_{B(x_k,r_k)} \|v(x)\|^2\\
		M_k&=2N_k\sup_{B(x_k,r_k)} \|A(x)\|^2.
	\end{align*}
Choose a sequence $\{\epsilon_k\}\subset \mathbb R^+$ with $\epsilon_k\to 0^+$ such that
	$$
	(\alpha-\frac{1}{k})\dfrac{1-\epsilon_k/2}{1+\epsilon_k}>\alpha-\frac{2}{k}, \; k\in \mathbb{N}^*.
	$$
	Now let $\{r_k'\}\subset \mathbb R^+$ be a sequence such that $0<r_k'<r_k$ and 
	$$
	\mathrm{vol}(B(x_k,r_k)\setminus B(x_k,r_k'))< \min\left\{\dfrac{\epsilon_k A_k}{N_k(\alpha-1/k)}, \dfrac{\epsilon_k A_k}{2M_k}\right\}.
	$$	
Next, we define	$\tilde f_k(x):=v(x).\chi(|x-x_k|)$, where $\chi$ is the continuous function satisfying
	\[
	\chi(t)=\begin{cases}
		1\quad \text{if}\quad |t|<r_k'\\
		0\quad \text{if}\quad |t|>r_k.
	\end{cases}
	\]
	
	Since $\lambda_1(x)\leq \|A(x)\|$ and $|f_k(x)^T A(x) \tilde f_k(x)|\leq \|A(x)\|.\|\tilde f_k (x)\|^2$, a computation shows that
	\begin{align*}
		\int_\Omega	\tilde f_k(x)^T A(x) \tilde f_k(x) dx&=	\int_{B(x_k,r_k)}	\tilde f_k(x)^T A(x) \tilde f_k(x) dx\\
		&\geq \int_{B(x_k,r_k')}	v(x)^T A(x) v(x) dx- \int_{B(x_k,r_k)\setminus B(x_k,r_k')}  \|A(x)\|.\|\tilde f_k (x)\|^2 dx\\
		& \geq \int_{B(x_k,r_k')} \lambda_1(x)\|v(x)\|^2 dx-  \int_{B(x_k,r_k)\setminus B(x_k,r_k')} \|A(x)\|.\|v(x)\|^2 dx\\
		&\geq \int_{B(x_k,r_k)} \lambda_1(x)\|v(x)\|^2 dx-\int_{B(x_k,r_k)\setminus B(x_k,r_k')} \lambda_1(x)\|v(x)\|^2 dx\\
		&-\int_{B(x_k,r_k)\setminus B(x_k,r_k')}\|A(x)\|.\|v(x)\|^2 dx\\
		&\geq \int_{B(x_k,r_k)} \lambda_1(x)\|v(x)\|^2 dx- 2\int_{B(x_k,r_k)\setminus B(x_k,r_k')}\|A(x)\|.\|v(x)\|^2 dx\\
		&\geq A_k -2\sup_{B(x_k,r_k)}\|A(x)\|\int_{B(x_k,r_k)\setminus B(x_k,r_k')} \|v(x)\|^2 dx\\
		&\geq A_k -2 N_k \sup_{B(x_k,r_k)}\|A(x)\|\mathrm{vol}(B(x_k,r_k)\setminus B(x_k,r_k'))\\
		&\geq A_k -M_k\mathrm{vol}(B(x_k,r_k)\setminus B(x_k,r_k'))\\
		&\geq A_k(1-\epsilon_k/2)
	\end{align*}	
	and 
	\begin{align*}
		\|\tilde f_k\|_E^2&=\int_\Omega	\|\tilde f_k(x)\|^2dx=	\int_{B(x_k,r_k)}	\|\tilde f_k(x)\|^2 dx\\
		&\leq \int_{B(x_k,r_k')}	\|\tilde f_k(x)\|^2 dx+\int_{B(x_k,r_k)\setminus B(x_k, r_k')}	\|\tilde f_k(x)\|^2 dx\\
		&\leq \dfrac{A_k}{\alpha -1/k}+N_k\mathrm{vol}(B(x_k,r_k)\setminus B(x_k,r_k')) \\
		&\leq \dfrac{A_k}{\alpha -1/k}+\dfrac{\epsilon_k A_k}{\alpha-1/k}= \dfrac{(1+\epsilon_k) A_k}{\alpha-1/k}.
	\end{align*}
	Therefore, one obtains that
	$$
	\dfrac{\int_\Omega	\tilde f_k(x)^T A(x) \tilde f_k(x) dx}{\int_\Omega	\|\tilde f_k(x)\|^2dx}\geq \dfrac{A_k(1-\epsilon_k/2)}{\dfrac{(1+\epsilon_k) A_k}{\alpha-1/k}}>\alpha-\frac{2}{k}.
	$$	
By using the convolution of $f_k$ with a smooth function with small support for each $n$, one can assume that $f_k$ is $\mathcal{C}^\infty$-smooth with $\supp f_k\subset B(x_k,r_k)$  for all $k\in \mathbb N^*$. Therefore, 	the desired sequene $\{f_k\}$ is finally defined by $f_k:=\tilde f_k/\|\tilde f_k\|_E $.	
\end{proof}

\subsection{Variational characterization of the principal eigenvalue}
\hspace{13cm}\newline
Let us begin with the well-definedness of the principal eigenvalue.

\begin{lemma} Assume that $\left({\bf{J}}\right)$ holds and $A(x)\in (\mathcal{C}(\overline{\Omega}))^{N\times N}$ with $a_{ij}(x)=a_{ji}(x)>0$ whenever $i\neq j$. Then the  principal eigenvalue $\lambda_p(\bm{\mathcal K})$ is well-defined. 
 \end{lemma}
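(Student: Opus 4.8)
The quantity to be made sense of is $\lambda_p(\bm{\mathcal K})=\sup S$, where
\[
S:=\bigl\{\lambda\in\mathbb R\colon \exists\,\varphi\in\mathbf C(\overline\Omega),\ \varphi>0,\ \bm{\mathcal K}[\varphi](x)+\lambda\varphi(x)\le 0\ \text{ in }\Omega\bigr\},
\]
so the plan is just to verify that $S$ is a nonempty subset of $\mathbb R$ that is bounded above; once both hold, $\sup S$ is a genuine finite real number and $\lambda_p(\bm{\mathcal K})$ is well-defined. I would prove the two properties separately, and each is short.

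For nonemptiness I would test with the constant vector $\varphi\equiv\mathbf 1=(1,\dots,1)^T$, which lies in $\mathbf C(\overline\Omega)$ and is strictly positive. Since each component of $\bm{\mathcal K}[\mathbf 1]$ has the form $\int_\Omega J_i(x-y)\,dy-\varphi_i(x)+\sum_{j=1}^N a_{ij}(x)$, assumption $(\mathbf J)$ gives $\int_\Omega J_i(x-y)\,dy\le\int_{\mathbb R^n}J_i=1$, and since $\overline\Omega$ is compact and $A\in(\mathcal C(\overline\Omega))^{N\times N}$, the whole vector $\bm{\mathcal K}[\mathbf 1]$ is bounded above on $\Omega$ by a finite constant $M$ depending only on $N$ and $\max_{i,j}\|a_{ij}\|_{L^\infty(\overline\Omega)}$. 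Hence $\bm{\mathcal K}[\mathbf 1]+\lambda\mathbf 1\le 0$ in $\Omega$ for every $\lambda\le -M$, so $(-\infty,-M]\subset S\ne\emptyset$.

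For boundedness from above, let $\lambda\in S$ be arbitrary with a witness $\varphi\in\mathbf C(\overline\Omega)$, $\varphi>0$. I would fix any single point $x\in\Omega$ and any index $i$ and read off the $i$-th component of $\bm{\mathcal K}[\varphi](x)+\lambda\varphi(x)\le 0$. There the nonlocal term $\int_\Omega J_i(x-y)\varphi_i(y)\,dy$ is $\ge 0$ (because $J_i\ge 0$ and $\varphi_i\ge 0$) and the off-diagonal part $\sum_{j\neq i}a_{ij}(x)\varphi_j(x)$ is $\ge 0$ (because $a_{ij}(x)>0$ for $j\neq i$ and $\varphi_j\ge 0$) — this is the sole place where the hypothesis on $A$ is used. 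Discarding these nonnegative quantities leaves $0\ge\bigl(\lambda-1+a_{ii}(x)\bigr)\varphi_i(x)$, and dividing by $\varphi_i(x)>0$ yields $\lambda\le 1-a_{ii}(x)\le 1+\max_{1\le k\le N}\|a_{kk}\|_{L^\infty(\overline\Omega)}=:M'$. Since $M'$ is independent of $\varphi$ and of $\lambda$, we conclude $S\subset(-\infty,M']$.

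Combining the two steps, $-M\le\lambda_p(\bm{\mathcal K})=\sup S\le M'<\infty$, which is the assertion. I do not expect any genuine obstacle here; the only mildly delicate points are orienting the normalisation $\int_{\mathbb R^n}J_i=1$ in the right direction in the nonemptiness step, and keeping track of the sign of the diagonal entries $a_{ii}$ (which, unlike the off-diagonal ones, are not assumed positive), which only cost the additive constant $\max_{1\le k\le N}\|a_{kk}\|_{L^\infty(\overline\Omega)}$ in the upper estimate.
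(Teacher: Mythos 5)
Your proof is correct and follows essentially the same route as the paper: nonemptiness of the admissible set via the constant test function $(1,\dots,1)$ with $\lambda$ sufficiently negative, and the upper bound by discarding the nonnegative nonlocal term and the nonnegative off-diagonal coupling. The only cosmetic difference is that you implicitly normalise $d_i=1$ in $\bm{\mathcal K}=\bm{\mathcal D}\bm{\mathcal N}-\bm{\mathcal D}+\bm{\mathcal A}$, so the constant $1$ in your upper estimate should read $\max_i d_i$; the paper handles this by reducing at the outset to $\bm{\mathcal N}+\bm{\mathcal A}$.
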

 \begin{proof}
 It suffices to show that $\lambda_p(\bm{\mathcal N}+\bm{\mathcal A})$ is well-defined. Let us first show that the set
 $$
 \Lambda:=\{\lambda \in \mathbb R\colon \exists \varphi \in \mathbf{C}(\overline{\Omega}), \varphi>0, \bm{\mathcal K}[\varphi](x)+\lambda \varphi(x)\leq 0 \text{ in }\Omega\}
 $$
 is non-empty. Indeed, let us denote $K(x)=\mathrm{diag}(k_1(x),\ldots, k_N(x))$ by
 $$
 k_i(x):=\int_\Omega J_i(x-y)dy, \; i=1,\ldots, N.
 $$
 In addition, we define $\mathbf K\colon \mathbf E\to \mathbf C$ given by $\mathbf K[\varphi](x)=\left(k_1(x)\varphi_1(x),\ldots, k_N(x)\varphi_N(x)\right)^T$. Then, fix a constant function $\psi=(1,\ldots,1)>0$ one always has $\bm{\mathcal N}[\psi]=\mathbf K[\psi]$. Therefore, for any $\lambda\leq -(N |\bm{\mathcal A}|_\infty+|\mathbf K|_\infty)$ we have
 $$
 \bm{\mathcal N}[\psi](x)+(\bm{\mathcal A}+\lambda\bm{\mathcal I})[\psi](x)= (\mathbf K+\bm{\mathcal A}+\lambda\bm{\mathcal I})[\psi](x)\leq (\mathbf K+\bm{\mathcal A}-N |\bm{\mathcal A}|_\infty\bm{\mathcal I}-|\mathbf K|_\infty\bm{\mathcal I})[\psi](x)\leq 0
 $$
 for all $x\in \Omega$, where $\displaystyle |\bm{\mathcal A}|_\infty:=\sup_{x\in \overline{\Omega};1\leq i,j\leq N}|a_{ij}(x)|$ and $\displaystyle |\mathbf K|_\infty=\sup_{x\in \overline{\Omega};1\leq i\leq N}|k_{i}(x)|$. This implies that $\Lambda$ is non-empty, as desired. 
 
 Next, since $J>0$ and $\|A(x)\|$ is bounded on $\overline{\Omega}$, for any continuous positive function $\phi$ it follows that
 $$
 \bm{\mathcal N}[\psi](x)+(\bm{\mathcal A}+|\bm{\mathcal A}|_\infty\bm{\mathcal I})[\phi](x)\geq 0.
 $$
 Hence, the set $\Lambda$ has upper bound and thus $\lambda_p$ is well-defined. 
 \end{proof}

 \subsection{Counterexample}
 In this subsection, we introduce several examples of nonlocal equation where no positive bounded eigenfunction exists.
 
 In \cite[Theorem $5.1$]{Co10}, the author considered the following principal eigenvalue problem:
 \begin{equation}\label{eq-11-3-1}
 \rho \int_\Omega u dx+a(x) u=\lambda u, 
 \end{equation}
 where $\rho>0$ and $a(x)\in \mathcal{C}(\overline{\Omega})$ satisfies the condition $\displaystyle \frac{1}{\sigma-a(x)}\in L^1_{loc}(\Omega)$, where $\sigma=\max_{\overline{\Omega}}a(x)$. Then he proved that if $\displaystyle\rho \int_\Omega \frac{dx}{\sigma-a(x)}<1$, then there exists no bounded continuous positive principal eigenfunction $\phi$ to the above equation (cf. \cite[Theorem $5.1$]{Co10}). However, following his proof even no positive eigenfunction $\phi\in L^1(\Omega)$ exists. 
 
 We note that in the equation (\ref{eq-11-3-1}), by replacing $a$ by $a-\sigma$ and then dividing both sides of (\ref{eq-11-3-1}) by $\rho$, it suffices to consider the following equation 
  \begin{equation}\label{eq-11-3-2}
  \int_\Omega u dx+a(x) u=\lambda u, 
 \end{equation}
 where $a(x)\leq 0$ and $\sigma=\max_{\overline{\Omega}}a(x)=0$. In addition, we have the following theorem.
 \begin{proposition} 
 \begin{itemize}
 \item[(a)] If $\displaystyle \int_\Omega \frac{dx}{-a(x)}<1$, then there exists no bounded positive principal eigenfunction $\phi\in L^1(\Omega)$ to (\ref{eq-11-3-2}).
 \item[(b)] If $\displaystyle \int_\Omega \frac{dx}{-a(x)}\geq 1$, then for each positive function $\phi\in L^1(\Omega)$ there exists $\lambda_\phi$ such that $(\lambda_\phi, \phi)$ is an eigenpair of (\ref{eq-11-3-2}).
 \end{itemize}
 \end{proposition}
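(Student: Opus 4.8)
The plan is to handle (a) and (b) together by exploiting the algebraic rigidity of (\ref{eq-11-3-2}), which reduces everything to locating a single real root. First I would observe that if $(\lambda,\phi)$ is \emph{any} eigenpair with $\phi>0$ and $\phi\in L^1(\Omega)$, and we set $c:=\int_\Omega\phi\,dy>0$, then (\ref{eq-11-3-2}) reads $(\lambda-a(x))\phi(x)=c$ for a.e.\ $x\in\Omega$. Since $\phi>0$ and $c>0$, this forces $\lambda-a(x)>0$ a.e., hence — using the continuity of $a$ together with $\max_{\overline{\Omega}}a=0$ — that $\lambda\ge 0$, and
\[
\phi(x)=\frac{c}{\lambda-a(x)}\qquad\text{a.e.\ in }\Omega.
\]
Substituting this back into $c=\int_\Omega\phi\,dy$ yields the characteristic identity $F(\lambda)=1$, where $F(\mu):=\int_\Omega\frac{dx}{\mu-a(x)}$. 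Conversely, for any $\mu\ge0$ with $F(\mu)=1$ and any $c>0$, the function $\phi:=c/(\mu-a(\cdot))$ is a positive element of $L^1(\Omega)$ with $\int_\Omega\phi=c$, and $(\mu,\phi)$ is an eigenpair of (\ref{eq-11-3-2}). Thus the positive $L^1(\Omega)$ eigenfunctions of (\ref{eq-11-3-2}) are either none at all, or precisely the positive multiples of $1/(\lambda^*-a(\cdot))$ for whatever $\lambda^*$ solves $F=1$ on $[0,\infty)$, each carrying the common eigenvalue $\lambda^*$.

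Next I would record the elementary behaviour of $F$ on $[0,\infty)$: it is finite on $(0,\infty)$ (indeed $F(\mu)\le|\Omega|/\mu$, since $\mu-a(x)\ge\mu$), continuous there by dominated convergence, and strictly decreasing, with $F(\mu)\to0$ as $\mu\to+\infty$; and by monotone convergence $F(\mu)\uparrow F(0)=\int_\Omega\frac{dx}{-a(x)}\in(0,+\infty]$ as $\mu\downarrow0$, where finiteness of $F(0)$ forces $a(x)<0$ for a.e.\ $x$.

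For \textbf{(b)}, assume $\int_\Omega\frac{dx}{-a(x)}\ge1$. If the inequality is strict (the value $+\infty$ included) then $F(0^{+})>1>0=F(+\infty)$, so continuity and strict monotonicity give a unique $\lambda^*\in(0,\infty)$ with $F(\lambda^*)=1$; if $\int_\Omega\frac{dx}{-a(x)}=1$ then $a<0$ a.e.\ and one takes $\lambda^*=0$, where $F(\lambda^*)=F(0)=1$. In either case the converse half of the first step shows that the positive $L^1(\Omega)$ eigenfunctions are precisely the functions $\phi=c/(\lambda^*-a(\cdot))$, $c>0$, so (\ref{eq-11-3-2}) does possess positive $L^1(\Omega)$ eigenfunctions — one of every prescribed mass $c>0$ — and for each such $\phi$ the number $\lambda_\phi:=\lambda^*$ makes $(\lambda_\phi,\phi)$ an eigenpair, which is the assertion. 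For \textbf{(a)}, assume $\int_\Omega\frac{dx}{-a(x)}<1$, so this quantity is finite and $a<0$ a.e.; suppose, for contradiction, that $(\lambda,\phi)$ were an eigenpair with $0<\phi\in L^1(\Omega)$ (boundedness of $\phi$ will not even be needed). By the first step $\lambda\ge0$ and $F(\lambda)=1$. If $\lambda=0$ this reads $1=\int_\Omega\frac{dx}{-a(x)}<1$, absurd; if $\lambda>0$ then $\frac{1}{\lambda-a(x)}<\frac{1}{-a(x)}$ a.e., hence $1=F(\lambda)<\int_\Omega\frac{dx}{-a(x)}<1$, again absurd. So no positive $L^1(\Omega)$ eigenfunction of (\ref{eq-11-3-2}) exists, a fortiori no bounded one.

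The only mildly delicate point is the borderline case $\int_\Omega\frac{dx}{-a(x)}=1$ in (b): there \emph{no} $\mu>0$ is admissible, since $F<1$ throughout $(0,\infty)$, so one is forced onto the endpoint $\lambda^*=0$, where the candidate eigenfunction $1/(-a(\cdot))$ is typically unbounded near the set $\{a=0\}$ yet still belongs to $L^1(\Omega)$ — exactly because $\int_\Omega\frac{dx}{-a(x)}=1<\infty$. All remaining steps are one-variable calculus together with the monotone and dominated convergence theorems, so no serious obstacle is expected.
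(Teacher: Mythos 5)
Your proof is correct and follows essentially the same route as the paper: both reduce the problem to the scalar characteristic equation $F(\lambda)=\int_\Omega \frac{dx}{\lambda-a(x)}=1$ for the rigid eigenfunction $\phi=c/(\lambda-a(\cdot))$, dispose of (a) via $F(\lambda)\le \int_\Omega\frac{dx}{-a(x)}<1$, and settle (b) by continuity and decay of $F$ on $[0,\infty)$ with $F(+\infty)=0$. If anything, your write-up is slightly more careful than the paper's at the borderline case $\int_\Omega\frac{dx}{-a(x)}=1$ (where one must take $\lambda^{*}=0$ and the eigenfunction may be unbounded yet integrable) and in reading part (b) as an existence statement for eigenpairs of the form $c/(\lambda^{*}-a(\cdot))$ rather than literally for an arbitrary positive $\phi\in L^1(\Omega)$.
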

 \begin{proof}
 Let $\phi$ be a positive function $\phi\in L^1(\Omega)$ that we normalize by $\int_\Omega \phi(x) dx=1$. Then, one considers the following equation 
 $$
   \int_\Omega \phi(x) dx+a(x) \phi(x)=\lambda \phi(x), \; x\in \Omega.
 $$
 This is equivalent to 
 $$
 \phi(x)=\dfrac{1}{\lambda -a(x)}, \; x\in \Omega, 
 $$
 and hence, $\lambda >0$ and
\begin{equation}\label{eq-11-3-3}
   F(\lambda):=\int_\Omega \dfrac{dx}{\lambda -a(x)}=1.
 \end{equation}

Note that $\displaystyle F(\lambda)\leq \int_\Omega \dfrac{dx}{-a(x)}$. Therefore, if $\displaystyle \int_\Omega \dfrac{dx}{-a(x)}<1$, then there is no $\lambda \geq 0$ satisfying the equation (\ref{eq-11-3-2}), which proves $(a)$.  Thus, we consider the case that $\displaystyle \int_\Omega \dfrac{dx}{-a(x)}\geq 1$. Then, thanks to the continuity of the function $F(\lambda)$, $F(0)\geq 1$, and $\displaystyle \lim_{\lambda\to +\infty} F(\lambda)=0$, there exists $\lambda=\lambda_p\geq 0$ such that $(\lambda_\phi, \phi)$ is an eigenpair of (\ref{eq-11-3-2}), which proves $(b)$.
 \end{proof}
 
 A typical example for the function $a(x)$ is the function $a(x)=-\sqrt{x}$ defined on $\Omega=(0,1/5)$ satisfying that $\displaystyle\int_{0}^{1/5} \frac{dx}{-a(x)}=\int_{0}^{1/5} \frac{dx}{\sqrt{x}}=\frac{2}{\sqrt{5}}<1$. Let us fix $\epsilon\in \Big(0, \dfrac{\sqrt{5}}{2}-1\Big)$. We now consider the following eigenvalue problem:
 \begin{align}\label{eq-12-3-1}
	\left\{\begin{array}{ll}
	\displaystyle\int_{0}^{1/5} J(x-y) u(y)dy-u(x)+ \dfrac{2}{3}(1-\sqrt{x})u(x)+\dfrac{1}{3}(1-\sqrt{x})v(x)= \lambda_p u(x), & x \in\; [0, 1/5],  \\
	\displaystyle\int_{0}^{1/5} J(x-y) v(y)dy-v(x)+\dfrac{1}{3}(1-\sqrt{x})u(x)+ \dfrac{2}{3}(1-\sqrt{x})v(x) = \lambda_p v(x), &  x \in\; [0, 1/5],
	\end{array}\right.
	\end{align}
 where $J(x)\in \mathcal{C}_0^{\infty}\left((-1,1)\right)$ be a nonnegative, symmetric function such that $|J(x)-1|<\epsilon$ for all $x\in [0,1/5]$.
 \begin{proposition} 
We have that there exists no positive principal eigenfunction $(u,v)\in L^1([0,1/5])\times  L^1([0,1/5])$ to (\ref{eq-12-3-1}).
 \end{proposition}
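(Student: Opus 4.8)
The plan is to decouple the $2\times2$ system into a single scalar nonlocal equation by diagonalising the constant part of the zero‑order matrix, and then to run an $\epsilon$‑quantitative version of the argument behind the preceding proposition. Here the zero‑order matrix is $A(x)=(1-\sqrt x)M$ with $M=\begin{pmatrix}2/3 & 1/3\\ 1/3 & 2/3\end{pmatrix}$, whose eigenvalues are $1$, with Perron eigenvector $(1,1)^T$, and $1/3$; since the spatial dependence enters only through the scalar factor $1-\sqrt x$, the eigenbasis of $A(x)$ is independent of $x$. Suppose, for a contradiction, that $(u,v)\in L^1([0,1/5])\times L^1([0,1/5])$ is a positive eigenfunction for the eigenvalue $\lambda_p$ of (\ref{eq-12-3-1}) (in particular $u,v\ge 0$ and $(u,v)\not\equiv(0,0)$). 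Adding the two equations and setting $w:=u+v\in L^1([0,1/5])$, the coupling terms collapse to $(1-\sqrt x)w$ and one gets the scalar identity
\[
\int_0^{1/5}J(x-y)\,w(y)\,dy=(\lambda_p+\sqrt x)\,w(x)\qquad\text{for a.e. }x\in(0,1/5).
\]
Moreover $w\ge 0$ and $w\not\equiv 0$: if $u\equiv 0$, the first equation of (\ref{eq-12-3-1}) forces $\frac{1}{3}(1-\sqrt x)v\equiv 0$, hence $v\equiv 0$ since $1-\sqrt x>0$ on $[0,1/5]$, contradicting nontriviality, and symmetrically in $v$; so $0<\|w\|_{L^1}<\infty$.

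Next I would use that, by symmetry of $J$ and $|J-1|<\epsilon$ on $[0,1/5]$, we have $1-\epsilon<J(x-y)<1+\epsilon$ for all $x,y\in[0,1/5]$ (since $|x-y|\le 1/5$). Integrating these kernel bounds against $w$ in the scalar identity gives, for a.e. $x\in(0,1/5)$,
\[
(1-\epsilon)\|w\|_{L^1}\le(\lambda_p+\sqrt x)\,w(x)\le(1+\epsilon)\|w\|_{L^1}.
\]
The middle quantity is then strictly positive and $w(x)\ge 0$, so both factors are positive; in particular $w(x)>0$ a.e. and $\lambda_p+\sqrt x>0$ a.e., whence $\lambda_p\ge 0$. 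The right inequality rewrites as $w(x)\le(1+\epsilon)\|w\|_{L^1}(\lambda_p+\sqrt x)^{-1}$; integrating over $(0,1/5)$ and using $\lambda_p\ge 0$ so that $(\lambda_p+\sqrt x)^{-1}\le x^{-1/2}$,
\[
\|w\|_{L^1}\le(1+\epsilon)\|w\|_{L^1}\int_0^{1/5}\frac{dx}{\lambda_p+\sqrt x}\le(1+\epsilon)\|w\|_{L^1}\int_0^{1/5}\frac{dx}{\sqrt x}=\frac{2(1+\epsilon)}{\sqrt5}\,\|w\|_{L^1}.
\]
Dividing by $\|w\|_{L^1}>0$ yields $1\le 2(1+\epsilon)/\sqrt5$, i.e. $\epsilon\ge \sqrt5/2-1$, contradicting the standing choice $\epsilon\in(0,\sqrt5/2-1)$. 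Hence no such $(u,v)$ exists.

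I do not expect a genuine obstacle here; the content is entirely in the structural remark that $M$ carries an $x$‑independent Perron eigenvector, which reduces the system to precisely the borderline scalar situation $a(x)=-\sqrt x$ with $\int_0^{1/5}dx/\sqrt x=2/\sqrt5<1$ of part (a) of the preceding proposition, the factor $1+\epsilon$ merely absorbing the deviation of $J$ from the constant kernel $1$. The only points to check carefully are that "positive eigenfunction" may be read as "nonnegative, nontrivial" (the scalar identity then upgrades it to $w>0$ a.e. automatically), that every integral manipulation above is licit because all integrands are nonnegative and $w\in L^1$, and — for context — that this is consistent with Hypothesis $(\mathbf P)$ failing, since $\sigma-\lambda_1(x)=\sqrt x$ here makes $\int_{\Omega_\delta}dx/(\sigma-\lambda_1(x))$ finite, violating $(\mathbf P2)$.
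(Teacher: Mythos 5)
Your proposal is correct and follows essentially the same route as the paper: add the two equations to reduce to the scalar identity for $w=u+v$, use $|J-1|<\epsilon$ to sandwich $(\lambda_p+\sqrt{x})\,w(x)$ between $(1\mp\epsilon)\|w\|_{L^1}$, deduce $\lambda_p\ge 0$, and integrate to reach the contradiction $1\le \tfrac{2}{\sqrt5}(1+\epsilon)<1$. The only differences are cosmetic (the paper normalizes $\phi=(u+v)/2$ with $\int\phi=1$ instead of carrying $\|w\|_{L^1}$), plus your added but harmless remarks on the Perron eigenvector and the failure of $(\mathbf P2)$.
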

 \begin{proof} We ague by contradiction. We suppose otherwise that there exists an eigenpair $((u,v), \lambda_p )$ of (\ref{eq-12-3-1}). 
 Then by adding two equations in (\ref{eq-12-3-1}) we obtain the following equation
\begin{equation}\label{eq-12-3-2}
 \displaystyle\int_{0}^{1/5} J(x-y) \phi(y)dy -\sqrt{x}\phi(x)= \lambda_p \phi(x), \; x\in [0,1/5],
\end{equation}
 where $\phi(x):=(u(x)+v(x))/2>0$. Without loss of generality, we may normalize that $ \displaystyle\int_{0}^{1/5}\phi(x)dx=1$. 
 Therefore, one has
$$
 \displaystyle \int_{0}^{1/5} \phi(y)dy+\int_{0}^{1/5} \left(J(x-y)-1\right) \phi(y)dy -\sqrt{x}\phi(x)= \lambda_p \phi(x), \; x\in [0,1/5].
$$
Since $\int_{0}^{1/5} \phi(y)dy=1$ and 
$$
\displaystyle \left|\int_{0}^{1/5} \left(J(x-y)-1\right) \phi(y)dy\right|\leq \int_{0}^{1/5} \left|J(x-y)-1\right| \phi(y)dy<\epsilon \int_{0}^{1/5} \phi(y)dy=\epsilon,\; \forall x\in [0,1/5],
$$
it follows that $\displaystyle 1-\epsilon\leq (\lambda_p+\sqrt{x}) \phi(x)\leq 1+\epsilon$ for all $x\in [0,1/5]$. This implies that $\lambda_p\geq 0$ and 
$$
\phi(x)\leq \frac{1+\epsilon}{\lambda_p+\sqrt{x}}, \; \forall x\in [0, 1/5],
$$
which yields 
$$
1=\int_{0}^{1/5} \phi(x)dx\leq \int_{0}^{1/5} \frac{1+\epsilon}{\lambda_p+\sqrt{x}}dx\leq \int_{0}^{1/5} \frac{1+\epsilon}{\sqrt{x}}dx=\frac{2}{\sqrt{5}} (1+\epsilon).
$$
This is a contradiction since $\displaystyle\frac{2}{\sqrt{5}} (1+\epsilon)<1$. Hence, the proof is complete.
 \end{proof}
 \begin{remark}
We note that the matrix function $A(x)$ is not Lipschitz continuous and its eigenvalue function $\lambda_1(x)=1-\sqrt{x}$ is continuous but not Lipschitz continuous. 
\end{remark}
 \subsection{The existence of the principal eigenvalue}
 In what follows, for $\delta>0$, let us recall 
 
 $$\Omega_\delta:=\{x\in \Omega\colon \|x-x_0\|<\delta\}.$$

%\end{corollary}

\begin{proof}[\textbf{Proof of Theorem \ref{existence-eigen}}]
We shall following the ideas in \cite[Theorem $3.1$]{HMMV03}. However, much more elaborate computations are needed to deal with matrix-valued functions.

Firstly, for ${\varphi}=\left(\varphi_1,\ldots,\varphi_N\right)^T,\psi=\left(\psi_1,\ldots,\psi_N\right)^T\in \mathbf{E}$, we have
\begin{align*}
\left \langle \bm{\mathcal K}{\varphi}, \psi\right \rangle=&d_1\displaystyle\int_\Omega\displaystyle\int_\Omega J_1(x-y)\varphi_1(x)\psi_1(y)dxdy +\cdots+d_N\displaystyle\int_\Omega\displaystyle\int_\Omega J_N(x-y)\varphi_N(x)\psi_N(y)dxdy\\
&
+\displaystyle\int_\Omega \varphi(x)^T A(x) \psi(x) dx.\\
\left \langle{\varphi}, \bm{\mathcal K}\psi\right \rangle=&d_1\displaystyle\int_\Omega\displaystyle\int_\Omega J_1(x-y)\varphi_1(x)\psi_1(y)dxdy+\cdots +d_N\displaystyle\int_\Omega\displaystyle\int_\Omega J_N(x-y)\varphi_N(x)\psi_N(y)dxdy\\
&
+\displaystyle \int_\Omega \varphi(x)^T A(x) \psi(x) dx.
\end{align*}
Thanks to $A^T=A$, it is easy to see that $\left \langle \bm{\mathcal K}{\varphi}, \psi\right \rangle=\left \langle{\varphi}, \bm{\mathcal K}\psi\right \rangle$, which implies that $\bm{\mathcal K}$ is self-adjoint.

Now we show that  all eigenvalues of $\bm{\mathcal K}$ are real. Indeed, suppose that $\varphi$ is an eigenfuntion associated to an eigenvalue $\lambda$, i.e., $\bm{\mathcal K}\varphi=\lambda \varphi$. Then we also have $\bm{\mathcal K}\bar\varphi=\bar\lambda \bar\varphi$. Therefore, since $\langle \bm{\mathcal K}\varphi, \bar\varphi\rangle=\langle\varphi, \bm{\mathcal K}\bar\varphi\rangle$ it follows that 
$$
\lambda \langle\varphi, \bar\varphi\rangle=\bar \lambda \langle\bar \varphi,\varphi\rangle.
$$
This yields that $\lambda =\bar\lambda$, as desired. 

Let us define
\begin{align}\label{lamda+}
\lambda_- =- \sup\limits_{\left\|{\varphi}\right\|_{E}=1}\left \langle \bm{\mathcal K}{\varphi}, {\varphi}\right \rangle .
\end{align}

To complete the proof, we must first establish the following
\begin{align}\label{eq3.3}
\lambda_-<- \max_{x\in \overline{\Omega}}\bar\lambda(A(x)).
\end{align}

Set $\alpha=\max_{x\in \overline{\Omega}}\bar\lambda(A(x))$. Then, to prove (\ref{eq3.3}), it suffices to show that there exists $v\in \mathbf E$ such that 
$$
\alpha \|v\|_{\mathbf E}^2-\langle{\bm{\mathcal A}} v, v\rangle<\langle\bm{\mathcal N} v,v\rangle,
$$
that is
\begin{equation}\label{eq-15-3-1}
\displaystyle \int_\Omega v(x)^T\left(\alpha I-A(x)\right) v(x) dx <\sum_{j=1}^N\int_\Omega\int_\Omega  J_j(x-y) v_j(x) v_j(y) dxdy.
\end{equation}

 Consider the continuous eigenvalue functions $\lambda_1(x),\ldots,\lambda_N(x)$ of $A(x)$. Since $A(x)$ satisfies the condition $(\mathbf P)$, one can choose $x_0\in \overline{\Omega}$ such that $\max_{x\in \overline{\Omega}}\bar\lambda(A(x))=\bar\lambda(A(x_0))$ and there exists a continuous eigenpair function, say $(\lambda_1, e(x))$ with $\|e(x)\|=1$, of $A(x)$ such that the condition $(\mathbf P1)$ and $(\mathbf P2)$ are satisfied.  

Since $J_j(0)>0$ for all $j=1,\ldots, N$, there exists $\epsilon, \delta>0$ such that 
\begin{equation}\label{eq-15-3-2}
J_j(x-y) >\epsilon, \quad\forall  x,y\in  \Omega_\delta, \; j=1, \ldots, N.
\end{equation}
Moreover, since $e(x_0)\ne 0$ there exists $j_0\in\{1,\ldots, N\}$ such that $e_{j_0}(x_0)\ne 0$. Without loss of generality, we may assume that 
 $e_{j_0}(x_0)>0$. Thanks to the continuity of  the function $e_{j_0}$ at $x_0$,  by shrinking the domain $\Omega_\delta$ if necessary we may assume that
  \begin{equation}\label{eq-15-3-5}
 \begin{split}
 e_{j_0}(x)> \tau>0, \; \forall x\in \Omega_\delta 
 \end{split}
 \end{equation}
 for some constant $\tau>0$.

Now let us define $v(x):=g(x).e(x)$
\[\displaystyle 
g(x)=\begin{cases}
\dfrac{1}{\gamma+\lambda_1(x_0)-\lambda_1(x)}\quad &\text{if}\quad x\in \Omega_\delta\\
0 \quad &\text{if}\quad x\not\in \Omega_\delta,
\end{cases}
\]
where $\gamma>0$ will be chosen later. 

It follows from the condition ($\mathbf P2$) that there exists $\gamma>0$ small enough such that the following estimate holds true:
 \begin{equation}\label{eq-15-3-3}
 \begin{split}
 \displaystyle  \int_{\Omega _\delta}\dfrac{1}{\gamma+\lambda_1(x_0)-\lambda_1(x)} dx \geq \frac{1}{\epsilon \tau^2}.
 \end{split}
 \end{equation}
  Therefore, by (\ref{eq-15-3-2}) one gets
  \begin{equation}\label{eq-15-3-5}
 \begin{split}
 \sum_{j=1}^N\int_\Omega\int_\Omega  J_j(x-y) v_j(x) v_j(y) dxdy&= \sum_{j=1}^N\int_{\Omega_\delta}\int_{\Omega_\delta}  J_j(x-y) v_j(x) v_j(y) dxdy\\
 &\geq \epsilon \sum_{j=1}^N\int_{\Omega_\delta}\int_{\Omega_\delta}  v_j(x) v_j(y) dxdy\\
 &=\epsilon \sum_{j=1}^N\Big( \int_{\Omega_\delta} v_j(x) dx\Big)^2\\
 &=\epsilon \sum_{j=1}^N\Big( \int_{\Omega_\delta}\dfrac{1}{\gamma+\lambda_1(x_0)-\lambda_1(x)} e_j(x) dx\Big)^2\\
 &\geq\epsilon \Big( \int_{\Omega_\delta}\dfrac{1}{\gamma+\lambda_1(x_0)-\lambda_1(x)} e_{j_0}(x) dx\Big)^2\\
 &\geq\epsilon \tau^2\Big( \int_{\Omega_\delta}\dfrac{1}{\gamma+\lambda_1(x_0)-\lambda_1(x)} dx\Big)^2.
  \end{split}
 \end{equation}
 Furthermore, since $\displaystyle v(x)^T \left(A(x)\right) v(x)=\lambda_1(x) \|v(x)\|^2= g(x)^2\lambda_1(x)$ for all $x\in \Omega$, we have
 \begin{equation}\label{eq-15-3-4}
 \begin{split}
 \displaystyle  \int_\Omega v(x)^T \left(\alpha I-A(x)\right) v(x) dx&= \int_{\Omega _\delta}\dfrac{\lambda_1(x_0)-\lambda_1(x)}{\left(\gamma+\lambda_1(x_0)-\lambda_1(x)\right)^2} dx\\
 &\leq  \int_{\Omega _\delta}\dfrac{1}{\gamma+\lambda_1(x_0)-\lambda_1(x)} dx\\
  &\leq \epsilon C^2\Big( \int_{\Omega _\delta}\dfrac{1}{\gamma+\lambda_1(x_0)-\lambda_1(x)} dx\Big)^2\\
   &\leq \sum_{j=1}^N\int_\Omega\int_\Omega  J_j(x-y) v_j(x) v_j(y) dxdy,
 \end{split}
 \end{equation}
 where the last inequality follows from (\ref{eq-15-3-5}). This yields (\ref{eq-15-3-1}) and $(\ref{eq3.3})$ hence follows.

From the equation $\eqref{lamda+}$, it is standard that there is a sequence $\left\{{\varphi}^k \right\}\subset \mathbf{E}$ with $\left\|{\varphi}^k\right\|_{\mathbf{E}} = 1, (\forall k\in \mathbb{N})$ such that
\begin{align*}%\label{3.4}
\lim\limits_{k\to+\infty}\left\|\left(\bm{\mathcal K} + \lambda_-\bm{\mathcal I}\right){\varphi}^k \right\|_{\mathbf{E}}=0.
\end{align*}
Let us define the operator: 
$\widetilde{\bm{\mathcal{H}}}: \mathbf{E}\to \mathbf{E}$ satisfies
$\left(\widetilde{\bm{\mathcal{H}}}{\varphi}\right)(x) = \left[-\lambda_- \bm{\mathcal I}- \bm{\mathcal A} (x)\right]{\varphi}(x).$
It follows by \eqref{eq3.3}, we deduce that $\det(-\lambda_- I-A(x))\neq 0$ (since $\displaystyle -\lambda_- >\sup_{x\in \overline{\Omega}}\bar\lambda(A(x))$), so $\widetilde{\bm{\mathcal{H}}}$ has a bounded inverse. It is notice that
\begin{align*}
\left(\bm{\mathcal K} + \lambda_-\bm{\mathcal I}\right){\varphi}^k = \bm{\mathcal N}{\varphi}^k - \widetilde{\bm{\mathcal{H}}}{\varphi}^k, \,\,\,\text{and}\,\,\, \widetilde{\bm{\mathcal{H}}}^{-1}\bm{\mathcal N}{\varphi}^k - {\varphi}^k = \widetilde{\bm{\mathcal{H}}}^{-1}\left(\bm{\mathcal N}{\varphi}^k - \widetilde{\bm{\mathcal{H}}}{\varphi}^k\right).
\end{align*}
 Since $\bm{\mathcal N}$ is compact, there is a subsequence, still denoted by $\left\{{\varphi}^k\right\}$ such that $\bm{\mathcal N}{\varphi}^k\to {v}\in \mathbf{C}$; let ${\phi} = \widetilde{\bm{\mathcal{H}}}^{-1}{ v}\in\mathbf{C}$. (We emphasize here that although the sequence $\left\{{\varphi}^k \right\}\subset \mathbf{E}$, but the sequence of $\bm{\mathcal N}{\varphi}^k \in \mathbf{C}$ and thus ${ v}\in \mathbf C$.) Then $\lim\limits_{k\to+\infty}\widetilde{\bm{\mathcal{H}}}^{-1}\bm{\mathcal N}{\varphi}^k = \widetilde{\bm{\mathcal{H}}}^{-1}{v}={\phi}$. This implies $\widetilde{\bm{\mathcal{H}}}^{-1}\bm{\mathcal N}{\varphi}^k - {\varphi}^k \to 0,\,\,{\varphi}^k \to {\phi}$. We deduce that $\widetilde{\bm{\mathcal{H}}}^{-1}\bm{\mathcal N}{\phi} = {\phi}$, which leads to 
 \begin{align}\label{eigenfunction}
 \bm{\mathcal K}{\phi} +\lambda_-{\phi}=0.
 \end{align}
 Note that ${\phi}\neq 0$ since $\|{\varphi}^k\|_{\mathbf{E}}=1$ and ${\varphi}^k\rightarrow {\phi}$. Therefore  $\psi=\left(\phi_1,\ldots,\phi_N\right)$  (assumed normalised) is an eigenfunction of $\bm{\mathcal K}$ corresponding to the eigenvalue $\lambda_p=\lambda_-$, and $\phi$ is continuous. Note that,  with test function $\psi=\left(|\phi_1|,\ldots,|\phi_N| \right)$, we have
 \begin{align*}
 \begin{array}{lll}
  \left \langle \bm{\mathcal K}\psi,\psi\right \rangle+\lambda_-&= \left \langle \bm{\mathcal K}\psi,\psi\right \rangle-\left \langle \bm{\mathcal K}{\phi},{\phi}\right \rangle\\
  &= \displaystyle \sum_{j=1}^N d_j\displaystyle\int_\Omega\displaystyle\int_\Omega J_j(x-y)\left(|\phi_j(x)\phi_j(y)|-\phi_j(x)\phi_j(y)\right)dxdy\\
&+\displaystyle 2\sum_{i\ne j}\int_\Omega a_{ij}(x) (\left(|\varphi_i(x)\phi_j(x)|-\phi_i(x)\phi_j(x)\right)dx  \geq 0.
 \end{array}
 \end{align*} 
 Hence, by $\eqref{lamda+}$ we induce that
% \begin{align*}
 %&
 $\displaystyle\int_\Omega\displaystyle\int_\Omega J_i(x-y)\left(|\phi_i(x)\phi_i(y)|-\phi_i(x)\phi_i(y)\right)dxdy=0, i=1,\ldots, N,\;\text{and}$
% \\&
  $\displaystyle\int_\Omega \left(|\phi_i(x)\phi_j(x)|-\phi_i(x)\phi_j(x)\right)dx=0$ for all $i\ne j$ (note that $a_{ij}(x)> 0$ for any $1\leq i,j\leq N$).
% \end{align*}
 Since the functions under the integrals are continuous and non-negative, we follow that
 \begin{align}\label{3.10}
 \phi_i(x)\phi_i(y)\geq 0,\text{ for all }x,y\in \Omega, 1\leq i\leq N.
 \end{align}

Now, if there exists $x_0\in \Omega$ such that $\phi_i(x_0)<0$, then from $\eqref{3.10}$,
% $\eqref{3.11}$
 we have $-{\phi}\in \mathbf{E}^{+}$. In this case, we can choose $-{\phi}$, which is non-negative eigenfunction. Therefore, we may assume that $\phi\geq 0$. Note that $\phi$ cannot be a eigenfunction if $\phi_1(x_0)=0$ or $\phi_2(x_0)=0$ for some $x_0\in \overline{\Omega}$. For if it were, assume $\phi_1(x_0)=0$, putting $x=x_0$ in equation $\eqref{eigenfunction}$ gives
\begin{align}\label{x0}
\left\{\begin{array}{lll}
d_1\displaystyle\int_\Omega J_1(x_0-y)\phi_1(y)dy+\sum_{j=2}^N a_{1j}\phi_j(x_0)&=0,\\
d_2\displaystyle\int_\Omega J_2(x_0-y)\phi_2(y)dy+(a_{22}-d_2)\phi_2(x_0)+\sum_{j=1}^N a_{2j}\phi_j(x_0)&=-\lambda_-\phi_2(x_0)\\
\vdots&\vdots\\
d_N\displaystyle\int_\Omega J_N(x_0-y)\phi_N(y)dy+(a_{NN}-d_N)\phi_N(x_0)+\sum_{j=1}^{N} a_{jN}\phi_j(x_0)&=-\lambda_-\phi_N(x_0)\\
\end{array}\right.
\end{align}
Due to ${\bf {(J)}}$ and $\phi_j\geq 0$ then the first equation of $\eqref{x0}$ yields $\phi_1(x)=0,\forall x\in \Omega$ and $\phi_j(x_0)=0$ for all $2\leq j\leq N$. Now, the equations of $\eqref{x0}$ lead to $\phi_j(x)=0,\forall x\in \Omega$ for all $2\leq j\leq N$ . This contradiction proves that ${\phi}>0$. The uniqueness can be obtained by a simple consequence, for if ${\phi}$ and $\psi$ were different eigenfunctions, ${\phi}-\psi$ would be an eigenfunction. But this may change sign, contradicting the positive. In orther to show the conclusion, it is sufficient to prove that $\lambda\leq \lambda_-$ for any eigenvalue $\lambda$. Assume by contradiction that $\lambda>\lambda_-$ we have $\left \langle \bm{\mathcal K} {\varphi}, {\varphi} \right \rangle \leq \lambda_-\|{\varphi}\|^{2}, \; \forall {\varphi} \in E,$
and thus
\begin{align*}
\left \langle \lambda {\varphi}- \bm{\mathcal K} {\varphi}, {\varphi}\right \rangle  \geq\left(\lambda-\lambda_{-}\right)\|{\varphi}\|^{2}=\alpha\|{\varphi}\|^{2}, \forall {\varphi} \in \mathbf{E} \;\text{ with } \alpha>0.
\end{align*}
Applying Lax–Milgram’s theorem, we deduce that $\lambda \bm{\mathcal I}-\bm{\mathcal K}$ is bijective
and thus $\lambda\in \rho(\bm{\mathcal K})$. This is impossible as $\lambda$ is eigenvalue of $\bm{\mathcal K}$.

\end{proof}

 \begin{remark}
 \begin{itemize}
  
\item[i)]We note that the matrix function $A(x)$ is  not Lipschitz continuous and its eigenvalue function $\lambda_1(x)=1-\sqrt{x}$ is continuous but not Lipschitz continuous. Therefore, Theorem \ref{existence-eigen} cannot be applied to the system (\ref{eq-12-3-1}). In fact, there is no positive principal eigenfunction $(u,v)\in L^1([0,1/5])\times  L^1([0,1/5])$ to (\ref{eq-12-3-1}). 
\item[ii)] Following the proof of Theorem \ref{existence-eigen}, one sees that Theorem \ref{existence-eigen} holds for the case that the kernel function $J_i(x-y) \in \mathcal{C}^0(\overline{\Omega})$ is non-negative and $J_i(x-x)=J_i(0)>0$ for all $1\leq i\leq N$ and $x\in\overline{\Omega}$. 
\end{itemize}
\end{remark}
 
\begin{proposition}\label{proposition_3.4}
	Assume ${\bf(J)}$ hold and let $A(x) = (a_{ij}(x))\in  (\mathcal{C}(\overline{\Omega}))^{N\times N}, B(x) = (b_{ij}(x))\in  (\mathcal{C}(\overline{\Omega}))^{N\times N}$ be two symmetric matrix-valued functions on $\overline{\Omega}\subset\mathbb R^n$.
\begin{itemize}	
\item[a)] If there exists a function $\widetilde{{\varphi}}=\left(\widetilde{\varphi}_1,\ldots, \widetilde{\varphi}_N\right)\in \mathbf{C}\left(\overline{\Omega}\right)$ with $\widetilde{\varphi}_1,\ldots, \widetilde{\varphi}_N\geq, \not\equiv 0$ in $\overline{\Omega}$ and a constant $\widetilde{\lambda}$ such that
\begin{align}\label{EPV}
\left({\bf \bm{\mathcal D} \bm{\mathcal N}} - \bm{\mathcal D} +\bm{\mathcal A}\right)[\widetilde{{\varphi}}]+\widetilde{\lambda}\widetilde{{\varphi}}\leq 0,\;\;\text{in}\;\;\Omega,
\end{align}
then $\lambda_p\geq \widetilde{\lambda}$, where $\lambda_p$ is the eigenvalue of problem \eqref{PEV}. Moreover, $\lambda_p= \widetilde{\lambda}$ only if equality holds in \eqref{EPV}.

\item[b)] If there exists a function $\widetilde{{\varphi}}=\left(\widetilde{\varphi}_1, \ldots,\widetilde{\varphi}_N\right)\in  \mathbf{C}\left(\overline{\Omega}\right)$ with $\widetilde{\varphi}_1, \ldots, \widetilde{\varphi}_N\geq, \not\equiv 0$ in $\overline{\Omega}$ and a constant $\widetilde{\lambda}$ such that
\begin{align}\label{EPV2}
\left({\bf \bm{\mathcal D} \bm{\mathcal N}} - \bm{\mathcal D} +\bm{\mathcal A}\right)[\widetilde{{\varphi}}]+\widetilde{\lambda}\widetilde{{\varphi}}\geq 0,\;\;\text{in}\;\; \Omega,
\end{align}
then $\lambda_p\leq \widetilde{\lambda}$, where $\lambda_p$ is the eigenvalue of problem \eqref{PEV}. Moreover, $\lambda_p= \widetilde{\lambda}$ only if equality holds in \eqref{EPV2}.

\item[c)] If $a_{ij}(x)\leq b_{ij}(x)$ for all $i,j=1,\ldots, N$ and for all $x\in \Omega$ then $\lambda_p(\bm{\mathcal A})+m\leq \lambda_p(\bm{\mathcal B})$, where $m=\inf_{\Omega}\min\{a_{11}(x)-b_{11}(x),\ldots,a_{nn}(x)-b_{nn}(x)\}$.
 \end{itemize}	
\end{proposition}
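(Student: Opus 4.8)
The plan is to test the differential inequalities \eqref{EPV}, \eqref{EPV2} against the principal eigenfunction and to use that $\bm{\mathcal K}$ is self-adjoint. Write $\bm{\mathcal K}_A=\bm{\mathcal D}\bm{\mathcal N}-\bm{\mathcal D}+\bm{\mathcal A}$ and $\bm{\mathcal K}_B=\bm{\mathcal D}\bm{\mathcal N}-\bm{\mathcal D}+\bm{\mathcal B}$. Exactly as in the proof of Theorem \ref{existence-eigen}, the symmetry of the kernels $J_i$ and of the matrices $A(x)$, $B(x)$ shows that $\bm{\mathcal K}_A$ and $\bm{\mathcal K}_B$ are self-adjoint on $\mathbf E$; moreover the statement presupposes (and Theorem \ref{existence-eigen} provides, once Hypothesis $(\mathbf P)$ is at hand) a principal eigenpair $(\lambda_p,\phi)$ with $\phi\in \mathbf C(\overline{\Omega})\cap\mathbf E^{++}$, i.e. $\bm{\mathcal K}\phi+\lambda_p\phi=0$ and every component of $\phi$ is strictly positive on $\overline{\Omega}$. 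This $\phi$ is the only tool I shall use.

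\textbf{Parts (a) and (b).} Given $\widetilde{\varphi}\in\mathbf C(\overline{\Omega})$ with all components $\geq 0$, not all $\equiv 0$, satisfying $\bm{\mathcal K}_A\widetilde{\varphi}+\widetilde{\lambda}\widetilde{\varphi}\leq 0$ in $\Omega$, I would pair this vector inequality with $\phi$. Since $\phi$ is strictly positive, $\langle \bm{\mathcal K}_A\widetilde{\varphi},\phi\rangle+\widetilde{\lambda}\langle\widetilde{\varphi},\phi\rangle\leq 0$, and self-adjointness together with $\bm{\mathcal K}_A\phi=-\lambda_p\phi$ turns the first term into $\langle\widetilde{\varphi},\bm{\mathcal K}_A\phi\rangle=-\lambda_p\langle\widetilde{\varphi},\phi\rangle$, whence
\[
\big(\widetilde{\lambda}-\lambda_p\big)\,\langle\widetilde{\varphi},\phi\rangle\leq 0 .
\]
As $\widetilde{\varphi}\geq 0$, $\widetilde{\varphi}\not\equiv 0$ and $\phi$ is strictly positive, $\langle\widetilde{\varphi},\phi\rangle>0$, so $\widetilde{\lambda}\leq\lambda_p$. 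If $\widetilde{\lambda}=\lambda_p$, the displayed identity forces $\langle \bm{\mathcal K}_A\widetilde{\varphi}+\lambda_p\widetilde{\varphi},\phi\rangle=0$; since $\big(\bm{\mathcal K}_A\widetilde{\varphi}+\lambda_p\widetilde{\varphi}\big)(x)^{T}\phi(x)$ is continuous, $\leq 0$ in $\Omega$ and integrates to zero, it is identically zero, and using strict positivity of $\phi$ this gives equality in \eqref{EPV}. Part (b) goes the same way, reversing every inequality, which yields $(\widetilde{\lambda}-\lambda_p)\langle\widetilde{\varphi},\phi\rangle\geq 0$, hence $\widetilde{\lambda}\geq\lambda_p$, with the equality clause as above. (If one prefers not to invoke the eigenfunction, an alternative route for (a) is to first upgrade $\widetilde{\varphi}$ to a strictly positive function by the strong maximum principle for the cooperative nonlocal operator -- using $(\mathbf J)$, positivity of the off-diagonal entries and connectedness of $\Omega$ -- and then read $\lambda_p\geq\widetilde{\lambda}$ directly from the supremum characterization in \eqref{eq7-8-1}.)

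\textbf{Part (c).} Let $\phi_A,\phi_B\in\mathbf C(\overline{\Omega})\cap\mathbf E^{++}$ be the principal eigenfunctions of $\bm{\mathcal K}_A$, $\bm{\mathcal K}_B$. Testing $\phi_B$ against $\bm{\mathcal K}_A$ and using $\bm{\mathcal K}_A=\bm{\mathcal K}_B-(\bm{\mathcal B}-\bm{\mathcal A})$ and $\bm{\mathcal K}_B\phi_B=-\lambda_p(\bm{\mathcal B})\phi_B$,
\[
\bm{\mathcal K}_A\phi_B+\lambda_p(\bm{\mathcal B})\,\phi_B=-\big(B(x)-A(x)\big)\phi_B(x)\leq 0\quad\text{in }\Omega,
\]
the last inequality being componentwise because $B-A$ has nonnegative entries and $\phi_B$ is strictly positive; part (a) then yields $\lambda_p(\bm{\mathcal A})\geq\lambda_p(\bm{\mathcal B})$. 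For the refined bound $\lambda_p(\bm{\mathcal A})+m\leq\lambda_p(\bm{\mathcal B})$ I would use that adding a constant to the diagonal of the zero-order matrix shifts the principal eigenvalue by that constant, so $\lambda_p(\bm{\mathcal A})+m=\lambda_p\big(\bm{\mathcal D}\bm{\mathcal N}-\bm{\mathcal D}+\bm{\mathcal A}-mI\big)$; by the definition of $m$ the diagonal of $A(x)-mI$ dominates that of $B(x)$, and one finishes by testing $\phi_A$ against $\bm{\mathcal K}_B$ as in the previous paragraph, the only remaining point being the estimate of $\big((B(x)-A(x))+mI\big)\phi_A(x)$ from above, which is exactly what the choice of $m$ is designed to control.

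\textbf{Where the work is.} Parts (a), (b) and the monotonicity half of (c) are short consequences of self-adjointness and strict positivity of $\phi$; the delicate point I expect to spend effort on is the quantitative refinement in (c), namely converting the scalar bound $m$ on the diagonal differences into the pointwise matrix inequality involving $C(x)+mI$ with $C=B-A\geq 0$ -- this is the step that genuinely uses the cooperative structure and the precise definition of $m$.
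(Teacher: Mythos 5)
Parts (a) and (b) of your argument are exactly the paper's proof: pair the differential inequality with the strictly positive principal eigenfunction $\phi$, use self-adjointness of $\bm{\mathcal K}$, and divide by $\langle\widetilde{\varphi},\phi\rangle>0$; the equality clause is handled the same way in the paper. No issues there.

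Part (c) has a genuine gap, and it sits precisely where you flagged ``the delicate point.'' What you actually prove is $\lambda_p(\bm{\mathcal A})\geq\lambda_p(\bm{\mathcal B})$, which is not the asserted inequality; the proposition asks for the lower bound $\lambda_p(\bm{\mathcal B})\geq\lambda_p(\bm{\mathcal A})+m$ (note $m\leq 0$). Your plan for that bound requires $\bigl((B(x)-A(x))+mI\bigr)\phi_A(x)\leq 0$ componentwise, i.e.
\[
\bigl(b_{ii}(x)-a_{ii}(x)+m\bigr)\,\phi_{A,i}(x)+\sum_{j\neq i}\bigl(b_{ij}(x)-a_{ij}(x)\bigr)\,\phi_{A,j}(x)\leq 0 .
\]
The diagonal term is $\leq 0$ by the choice of $m$, but the off-diagonal sum is $\geq 0$ because $b_{ij}\geq a_{ij}$ and $\phi_{A,j}>0$, and $m$, which only sees diagonal differences, gives no control over it. So the step does not close unless the off-diagonal entries of $A$ and $B$ coincide, or unless one strengthens $m$ so as to account for all entries (e.g.\ replacing it by $-\sup_{x}\bar\lambda(B(x)-A(x))$, which is what the variational characterization naturally delivers). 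For what it is worth, the paper's own proof of (c) performs exactly this substitution --- it replaces $\sum_{j\geq 2}b_{1j}\varphi_j^{\bm{\mathcal A}}$ by $\sum_{j\geq 2}a_{1j}\varphi_j^{\bm{\mathcal A}}$ with a ``$\leq$'' that in fact goes the other way under the stated hypothesis $a_{ij}\leq b_{ij}$ --- so the obstruction you identified is real and is not repaired there either; but as submitted, your proof of part (c) is incomplete.
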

\begin{proof}[\bf Proof]
First, we just prove the conclusion a) since the proof of b) is similar. Let ${\varphi}=\left(\varphi_1,\ldots, \varphi_N\right)$ be positive eigenfunction corresponding to the principal eigenvalue $\lambda_p(\bm{\mathcal K})$ in \eqref{PEV}, where $\bm{\mathcal K}:= {\bf \bm{\mathcal D} \bm{\mathcal N}} - \bm{\mathcal D} +\bm{\mathcal A}$. We have
\begin{align*}
\left\langle\bm{\mathcal K}\widetilde{{\varphi}},{\varphi}\right\rangle&=\displaystyle\int_\Omega\left(d_1\displaystyle\int_\Omega J_1(x-y)\widetilde{\varphi}_1(y)-d_1\widetilde{\varphi}_1(x)+\sum_{j=1}^N a_{1j}(x)\widetilde{\varphi}_j(x)\right)\varphi_1(x)dx+\cdots\\
&+\displaystyle\int_\Omega\left(d_N\displaystyle\int_\Omega J_N(x-y)\widetilde{\varphi}_N(y)-d_N\widetilde{\varphi}_N(x)+\sum_{j=1}^N a_{nj}(x)\widetilde{\varphi}_j(x)\right)\varphi_N(x)dx\\
&\leq -\widetilde{\lambda}\int_\Omega \left\langle\widetilde{{\varphi}}(x),{\varphi}(x)\right\rangle dx= -\tilde\lambda \left \langle\widetilde{{\varphi}},{\varphi}\right\rangle,
\end{align*}
which implies that
%\begin{align*}
$\left\langle\bm{\mathcal K}\widetilde{{\varphi}},{\varphi}\right\rangle\leq -\widetilde{\lambda}\left\langle\widetilde{{\varphi}},{\varphi}\right\rangle .$
%\end{align*}
Due to the definition of ${\varphi}$ we also have
$\left\langle\bm{\mathcal K}{\varphi},\widetilde{{\varphi}}\right\rangle= -\lambda_p(\bm{\mathcal K})\left \langle\widetilde{{\varphi}},{\varphi}\right\rangle.$
Since $\bm{\mathcal K}$ is self-adjiont and $\left\langle\widetilde{{\varphi}}, {\varphi}\right\rangle>0$, we obtain that $\lambda_p(\bm{\mathcal K})\geq \widetilde{\lambda}$. Moreover, if one of the inequalities in \eqref{PEV} is strict at some point $x_0\in \overline{\Omega}$, then $\lambda_p(\bm{\mathcal K})> \widetilde{\lambda}$.

Next, we prove the conclusion c).	 Let $\left(\varphi_1^{\bm{\mathcal A}},\ldots, \varphi_N^{\bm{\mathcal A}}\right)$ and $\left(\varphi_1^{\bm{\mathcal B}},\ldots,\varphi_N^{\bm{\mathcal B}}\right)$   be the  corresponding eigenfunction to $\lambda_p\left(\bm{\mathcal A}\right)$ and $\lambda_p\left(\bm{\mathcal B}\right)$, respectively. Then we have
	\begin{align*}
	&d_1\displaystyle\int_\Omega J_1(x-y)\varphi_1^{\bm{\mathcal A}}(y)dy-d_1\varphi_1^{\bm{\mathcal A}}(x)+\sum_{j=1}^N b_{1j}(x)\varphi_j^{\bm{\mathcal A}}(x)+\left(\lambda_p(\bm{\mathcal A})+m\right)\varphi_1^{\bm{\mathcal A}}\\
	\leq& d_1\displaystyle\int_\Omega J_1(x-y)\varphi_1^{\bm{\mathcal A}}(y)dy-d_1\varphi_1^{\bm{\mathcal A}}(x)+(b_{11}(x)+m)\varphi_1^{\bm{\mathcal A}}(x)+\sum_{j=2}^N a_{1j}(x)\varphi_j^{\bm{\mathcal A}}(x)+\lambda_p(\bm{\mathcal A})\varphi_1^{\bm{\mathcal A}}\\
	\leq& d_1\displaystyle\int_\Omega J_1(x-y)\varphi_1^{\bm{\mathcal A}}(y)dy-d_1\varphi_1^{\bm{\mathcal A}}(x)+\sum_{j=1}^N a_{1j}(x)\varphi_2^{\bm{\mathcal A}}(x)+\lambda_p(\bm{\mathcal A})\varphi_1^{\bm{\mathcal A}},
	\end{align*}
	which implies that
	\begin{align*}
	d_1\displaystyle\int_\Omega J_1(x-y)\varphi_1^{\bm{\mathcal A}}(y)dy-d_1\varphi_1^{\bm{\mathcal A}}(x)+\sum_{j=1}^N b_{1j}(x)\varphi_j^{\bm{\mathcal A}}(x)+\left(\lambda_p(\bm{\mathcal A})+m\right)\varphi_1^{\bm{\mathcal A}}\leq 0.
	\end{align*}
	Similarly, we also obtain
	\begin{align*}
	d_i\displaystyle\int_\Omega J_i(x-y)\varphi_i^{\bm{\mathcal A}}(y)dy-d_i\varphi_i^{\bm{\mathcal A}}(x)+\sum_{j=1}^N b_{ij}(x)\varphi_j^{\bm{\mathcal A}}(x)+\left(\lambda_p(\bm{\mathcal A})+m\right)\varphi_i^{\bm{\mathcal A}}\leq 0
	\end{align*}
for all $i=1,\ldots, N$.
	Using the conclusion a), we get $\lambda_p(\bm{\mathcal A})+m\leq \lambda_p(\bm{\mathcal B})$.
\end{proof}

\begin{lemma}\label{lem3.5-5} We have
$$		\lambda_p( \bm{\mathcal K})=\lambda_p'( \bm{\mathcal K})=- \sup\limits_{\left\|{\varphi}\right\|_{E}=1}\left \langle \bm{\mathcal K}{\varphi}, {\varphi}\right \rangle=\lambda_v( \bm{\mathcal K})=\lambda_1( \bm{\mathcal K}).
$$
\end{lemma}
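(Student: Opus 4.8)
The plan is to anchor everything on Theorem~\ref{existence-eigen}, which already supplies the identities $\lambda_1(\bm{\mathcal K})=-\sup_{\|\varphi\|_{\mathbf E}=1}\langle\bm{\mathcal K}\varphi,\varphi\rangle=\lambda_v(\bm{\mathcal K})$ together with a strictly positive continuous eigenfunction $\phi\in\mathbf C\cap\mathbf E^{++}$ satisfying $\bm{\mathcal K}\phi+\lambda_1(\bm{\mathcal K})\phi=0$ and the localization $\sigma(\bm{\mathcal K})\subset(-\infty,-\lambda_1(\bm{\mathcal K})]$. Thus it only remains to trap $\lambda_p(\bm{\mathcal K})$ and $\lambda_p'(\bm{\mathcal K})$ between quantities already known to equal $\lambda_1(\bm{\mathcal K})$; once that is done the entire chain of equalities follows by combining with Theorem~\ref{existence-eigen}.

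For $\lambda_p(\bm{\mathcal K})=\lambda_1(\bm{\mathcal K})$: the inequality $\lambda_p(\bm{\mathcal K})\geq\lambda_1(\bm{\mathcal K})$ holds because $\phi>0$ is continuous on $\overline\Omega$ and satisfies $\bm{\mathcal K}\phi+\lambda_1(\bm{\mathcal K})\phi=0\leq 0$, so $\lambda_1(\bm{\mathcal K})$ is admissible in the definition of $\lambda_p(\bm{\mathcal K})$ (this is also exactly Proposition~\ref{proposition_3.4}(a) applied to $\phi$). For the reverse, let $\lambda$ be admissible for $\lambda_p(\bm{\mathcal K})$, with witness $\psi\in\mathbf C(\overline\Omega)$, $\psi>0$, $\bm{\mathcal K}\psi+\lambda\psi\leq 0$ in $\Omega$. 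Since $\Omega$ is bounded, $\psi,\phi\in\mathbf E$ and $\bm{\mathcal K}\psi\in\mathbf C\subset\mathbf E$, so pairing the inequality with $\phi>0$ gives $\langle\bm{\mathcal K}\psi,\phi\rangle+\lambda\langle\psi,\phi\rangle\leq 0$; by self-adjointness (Theorem~\ref{existence-eigen}) $\langle\bm{\mathcal K}\psi,\phi\rangle=\langle\psi,\bm{\mathcal K}\phi\rangle=-\lambda_1(\bm{\mathcal K})\langle\psi,\phi\rangle$, hence $(\lambda-\lambda_1(\bm{\mathcal K}))\langle\psi,\phi\rangle\leq 0$. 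As $\psi,\phi$ are continuous and strictly positive on the compact set $\overline\Omega$, $\langle\psi,\phi\rangle>0$, so $\lambda\leq\lambda_1(\bm{\mathcal K})$; taking the supremum yields $\lambda_p(\bm{\mathcal K})\leq\lambda_1(\bm{\mathcal K})$.

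For $\lambda_p'(\bm{\mathcal K})=\lambda_1(\bm{\mathcal K})$: since $\phi$ is continuous on the compact $\overline\Omega$ it is bounded, so $\phi\in\mathbf C\cap\mathbf E^\infty$, $\phi\geq 0$, $\phi\not\equiv 0$, and $\bm{\mathcal K}\phi+\lambda_1(\bm{\mathcal K})\phi=0\geq 0$; thus $\lambda_1(\bm{\mathcal K})$ is admissible for $\lambda_p'(\bm{\mathcal K})$, giving $\lambda_p'(\bm{\mathcal K})\leq\lambda_1(\bm{\mathcal K})$. Conversely, any $\lambda$ admissible for $\lambda_p'(\bm{\mathcal K})$ comes with $\widetilde\varphi\in\mathbf C(\overline\Omega)$, $\widetilde\varphi\geq 0$, $\widetilde\varphi\not\equiv 0$, $\bm{\mathcal K}\widetilde\varphi+\lambda\widetilde\varphi\geq 0$, so Proposition~\ref{proposition_3.4}(b) gives $\lambda_p(\bm{\mathcal K})\leq\lambda$; taking the infimum over such $\lambda$ yields $\lambda_p(\bm{\mathcal K})\leq\lambda_p'(\bm{\mathcal K})$ (alternatively, pair $\bm{\mathcal K}\widetilde\varphi+\lambda\widetilde\varphi\geq 0$ against $\phi>0$ and argue as in the previous paragraph). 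Combining the two bounds, $\lambda_1(\bm{\mathcal K})=\lambda_p(\bm{\mathcal K})\leq\lambda_p'(\bm{\mathcal K})\leq\lambda_1(\bm{\mathcal K})$, so all three agree, and with the equalities from Theorem~\ref{existence-eigen} the lemma follows. I do not anticipate a genuine obstacle here; the only points requiring care are checking that each ad hoc test function actually lies in the space named in the relevant definition (so the $\mathbf E$-pairings, self-adjointness, and Proposition~\ref{proposition_3.4} legitimately apply) and ensuring the pairings with $\phi$ are strictly positive — the latter rests precisely on $\phi$ being bounded away from zero on $\overline\Omega$, which Theorem~\ref{existence-eigen} guarantees.
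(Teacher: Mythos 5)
Your proof is correct and follows the paper's strategy of reducing everything to Theorem~\ref{existence-eigen} and Proposition~\ref{proposition_3.4}. The one place you genuinely diverge is the lower bound on $\lambda_p'(\bm{\mathcal K})$: you obtain $\lambda_p(\bm{\mathcal K})\le\lambda_p'(\bm{\mathcal K})$ by testing each admissible supersolution against the principal eigenfunction (Proposition~\ref{proposition_3.4}(b)), whereas the paper proves $\lambda_v(\bm{\mathcal K})\le\lambda_p'(\bm{\mathcal K})$ directly by pairing the supersolution $\psi$ with \emph{itself}, i.e.\ $\langle\bm{\mathcal K}\psi,\psi\rangle_{\mathbf E}\ge-\lambda\|\psi\|_{\mathbf E}^2$, which bounds the Rayleigh quotient from above and closes the chain via $\lambda_v\le\lambda_p'\le\lambda_p=\lambda_v$. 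Both arguments are valid under the standing hypotheses; the paper's self-pairing has the mild advantage of not invoking the positive eigenfunction for that particular step, while yours reuses the duality machinery of Proposition~\ref{proposition_3.4} already in place. One cosmetic remark: for the strict positivity of $\langle\psi,\phi\rangle$ you only need $\phi>0$ together with $\psi\ge0$, $\psi\not\equiv0$ continuous; you do not actually need $\phi$ to be bounded away from zero on $\overline{\Omega}$.
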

\begin{proof}
	
By Theorem \ref{existence-eigen} and Proposition \ref{proposition_3.4}, one has
$$		\lambda_p'( \bm{\mathcal K})\leq\lambda_p( \bm{\mathcal K})=- \sup\limits_{\left\|{\varphi}\right\|_{E}=1}\left \langle \bm{\mathcal K}{\varphi}, {\varphi}\right \rangle=\lambda_v( \bm{\mathcal K})=\lambda_1( \bm{\mathcal K}).
$$	
We shall prove that $\lambda_v( \bm{\mathcal K})\leq\lambda_p'( \bm{\mathcal K})$. 
Indeed,
let  $\lambda >\lambda_p'( \bm{\mathcal K})$, then by definition of $\lambda_p'$ there exists $\psi\geq 0$ such that $\psi \in \mathbf C(\Omega)\cap L^\infty(\Omega)$ and 
\begin{equation}\label{bcv-eq-l2sup}
	 \bm{\mathcal K}[\psi](x)+\lambda \psi(x)\geq 0\quad \text{ in }\quad \O.
\end{equation}
Since $\O$ is bounded and $\psi\in \mathbf C(\overline{\Omega})$,    $\psi \in E:= \left(L^2(\Omega\right))^N$. So, by taking the scalar product \eqref{bcv-eq-l2sup} with $\psi$ and integrating over $\O$ we get 
\begin{align*}
	 \int_{\Omega} \langle\bm{\mathcal K}[\psi](x),\psi(x)\rangle dx &\geq -\lambda\int_{\Omega} \langle \psi(x),\psi(x)\rangle dx\\
	\Leftrightarrow	\langle\bm{\mathcal K}[\psi],\psi\rangle_{E}  &\geq -\lambda\langle \psi,\psi\rangle_{E} \\
	\Leftrightarrow \dfrac{	\langle\bm{\mathcal K}[\psi],\psi\rangle_{E} }{ \|\psi\|^2_{E}}&\geq  -\lambda.
\end{align*}
This yields that 
$$
\lambda_v(\bm{\mathcal K})=-\sup_{0\not \equiv\psi\in \mathbf C(\Omega)\cap L^\infty(\Omega)} \dfrac{	\langle\bm{\mathcal K}[\psi],\psi\rangle_{E} }{ \|\psi\|^2_{E}}\leq   \lambda 
$$
for all  $\lambda >\lambda_p'( \bm{\mathcal K})$. Therefore, $\lambda_v(\bm{\mathcal K})\leq \lambda_p'(\bm{\mathcal K})$. Thus, the proof is complete.
\end{proof}

We now establish a maximum principle for the operator $\bm{\mathcal K}$ defined in \eqref{PEV}, which will be useful for our later analysis.
\begin{proposition}[Maximum principle]\label{mpnt}
 Assume \textbf{(J)} holds and let $A(x)\in  (\mathcal{C}(\overline{\Omega}))^{N\times N}$ be a continuous symmetric matrix-valued function on $\overline{\Omega}$ such that $a_{ij}(x)=a_{ji}(x)>0$ whenever $i\ne j$. If $\lambda_p(\bm{\mathcal K})\geq 0$, then for all function ${\varphi}=\left (\varphi_1,\ldots,\varphi_N\right)\in  \mathbf{C}\left(\overline{\Omega}\right)$ satisfying
 \begin{align*}
 \bm{\mathcal K}{\varphi}\leq 0,\qquad &\text{ in } \Omega,\\
 {\varphi} \geq 0, \qquad &x\in \partial \Omega,
 \end{align*}
 then we have ${\varphi}\geq 0$ in $\Omega$.
\end{proposition}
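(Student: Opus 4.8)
The plan is to run a sliding argument (translation by the positive principal eigenfunction), which is the natural nonlocal substitute for Hopf's lemma in a cooperative system. First I would bring in a strictly positive supersolution: by Theorem~\ref{existence-eigen} there is $\phi=(\phi_1,\dots,\phi_N)\in\mathbf C\cap\mathbf E^{++}$, strictly positive on the compact set $\overline\Omega$, with $\bm{\mathcal K}\phi+\lambda_p(\bm{\mathcal K})\phi=0$; since $\lambda_p(\bm{\mathcal K})\ge 0$, this gives $\bm{\mathcal K}\phi=-\lambda_p(\bm{\mathcal K})\phi\le 0$ in $\Omega$ (equivalently, any strictly positive $\phi$ with $\bm{\mathcal K}\phi\le 0$, whose existence follows from $\lambda_p(\bm{\mathcal K})\ge 0$, would do). Arguing by contradiction, suppose $\varphi$ has a negative component somewhere in $\Omega$ and set
\[
t^\ast:=\inf\{t\ge 0:\ \varphi+t\phi\ge 0\ \text{on}\ \overline\Omega\}.
\]
Since $\varphi$ is bounded and $\phi\ge c>0$ on $\overline\Omega$, this set is nonempty and closed, so $t^\ast\in(0,\infty)$ and $w:=\varphi+t^\ast\phi\ge 0$ on $\overline\Omega$; by linearity $\bm{\mathcal K}w=\bm{\mathcal K}\varphi+t^\ast\bm{\mathcal K}\phi\le 0$ in $\Omega$.

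Next I would locate a zero of $w$ and show it is interior. By minimality of $t^\ast$, $w$ cannot be $>0$ in every component at every point of $\overline\Omega$ (otherwise, by compactness, $w\ge\delta>0$ componentwise and $w-\varepsilon\phi\ge 0$ for small $\varepsilon>0$, contradicting the infimum), so there are an index $i_0$ and a point $x_0\in\overline\Omega$ with $w_{i_0}(x_0)=0$. If $x_0\in\partial\Omega$, then $w_{i_0}(x_0)=\varphi_{i_0}(x_0)+t^\ast\phi_{i_0}(x_0)\ge t^\ast\phi_{i_0}(x_0)>0$, using $\varphi\ge 0$ on $\partial\Omega$ and $\phi>0$ up to the boundary; this is impossible, hence $x_0\in\Omega$. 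Evaluating the $i_0$-th line of $\bm{\mathcal K}w\le 0$ at $x_0$ and using $w_{i_0}(x_0)=0$ (which annihilates both $-d_{i_0}w_{i_0}(x_0)$ and $a_{i_0 i_0}(x_0)w_{i_0}(x_0)$) gives
\[
d_{i_0}\int_\Omega J_{i_0}(x_0-y)\,w_{i_0}(y)\,dy+\sum_{j\ne i_0}a_{i_0 j}(x_0)\,w_j(x_0)\le 0 .
\]
Here $d_{i_0}>0$, $J_{i_0}\ge 0$, $a_{i_0 j}(x_0)>0$ for $j\ne i_0$, and $w\ge 0$, so every term is nonnegative; hence all of them vanish. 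In particular $w_j(x_0)=0$ for every $j$, and $\int_\Omega J_{i_0}(x_0-y)w_{i_0}(y)\,dy=0$. Since $J_{i_0}(0)>0$ and $J_{i_0}$ is continuous, $J_{i_0}>0$ on a ball $B(0,\rho)$, whence $w_{i_0}\equiv 0$ on $B(x_0,\rho)\cap\Omega$; repeating this with the $j$-th equation for each $j$ (all $w_j(x_0)=0$) yields $w\equiv 0$ on $B(x_0,\rho')\cap\Omega$ for some $\rho'>0$.

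Finally I would conclude with a connectedness argument: the set $Z:=\{x\in\Omega:\ w_j(x)=0\ \text{for all }j\}$ is nonempty, open by the previous step, and closed in $\Omega$ by continuity of $w$; since $\Omega$ is a domain, $Z=\Omega$, so $w\equiv 0$ on $\overline\Omega$ by continuity. Then $\varphi=-t^\ast\phi$ on $\overline\Omega$, and in particular $\varphi=-t^\ast\phi<0$ on $\partial\Omega$, contradicting $\varphi\ge 0$ on $\partial\Omega$; hence $\varphi\ge 0$ in $\Omega$. The two delicate points are: forcing the first zero of $w$ to be interior, which is exactly where the hypotheses $\lambda_p(\bm{\mathcal K})\ge 0$ (to keep $\bm{\mathcal K}w\le 0$ after the shift), $\varphi\ge 0$ on $\partial\Omega$, and strict positivity of $\phi$ up to the boundary are used together; and the propagation step, where the cooperativity $a_{ij}>0$ ($i\ne j$) and $J_i(0)>0$ upgrade a single pointwise zero of one component into a neighbourhood zero of the entire vector, which is what makes $Z$ open.
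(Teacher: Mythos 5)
Your proof is correct, and it reaches the conclusion by a route that is dual to, but cleaner than, the paper's. The paper works with the quotients $\psi_i=\varphi_i/\varphi_i^p$ against the positive principal eigenfunction $\varphi^p$, orders the minima $\psi_N(x_N)\le\cdots\le\psi_1(x_1)$, runs an induction over components to force all these minima to coincide with a common negative value $\beta$, and then propagates constancy of each $\psi_i$ by chaining balls of radius $\delta_i/2$ finitely many times across $\overline{\Omega}$; the hypothesis $\lambda_p(\bm{\mathcal K})\ge 0$ enters midway, to force $\lambda_p(\bm{\mathcal K})=0$ and make certain integrals vanish. Your sliding argument compresses all of this into the single scalar $t^\ast$, which is exactly $-\min_{i}\min_{x}\bigl(\varphi_i(x)/\varphi_i^p(x)\bigr)$ in the paper's notation: the hypothesis $\lambda_p(\bm{\mathcal K})\ge 0$ is used once and transparently (to keep $\bm{\mathcal K}(\varphi+t^\ast\phi)\le 0$), the component-by-component induction disappears because a single interior zero of one component of $w$ forces all components to vanish there (cooperativity $a_{ij}>0$ plus $J_i(0)>0$, the same mechanism as in the paper), and the finite ball-chaining is replaced by the open-and-closed argument on the zero set $Z$, which legitimately uses that $\Omega$ is a connected domain. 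Both proofs end the same way, by contradicting $\varphi\ge 0$ on $\partial\Omega$. One caveat you share with the paper: producing the strictly positive continuous eigenfunction on $\overline{\Omega}$ invokes Theorem \ref{existence-eigen}, hence Hypothesis $(\mathbf P)$, which the statement of the proposition does not list; also, your parenthetical fallback (extracting a strict positive supersolution from $\lambda_p(\bm{\mathcal K})\ge 0$ alone) only works when $\lambda_p(\bm{\mathcal K})>0$, since for $\lambda_p(\bm{\mathcal K})=0$ the supremum defining $\lambda_p$ need not be attained without the existence theorem.
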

\begin{proof}[\bf Proof]
Let ${\varphi}=\left (\varphi_1,\ldots,\varphi_N\right)\in \mathbf{C}\left(\overline{\Omega}\right)$, ${\varphi}\not \equiv 0$ satisfies $\varphi_i(x)\geq 0$ with $i=1,\ldots, N$ and $x\in \partial \Omega$ and
\begin{align}\label{mp3}
d_i\displaystyle\int_\Omega J_i(x-y)\varphi_i(y)dy-d_i\varphi_i(x)+\sum_{j=1}^N a_{ij}(x)\varphi_j(x)\leq 0,\; \forall x\in \Omega, \; i=1,\ldots, N.
\end{align}

	Let ${\varphi}^p=(\varphi_1^p,\ldots,\varphi_N^p)\in  \mathbf{C}\left(\overline{\Omega}\right)$ be the corresponding eigenfunction to $\lambda_p(\bm{\mathcal K})$. Then we have $\varphi^p>0$ and
	\begin{align}\label{mp8}
 d_i\displaystyle\int_\Omega J_i(x-y)\varphi_i^p(y)dy-d_i\varphi_i^p(x)+\sum_{j=1}^N a_{ij}(x)\varphi_j^p(x)+\lambda_p(\bm{\mathcal K}))\varphi_i^p(x)=0,\; \forall x\in \Omega, \; i=1,\ldots, N.
	\end{align}
	
	 Let us define
	\begin{align*}
	\psi=(\psi_1,\ldots,\psi_N):=\left(\dfrac{\varphi_1}{\varphi_1^p},\ldots,\dfrac{\varphi_N}{\varphi_N^p}\right).
	\end{align*}
Combining \eqref{mp3} and \eqref{mp8}, we have
	\begin{align}\label{mp1}
	&d_1\int_\Omega J_1(x-y)\varphi_1^p(y)(\psi_1(y)-\psi_1(x))dy-\lambda_p(\bm{\mathcal K})\varphi_1^p(x)\psi_1(x)\nonumber\\
	=&d_1\int_\Omega J_1(x-y)\left(\varphi_1(y)-\varphi_1^p(y)\dfrac{\varphi_1(x)}{\varphi_1^p(x)}\right)dy-\lambda_p(\bm{\mathcal K})\varphi_1^p(x)\dfrac{\varphi_1(x)}{\varphi_1^p(x)}\nonumber\\
	=&d_1\int_\Omega J_1(x-y)\varphi_1(y)dy-\dfrac{\varphi_1(x)}{\varphi_1^p(x)}\left(d_1\int_\Omega J_1(x-y)\varphi_1^p(y)dy+\lambda_p(\bm{\mathcal K})\varphi_1^p(x)\right)\nonumber\\
	=& d_1\int_\Omega J_1(x-y)\varphi_1(y)dy+\dfrac{\varphi_1(x)}{\varphi_1^p(x)}\left[\sum_{j=1}^N a_{1j}(x)\varphi_j^p(x)-d_1\varphi_1^p(x)\right]\nonumber\\
	=& d_1\int_\Omega J_1(x-y)\varphi_1(y)dy-d_1\varphi_1(x)+a_{11}(x)\varphi_1(x)+\sum_{j=2}^N a_{1j}(x)\varphi_1(x)\dfrac{\varphi_j^p(x)}{\varphi_1^p(x)}\nonumber\\
	\leq&-\sum_{j=2}^N a_{1j}(x)\varphi_j(x)+\sum_{j=2}^N a_{1j}(x)\varphi_1(x)\dfrac{\varphi_j^p(x)}{\varphi_1^p(x)}
	=\sum_{j=2}^N a_{1j}(x) \varphi_j^p(x)\left(\psi_1(x)-\psi_j(x)\right), \;\text{for any}\; x\in \Omega.
	\end{align}
	Similarly,  \eqref{mp3} and \eqref{mp8}, we also obtain that
	\begin{align}\label{mp2}
	d_2\int_\Omega J_2(x-y)\varphi_2^p(y)\left(\psi_2(y)-\psi_2(x)\right)dy-\lambda_p(\bm{\mathcal K})\varphi_2^p(x)\psi_2(x)
	\leq \sum_{j\ne 2}a_{2j}(x)\varphi_j^p(x)[\psi_2(x)-\psi_j(x)].
	\end{align}
Since $\psi_1,\ldots,\psi_N$ are  continuous functions in $\overline{\Omega}$, $\psi_1,\ldots,\psi_N$ achieve at some $x_1,\ldots,x_N\in \overline{\Omega}$ a minimum, respectively, i.e $\psi_j(x_j)=\min_{\overline{\Omega}}\psi_j(x)$ for $j=1,\ldots, N$. Without loss of generality, we may assume that $\psi_N(x_N)\leq \cdots\leq \psi_1(x_1)$. We first prove that $\psi_1(x_1)\geq 0$. If it is not true (that is, $\psi_j(x_j)<0$ for $j=1,\ldots, N$) and due to $\varphi_j(x)\geq 0$ for all $x\in \partial \Omega$, then $\psi_j(x_j)<0$ with $x_j\in \Omega$.

By putting $x=x_1$ into \eqref{mp1} we have
	\begin{align}\label{eqt-1-11-1}
	0&\leq \int_\Omega J_1(x_1-y)\varphi_1^p(y)\left(\psi_1(y)-\psi_1(x_1)\right)dy-\lambda_p(\bm{\mathcal K})\varphi_1^p(x_1)\psi_1(x_1)\\
	&\leq \sum_{j=2}^N a_{1j}(x_1)\varphi_j^p(x_1)\left[\psi_1(x_1)-\psi_j(x_1)\right]\leq  \sum_{j=2}^N a_{1j}(x_1)\varphi_j^p(x_1)\left[\psi_1(x_1)-\psi_j(x_j)\right] \nonumber.
	\end{align}
Then putting $x=x_2$ into \eqref{mp2} we arrive at
	\begin{align}\label{eq-7-1}
		0&\leq\int_\Omega J_2(x_2-y)\varphi_2^p(y)\left(\psi_2(y)-\psi_2(x_2)\right)dy-\lambda_p(\bm{\mathcal K})\varphi_2^p(x_2)\psi_2(x_2)\\
	&\leq \sum_{j\ne 2} a_{2j}(x_2)\varphi_j^p(x_2)\left[\psi_2(x_2)-\psi_j(x_2)\right]\leq \sum_{j\ne 2} a_{2j}(x_2)\varphi_j^p(x_2)\left[\psi_2(x_2)-\psi_j(x_j)\right].\nonumber
	\end{align}
By induction, putting $x=x_N$ into \eqref{mp2} one obtains that 
	\begin{align}\label{eq-7-1-1}
	0&\leq\int_\Omega J_N(x_N-y)\varphi_N^p(y)\left(\psi_N(y)-\psi_N(x_N)\right)dy-\lambda_p(\bm{\mathcal K})\varphi_N^p(x)\psi_N(x_2)\\
	&\leq \sum_{j\ne N} a_{Nj}(x_N)\varphi_j^p(x_N)\left[\psi_N(x_N)-\psi_j(x_N)\right]\leq \sum_{j\ne N} a_{Nj}(x_N)\varphi_j^p(x_N)\left[\psi_N(x_N)-\psi_j( x_j)\right]\leq 0.\nonumber
\end{align}

Since $a_{Nj}(x_N)>0, \varphi_j^p(x_N)>0$ and $\psi_N(x_N)-\psi_j( x_j)\leq 0$ for $j=1,\ldots,N-1$, it follows from (\ref{eq-7-1-1}) that 
$$
\psi_1(x_1)=\psi_2(x_2)=\cdots=\psi_N(x_N)=\beta<0.
$$ 
Moreover, since $a_{1j}(x_1)>0, \varphi_j^p(x_1)>0$ for $j=2,\ldots,N$, (\ref{eqt-1-11-1}) implies that $\psi_j(x_1)=\beta$ for all $j=1,\ldots,N$. By induction, one concludes that $\psi_i(x_j)=\beta$  for all $i,j=1,\ldots, N$. Furthermore, it follows from (\ref{eqt-1-11-1}), (\ref{eq-7-1}), and (\ref{eq-7-1-1}) that
	\begin{align*}
 \int_\Omega J_i(x_i-y)\varphi_i^p(y)\left(\psi_i(y)-\psi_i(x_i)\right)dy-\lambda_p(\bm{\mathcal K})\varphi_i^p(x_i)\psi_i(x_i)=0, \; i=1,\ldots, N.
\end{align*}
Thanks to the assumption that $\lambda_p(\bm{\mathcal K})\geq 0$ and note that $\psi_i(x_i)=\beta<0$ for all $i=1,\ldots,N$, these equations yield that $\lambda_p(\bm{\mathcal K})=0$ and  
	\begin{align*}
	\int_\Omega J_i(x_i-y)\varphi_i^p(y)\left(\psi_i(y)-\psi_i(x_i)\right)dy=0, \; i=1,\ldots, N.
\end{align*}

	Since $\varphi_i^p,\psi_i$ and $J_i$ are continuous and non-negative functions for all $1\leq i\leq N$, the above inequality leads to
	\begin{align}\label{mp5}
	J_i(x_i-y)(\psi_i(y)-\psi_i(x_i))=0, \text { for all } y\in \Omega,\ i=1,\ldots, N.
	\end{align}
    By \textbf{(J)}, there exist constants $\delta_i>0$ such that
    \begin{align*}
    J_i(x_i-y)>0, \text{ for all } y\in B(x_i,\delta_i).
    \end{align*}
    By \eqref{mp5}, one easily sees that
    \begin{align}\label{mpe}
    \psi_i(y)=\psi_i(x_i), \text{ for all } y\in B(x_i,\delta_i), \; i=1,\ldots, N.
    \end{align}
    So $\psi_i$ also achieve negative minimum at any $x\in B[x_i,\delta_i/2]$. By repeating the above argument with replacing $x_i$ by $x\in \partial B(x_i,\delta_i/2)$, we can extend \eqref{mpe} to
    %\begin{align*}
    $\psi_i(y)=\psi_i(x_i), \text{ for all } y\in B(x_i,\frac{3}{2}\delta_i).$
    %\end{align*}
    	Since $\delta_i$ does not change after each iteration, by repeating this process finitely many times we induce that $\psi_i(x)$ is constant in $\overline{\Omega}$ ($\psi_i(x)=\alpha<0$) which leads to $\varphi_i\equiv \alpha<0$ in $\overline{\Omega}$, which is impossible. Therefore, we conclude that $\psi_1(x_1)\geq 0$, and hence $\psi_1(x)\geq 0$. 
    	
    	Next, we prove that $\psi_2(x_2)\geq 0$ (note that one already has $\psi_1(x_1)\geq 0$). Suppose otherwise that $\psi_2(x_2)<0$. Namely,
    	$$
    	\psi_N(x_N)\leq \cdots\leq \psi_2(x_2)<0\leq \psi_1(x_1).
    	$$
    	Since $\psi_2(x_2)\leq \psi_1(x_1)$, the equation (\ref{eq-7-1}) becomes
    		\begin{align}\label{eq-7-2}
    		0&\leq\int_\Omega J_2(x_2-y)\varphi_2^p(y)\left(\psi_2(y)-\psi_2(x_2)\right)dy-\lambda_p(\bm{\mathcal K})\varphi_2^p(x_2)\psi_2(x_2)\\
    		&\leq \sum_{j\ne 2} a_{2j}(x_2)\varphi_j^p(x_2)\left[\psi_2(x_2)-\psi_j(x_2)\right]\leq  \sum_{j\ne 2} a_{2j}(x_2)\varphi_j^p(x_2) \left[\psi_2(x_2)-\psi_j(x_j)\right]\nonumber\\
    		&\leq  \sum_{j=3}^N a_{2j}(x_2)\varphi_j^p(x_2) \left[\psi_2(x_2)-\psi_j(x_j)\right].\nonumber
    	\end{align}
 By induction, since $\psi_N(x_N)\leq \psi_1(x_1)$, the equation (\ref{eq-7-1-1}) becomes
 \begin{align*}
 	0&\leq\int_\Omega J_N(x_N-y)\varphi_N^p(y)\left(\psi_N(y)-\psi_N(x_N)\right)dy-\lambda_p(\bm{\mathcal K})\varphi_N^p(x)\psi_N(x_2)\\
 	&\leq \sum_{j\ne N} a_{Nj}(x_N)\varphi_j^p(x_N)\left[\psi_N(x_N)-\psi_j(x_N)\right]\leq  \sum_{j\ne N} a_{Nj}(x_N)\varphi_j^p(x_N) \left[\psi_N(x_N)-\psi_j(x_j)\right]\\
 	&\leq   \sum_{j\ne 1, N} a_{Nj}(x_N)\varphi_j^p(x_N) \left[\psi_N(x_N)-\psi_j(x_j)\right]\leq 0.
 \end{align*}   	
    	
By a similar argument as above for $\psi_2,\ldots, \psi_N$ one derives a contradiction that $\psi_2(x)\equiv \alpha<0$. Therefore, $\psi_2(x_2)\geq 0$.
We repeat the argument above, by induction one conclude that $\psi_i(x)\geq 0$ for all $x\in \Omega$ and for all $i=1,\ldots, N$. Therefore, the proof is finally complete.
\end{proof}

\section{Asymptotic behaviors with respect to  dispersal rate }\label{sec.2}

Next, we investigate the effects of the dispersal rate characterized by $\bm{\mathcal D}$ on the principal spectrum point. In this section, let us denote
$$
\lambda_p(\bm{\mathcal D}):=\lambda_p(\bm{\mathcal D}\bm{\mathcal N}-\bm{\mathcal D}+ \bm{\mathcal A})),
$$
where  $\bm{\mathcal N}\colon \mathbf{E}\rightarrow \mathbf{C}$ given by
	\begin{align*}
	\left(\bm{\mathcal N}\varphi\right)(x)=\mathrm{diag}\left({\mathcal N}_1[\varphi_1](x),\ldots,\; {\mathcal N}_N[\varphi_N](x)\right),
	\end{align*}
	and ${\mathcal N}_i[\varphi_i](x): = 
	\displaystyle\int_\Omega J_i(x-y)\varphi_i(y)dy, \;\text{for} \; \varphi\in \mathbf{E}, i=1,\ldots, N$. 

We can prove the result Theorem \ref{pro.3.6}

\begin{proof}[\bf \textit{Proof of Theorem \ref{pro.3.6}}]
	
	\noindent (1) 	Let ${\varphi}^p=\left(\varphi^p_1,\ldots, \varphi^p_N\right)$ be the corresponding eigenfunction, which is normalized $\left\|{\varphi}^p\right\|_{\mathbf E}=1$, associated to the eigenvalue $\lambda_p(\bm{\mathcal D})$. We deduce
	\begin{align*}
		\lambda_p(\bm{\mathcal D}) &=\dfrac{d_1}{2}
		\displaystyle\int_\Omega\displaystyle\int_\Omega J_1(x-y)\left(\varphi^p_1(x)-\varphi^p_1(y)\right)^2dxdy +\cdots+ \dfrac{d_N}{2}
		\displaystyle\int_\Omega\displaystyle\int_\Omega J_N(x-y)\left(\varphi^p_N(x)-\varphi^p_N(y)\right)^2dxdy\\
		&\quad +\displaystyle\int_\Omega\left[d_1\left(1 -k_1(x)\right)(\varphi^p_1)^2(x)+\cdots+d_N\left(1-k_N(x)\right)(\varphi^p_N)^2(x)\right]dx-\displaystyle\int_\Omega \varphi^p(x)^T A(x)\varphi^p(x) dx,
	\end{align*}
	where $k_i(x)=\displaystyle\int_\Omega J_i(x-y) dy$, $1\leq i\leq N$. Since $k_i(x)\leq 1$ for $x\in \Omega$ and $1\leq i\leq N$, it follows that
	$\lambda_p\left(\bm{\mathcal D}\right)$ is a monotone increasing function in $\bm{\mathcal D}$, which completes the proof \eqref{1}.

\noindent (2) First of all, in order to prove \eqref{2}, we consider the following operator
	$$
	\displaystyle L^0_\Omega[\psi]:=\int_\Omega J(x-y)\psi(y) dy-\psi(x), \psi\in \mathbf C(\Omega).
	$$
Then, by Theorem \ref{existence-eigen} and  \cite[Proposition $3.4$]{SX} the principal eigenvalue of $L^0_\Omega$, say $\lambda^0<0$  exists and is negative. Let $\psi^0\in \mathbf C(\overline{\Omega})\cap \mathbf E^{++}$ be an associated eigenfunction. 	
	
Now let us denote by $\displaystyle \lambda_{\bm{\mathcal D}}:=-d \lambda^0-N \sup_{x\in \overline{\Omega}}\left\{ \dfrac{\max_{1\leq j\leq N}\psi_j^0(x)}{\min_{1\leq j\leq N}\psi_j^0(x)}\max_{1\leq i,j\leq N}|a_{ij}(x)|\right\}$, where $d=\min\{d_1,\ldots, d_N\}$. Then, one sees that
$$
(\bm{\mathcal K} + \lambda_{\bm{\mathcal D}}\bm{\mathcal I})[\psi^0]= (\bm{\mathcal D} L^0_\Omega[\psi^0]+\bm{\mathcal A}+ \lambda_{\bm{\mathcal D}}\bm{\mathcal I})[\psi^0]  = \Big[\bm{\mathcal D} \lambda^0+\bm{\mathcal A} -d \lambda^0-N\dfrac{\max_{1\leq j\leq N}\psi_j(x)}{\min_{1\leq j\leq N}\psi_j(x)}\max_{x\in \overline{\Omega}}\|A(x)\|\Big][\psi^0]\leq 0,
$$	
where we use the following estimate
\begin{equation*}
\begin{split}
\left|\sum_{j=1}^N a_{ij}(x)\psi_j^0(x)\right|&\leq N \sup_{x\in \overline{\Omega}}\left\{ \max_{1\leq j\leq N}\psi_j^0(x)\max_{1\leq i,j\leq N}|a_{ij}(x)|\right\}\\
&\leq N \sup_{x\in \overline{\Omega}}\left\{ \dfrac{\max_{1\leq j\leq N}\psi_j^0(x)}{\min_{1\leq j\leq N}\psi_j^0(x)}\max_{1\leq i,j\leq N}|a_{ij}(x)|\right\}\psi_i^0(x)
\end{split}
\end{equation*}
for all $1\leq i\leq N$ and $x\in \overline{\Omega}$. Therefore, $(\lambda_{\bm{\mathcal D}}, \psi^0)$	 is a test pair for $\lambda_p(\bm{\mathcal D})$ and hence $\lambda_p(\bm{\mathcal D})\geq \lambda_{\bm{\mathcal D}}$. Letting $d\to +\infty$, we finally arrive at \eqref{2}. 
		
\noindent (3) From the properties of the kernel function $J_i,\ 1\leq i\leq N$ we obtain
	\begin{align*}
		\displaystyle \int_\Omega \displaystyle \int_\Omega J_i(x-y)\varphi_i(y)\varphi_i(x)dxdy\leq \displaystyle \int_\Omega \displaystyle \int_\Omega J_i(x-y)\left(\dfrac{\varphi_i^2(y)+\varphi_i^2(x)}{2}\right)dxdy\leq  \displaystyle \int_\Omega \varphi_i^2(x)dx, 1\leq i\leq N.
	\end{align*}
	For all ${\varphi}=
	\left(\varphi_1,\ldots, \varphi_n\right)\in \mathbf{E}$ and $\left\|{\varphi}\right\|_{\mathbf{E}}=1$, then 
	\begin{align*}
		\left\langle\bm{\mathcal K}{\varphi},{\varphi}\right\rangle=&d_1\displaystyle \int_\Omega \displaystyle \int_\Omega J_1(x-y)\varphi_1(x)\varphi_1(y)dxdy +\cdots+d_n\displaystyle \int_\Omega \displaystyle \int_\Omega J_n(x-y)\varphi_n(x)\varphi_n(y)dxdy\\
		&+\displaystyle \int_\Omega \varphi(x)^T(A(x)-D)\varphi(x)dx\\
		&\leq d_1\displaystyle \int_\Omega \varphi_1^2(x)dx+\cdots+d_n\displaystyle \int_\Omega \varphi_N^2(x)dx+\displaystyle \int_\Omega \varphi(x)^T(A(x)-D)\varphi(x)dx\\
		&=\displaystyle \int_\Omega \varphi(x)^T A(x)\varphi(x)dx\\
		&\leq\displaystyle \int_\Omega \bar\lambda(A(x))\|\varphi(x)\|^2dx\\
		&\leq  \sup_{x\in \Omega}\bar\lambda(A(x))\displaystyle \int_\Omega  \|\varphi(x)\|^2dx\\
		& =  \sup_{x\in \Omega}\bar\lambda(A(x)).
	\end{align*}
	Therefore, we conclude that
	\begin{align*}
		\lambda_p(\bm{\mathcal D}) = -\sup\limits_{\left\|{\varphi}\right\|_{\mathbf E} =1}\left\langle\bm{\mathcal K}{\varphi}, {\varphi}\right\rangle\geq - \sup_{x\in \Omega}\bar\lambda(A(x)).
	\end{align*}
	
	On the other hand, by Theorem \ref{existence-eigen}, one has
	\begin{align}\label{eq:3.22}
		\lambda_p(\bm{\mathcal D})\leq -\sup_{x\in \mathbb R^n}\bar\lambda(A(x)-D).
	\end{align}
	Hence, 	$\lambda_p(\bm{\mathcal D})\to  - \sup_{x\in \Omega}\bar\lambda(A(x))$ as $\bm{\mathcal D}\to 0$.

\end{proof}

\section{Asymptotic behaviour of the principal eigenvalue under scaling}\label{bcv-section-scal}
Let us recall that $\bm{\mathcal N}\colon \mathbf{E}\rightarrow \mathbf{C}$ given by
	\begin{align*}
	\left(\bm{\mathcal N}\varphi\right)(x)=\mathrm{diag}\left({\mathcal N}_1[\varphi_1](x),\ldots,\; {\mathcal N}_N[\varphi_N](x)\right),
	\end{align*}
	where ${\mathcal N}_i[\varphi_i](x): = 
	\displaystyle\int_\Omega J_i(x-y)\varphi_i(y)dy, \;\text{for}\; i=1,\ldots, N$. In what follows, $\bm{\mathcal N}_\delta$ denotes the $\bm{\mathcal N}$ associated to $J_{\sigma,i}$, $1\leq i\leq N$ and $\Omega_\delta$, where $J_{\sigma, i}(x-y):=\frac{1}{\sigma^N} J_i(\frac{x-y}{\sigma})$ and $\O_\sigma:=\frac{1}{\sigma}\O$.
	
	Now we start with the scaling invariance of $\bm{\mathcal N}+\bm{\mathcal A}$.
\subsection{Scaling invariance}

This invariance is a consequence of the following observation. By definition  of  $\lambda_p(\bm{\mathcal N}+ \bm{\mathcal A})$, we have for all $\lambda<\lambda_p(\bm{\mathcal N}+ \bm{\mathcal A})$, 
$$\bm{\mathcal N}[\varphi](x)+(A(x)+\lambda I)\varphi(x) \le 0 \quad \text{ in }\quad \O,$$
for some positive $\varphi \in \mathbf{E}(\Omega)$.
Let us denote by  $ X=\sigma x$, $\O_\sigma:=\frac{1}{\sigma}\O$, $\psi(X):=\varphi(\sigma X)$, $J_{\sigma, i}(x-y):=\frac{1}{\sigma^N} J_i(\frac{x-y}{\sigma})$, and $A_\sigma(x):=A\left(\frac{x}{\sigma}\right)$. Then,  by argument as in \cite[Subsection $4.1$]{BCV1}, $\psi \in \mathbf{E}(\Omega_\delta)$ satisfies
$$
\bm{\mathcal N}_{\sigma}[\psi](x)+(A_\sigma(x)+\lambda I)\psi(x)\le 0 \quad \text{in}\quad \O_\sigma.
 $$
Therefore, $\lambda \le \lambda_p(\bm{\mathcal N}_{\sigma}+A_\sigma)$ and as a consequence 
$$ \lambda_p(\bm{\mathcal N}+ \bm{\mathcal A})\le\lambda_p(\bm{\mathcal N}_{\sigma}+\bm{\mathcal A}_\sigma).$$
Interchanging the role of $\lambda_p(\bm{\mathcal N}+ \bm{\mathcal A})$ and $\lambda_p(\bm{\mathcal N}_{\sigma}+A_\sigma)$ in the above argument yields
$$ \lambda_p(\bm{\mathcal N}+ \bm{\mathcal A})\geq \lambda_p(\bm{\mathcal N}_{\sigma}+\bm{\mathcal A}_\sigma).$$
Hence, we get
$$
\lambda_p(\bm{\mathcal N}+ \bm{\mathcal A})=\lambda_p(\bm{\mathcal N}_{\sigma}+\bm{\mathcal A}_\sigma).
$$

Now let us recall that
\begin{equation} \label{eq7-5-1}
	\begin{split}
	&\displaystyle\lambda_v(\bm{\mathcal N}+ \bm{\mathcal A})=\inf_{\varphi\in \mathbf E(\Omega),\varphi\not \equiv 0} -\dfrac{<\bm{\mathcal N}+ \bm{\mathcal A}\varphi,\varphi>}{\| \varphi \|^2_{\mathbf E(\Omega)}   }\\
	&=\displaystyle \inf_{\varphi\in \mathbf E(\Omega),\varphi\not \equiv 0}
	\dfrac{\dfrac{1}{2} \sum\limits_{i=1}^N\int\limits_{\Omega}\int\limits_{\Omega}J_i(x-y)[\varphi_i(x)-\varphi_i(y)]^2 dxdy-\int\limits_{\Omega}\varphi(x)^T A(x) \varphi(x) -\sum\limits_{i=1}^N \int_{\Omega} \int\limits_{\Omega}J_i(x-y)  \varphi_i^2(x) dx dy }{\| \varphi \|^2_{\mathbf E (\Omega)}   },\\
&\lambda_p(\bm{\mathcal N}+ \bm{\mathcal A})=\sup\{\lambda \in \mathbb R\colon \exists \varphi \in \mathbf E (\overline{\Omega}), \varphi>0, \bm{\mathcal N}[\varphi]+A(x)\varphi(x)+\lambda \varphi\leq 0 \text{ in }\Omega\};\\
&\lambda_p'(\bm{\mathcal N}+ \bm{\mathcal A})=\inf\{\lambda \in \mathbb R\colon \exists \varphi \in \mathbf E(\Omega)\cap \mathbf E^\infty(\Omega), 0\not \equiv\varphi\geq 0, \bm{\mathcal N}[\varphi]+A(x)\varphi(x)+\lambda \varphi\geq 0\text{ in }\Omega\}.
%&\lambda_p''(\bm{\mathcal N}+ \bm{\mathcal A})=\inf\{\lambda \in \mathbb R\colon \exists \varphi \in \mathbf  C_c(\Omega), 0\not \equiv\varphi\geq 0, \bm{\mathcal N}[\varphi]+A(x)\varphi(x)+\lambda \varphi\geq 0\text{ in }\Omega\}.
\end{split}
\end{equation}
First of all, by Theorem \ref{existence-eigen} and Proposition \ref{proposition_3.4}, we obtain the following lemma.
\begin{lemma} \label{estimate-eigenvalue}We have
	\begin{align}\label{eq7-5-2}
		-\sup_{\Omega}\bar \lambda(A(x))-\max_{1\leq i\leq N} \sup_{\Omega} \int_\Omega J_i(x-y) dy\leq \lambda_v(\bm{\mathcal N}+ \bm{\mathcal A})<  -\sup_{\Omega}\bar \lambda(A(x))
	\end{align}
\end{lemma}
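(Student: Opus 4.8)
The plan is to prove the two inequalities of \eqref{eq7-5-2} separately. The right-hand strict inequality is, up to rewriting, exactly the estimate \eqref{eq3.3} already obtained inside the proof of Theorem~\ref{existence-eigen}, so the only genuine computation is the (elementary) left-hand bound, which I would get by a direct estimate of the quadratic form in the variational characterization \eqref{eq7-5-1} of $\lambda_v(\bm{\mathcal N}+\bm{\mathcal A})$.

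\textbf{Lower bound.} Fix $\varphi=(\varphi_1,\dots,\varphi_N)\in\mathbf E$ with $\varphi\not\equiv0$ and use that, since $\langle\bm{\mathcal N}\varphi,\varphi\rangle=\sum_{i=1}^N\int_\Omega\int_\Omega J_i(x-y)\varphi_i(x)\varphi_i(y)\,dx\,dy$,
\[
-\langle(\bm{\mathcal N}+\bm{\mathcal A})\varphi,\varphi\rangle=-\sum_{i=1}^N\int_\Omega\int_\Omega J_i(x-y)\varphi_i(x)\varphi_i(y)\,dx\,dy-\int_\Omega\varphi(x)^TA(x)\varphi(x)\,dx .
\]
For the nonlocal term I would apply $|\varphi_i(x)\varphi_i(y)|\le\tfrac12(\varphi_i(x)^2+\varphi_i(y)^2)$ together with the symmetry of $J_i$ to obtain
\[
\Bigl|\sum_{i=1}^N\int_\Omega\int_\Omega J_i(x-y)\varphi_i(x)\varphi_i(y)\,dx\,dy\Bigr|\le\sum_{i=1}^N\int_\Omega\Bigl(\int_\Omega J_i(x-y)\,dy\Bigr)\varphi_i(x)^2\,dx\le\Bigl(\max_{1\le i\le N}\sup_\Omega\int_\Omega J_i(x-y)\,dy\Bigr)\|\varphi\|_{\mathbf E}^2 ,
\]
while the symmetry of $A(x)$ gives the pointwise bound $\varphi(x)^TA(x)\varphi(x)\le\bar\lambda(A(x))\|\varphi(x)\|^2\le\bigl(\sup_\Omega\bar\lambda(A)\bigr)\|\varphi(x)\|^2$. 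Adding these and dividing by $\|\varphi\|_{\mathbf E}^2$ yields
\[
-\frac{\langle(\bm{\mathcal N}+\bm{\mathcal A})\varphi,\varphi\rangle}{\|\varphi\|_{\mathbf E}^2}\ge-\sup_\Omega\bar\lambda(A)-\max_{1\le i\le N}\sup_\Omega\int_\Omega J_i(x-y)\,dy ,
\]
and passing to the infimum over $\varphi$ gives the left-hand inequality of \eqref{eq7-5-2}.

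\textbf{Strict upper bound.} By Theorem~\ref{existence-eigen} one has $\lambda_v(\bm{\mathcal N}+\bm{\mathcal A})=-\sup_{\|\varphi\|_{\mathbf E}=1}\langle(\bm{\mathcal N}+\bm{\mathcal A})\varphi,\varphi\rangle=\lambda_-$, and inequality \eqref{eq3.3} in its proof asserts precisely $\lambda_-<-\max_{\overline\Omega}\bar\lambda(A(x))$; since $\bar\lambda(A(\cdot))$ is continuous on the compact set $\overline\Omega$ one has $\max_{\overline\Omega}\bar\lambda(A)=\sup_\Omega\bar\lambda(A)$, which gives the right-hand inequality of \eqref{eq7-5-2}. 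Alternatively, one can reprove this strictness directly here by plugging the test function $v=g\,e$ with $g(x)=\bigl(\gamma+\lambda_1(x_0)-\lambda_1(x)\bigr)^{-1}$ into the Rayleigh quotient and invoking Hypothesis $(\mathbf P2)$ exactly as in the proof of Theorem~\ref{existence-eigen}. The only nontrivial ingredient is this strict inequality, which has already been established; the lower bound is a routine quadratic-form estimate, so I do not anticipate any real obstacle, beyond being careful with the identification $\langle\bm{\mathcal N}\varphi,\varphi\rangle=\sum_{i=1}^N\iint_\Omega J_i(x-y)\varphi_i(x)\varphi_i(y)\,dx\,dy$ that makes the AM--GM step applicable.
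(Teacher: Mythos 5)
Your proposal is correct and follows essentially the same route as the paper: the strict upper bound is obtained by citing the inequality \eqref{eq3.3} established in the proof of Theorem~\ref{existence-eigen}, and the lower bound is the same routine quadratic-form estimate (the paper phrases it via the decomposition in \eqref{eq7-5-1}, dropping the nonnegative term $\tfrac12\sum_i\iint J_i(x-y)[\varphi_i(x)-\varphi_i(y)]^2$, which is equivalent to your AM--GM step). No gaps.
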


\begin{proof}
	Following the proof of Theorem \ref{existence-eigen}, we obtain the right-hand side equality in (\ref{eq7-5-2})
	$$
	\lambda_v(\bm{\mathcal N}+ \bm{\mathcal A})<  -\sup_{\Omega}\bar \lambda(A(x)).
	$$
	On the other hand, a computation shows that
	$$
	\dfrac{\int_{\Omega}\varphi(x)^T A(x) \varphi(x) dx}{\|\varphi \|^2_{E}}\leq  \sup_{\Omega} \bar\lambda(A(x))
	$$	
	and
	$$	
	\left|\sum_{i=1}^N\int_{\Omega} \int_{\Omega}J_i(x-y) \varphi_i^2(x) dx dy  \right|\leq \max_{1\leq i\leq N} \sup_{x\in \Omega} \int_{\Omega}  J_i(x-y)dy \int_{\Omega}\sum_{i=1}^N\varphi_i^2(x) dx dy\leq  \max_{1\leq i\leq N} \sup_{x\in \Omega} \int_{\Omega} J_i(x-y)dy\ \|\varphi \|^2_{\mathbf E}.
	$$
	Therefore, by (\ref{eq7-5-1}) we conclude that 
	$$
	-\sup_{\Omega}\bar \lambda(A(x))-\max_{1\leq i\leq N} \sup_{\Omega} \int_\Omega J_i(x-y) dy\leq \lambda_v(\bm{\mathcal N}+ \bm{\mathcal A}),
	$$	
	which completes the proof.
\end{proof}

Furthermore, by Lemma \ref{lem3.5-5}, one has the following lemma.
\begin{lemma}\label{equality-eigenvalue} One has
\begin{align}\label{eq7-5-3}
	\lambda_p'(\bm{\mathcal N}+ \bm{\mathcal A})=\lambda_p(\bm{\mathcal N}+ \bm{\mathcal A})=\lambda_v(\bm{\mathcal N}+ \bm{\mathcal A}).
\end{align}
\end{lemma}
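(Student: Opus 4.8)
The plan is to deduce \eqref{eq7-5-3} immediately from Lemma \ref{lem3.5-5}. First I would observe that the operator $\bm{\mathcal N}+\bm{\mathcal A}$ is exactly the operator $\bm{\mathcal K}$ of Theorem \ref{existence-eigen}, and that in the present section $A(x)$ is a continuous symmetric matrix-valued function on $\overline{\Omega}$ with $a_{ij}(x)=a_{ji}(x)>0$ for $i\ne j$, while $(\mathbf J)$ and $(\mathbf P)$ are standing hypotheses; these are precisely the assumptions under which Theorem \ref{existence-eigen}, Proposition \ref{proposition_3.4} and hence Lemma \ref{lem3.5-5} apply. Applying Lemma \ref{lem3.5-5} to $\bm{\mathcal K}=\bm{\mathcal N}+\bm{\mathcal A}$ gives the chain
\[
\lambda_p'(\bm{\mathcal N}+\bm{\mathcal A})=\lambda_p(\bm{\mathcal N}+\bm{\mathcal A})=-\sup_{\|\varphi\|_{\mathbf E}=1}\langle(\bm{\mathcal N}+\bm{\mathcal A})\varphi,\varphi\rangle=\lambda_v(\bm{\mathcal N}+\bm{\mathcal A})=\lambda_1(\bm{\mathcal N}+\bm{\mathcal A}),
\]
and dropping the last equality is exactly \eqref{eq7-5-3}.

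The only point that requires a line of verification is that the three functionals recalled in \eqref{eq7-5-1} agree with the general definitions in \eqref{eq7-8-1} when $\bm{\mathcal K}=\bm{\mathcal N}+\bm{\mathcal A}$. For $\lambda_v$ this is a one-line expansion: writing out $\langle(\bm{\mathcal N}+\bm{\mathcal A})\varphi,\varphi\rangle$ and symmetrizing the double integrals produces precisely the Rayleigh quotient displayed in \eqref{eq7-5-1}, with the infimum taken over the same class $\varphi\in\mathbf E(\Omega)$, $\varphi\not\equiv0$. For $\lambda_p$ and $\lambda_p'$ one merely rewrites the defining inequality $\bm{\mathcal K}[\varphi]+\lambda\varphi\le0$ (resp.\ $\ge0$) as $\bm{\mathcal N}[\varphi]+A(x)\varphi+\lambda\varphi\le0$ (resp.\ $\ge0$); the admissible test classes (positive continuous $\varphi$ for $\lambda_p$, nonnegative $\varphi\in\mathbf C(\overline{\Omega})\cap\mathbf E^\infty(\Omega)$ for $\lambda_p'$) are untouched. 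So no fresh computation is needed.

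For the reader's convenience I would also recall in one sentence how Lemma \ref{lem3.5-5} itself is obtained: the inequality $\lambda_p'\le\lambda_p$ and the equalities $\lambda_p=\lambda_v=\lambda_1$ follow from Theorem \ref{existence-eigen} together with Proposition \ref{proposition_3.4}(a)--(b), while the reverse bound $\lambda_v\le\lambda_p'$ is proved by fixing any $\lambda>\lambda_p'$, choosing an admissible $\psi\ge0$, $\psi\in\mathbf C(\overline{\Omega})\cap\mathbf E^\infty(\Omega)$ with $\bm{\mathcal K}[\psi]+\lambda\psi\ge0$, noting that $\psi\in\mathbf E(\Omega)$ because $\Omega$ is bounded, pairing the inequality with $\psi$ in $\mathbf E(\Omega)$ to get $\langle\bm{\mathcal K}\psi,\psi\rangle/\|\psi\|_{\mathbf E}^2\ge-\lambda$, hence $\lambda_v\le\lambda$, and finally letting $\lambda\downarrow\lambda_p'$.

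In short, there is essentially no obstacle here: the whole substance sits in Theorem \ref{existence-eigen} and Lemma \ref{lem3.5-5}, so the only "hard part" is the clerical matter of checking that their hypotheses are available in the scaling setting and that the two sets of definitions \eqref{eq7-8-1} and \eqref{eq7-5-1} describe the same objects — both of which are routine.
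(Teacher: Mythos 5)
Your proposal is correct and follows exactly the paper's route: the paper derives Lemma \ref{equality-eigenvalue} as an immediate consequence of Lemma \ref{lem3.5-5} applied to $\bm{\mathcal K}=\bm{\mathcal N}+\bm{\mathcal A}$, with the same identification of the definitions in \eqref{eq7-5-1} with those in \eqref{eq7-8-1}. Your additional recap of how Lemma \ref{lem3.5-5} is proved (in particular the pairing argument giving $\lambda_v\le\lambda_p'$) matches the paper's argument as well.
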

\subsection{Asymptotic behaviors of $\lambda_p\left({\mathcal K}_{\sigma, m,\Omega}+{\mathcal A}\right)$ }

Let us  focus on the behaviour of the principal eigenvalue of  the spectral problem
$$
\bm{\mathcal K}_{\sigma, m,\Omega}[\varphi]+(\bm{\mathcal A}+\lambda \bm{\mathcal I})\varphi=0 \quad \text{ in }\quad \O,
$$
where $\bm{\mathcal K}_{\sigma, m,\Omega}[\varphi]=({\mathcal K}_{\sigma, m,\Omega}^1[\varphi_1],\ldots, {\mathcal K}_{\sigma, m,\Omega}^N[\varphi_N])$ with
$$
{\mathcal K}_{\sigma, m,\Omega}^i[\varphi_i]:=\frac{1}{\sigma^m} \left(\int_{\Omega}J_{\sigma,i}(x-y)\varphi_i(y)\,dy -\varphi_i(x)\right),\; i=1,\ldots, N.
$$ 
where 
$J_{\sigma,i}(z):=\frac{1}{\sigma^N}J_i\left(\frac{z}{\sigma}\right)$.  

\begin{lemma} If $\Omega_1\subset \Omega_2$, then $\lambda_p(\bm{\mathcal K}_{\sigma, m,\Omega_1})\geq \lambda_p(\bm{\mathcal K}_{\sigma, m,\Omega_2})$. In addition, one has
$$
\left|\lambda_p(\bm{\mathcal K}_{\sigma, m,\Omega_1})- \lambda_p(\bm{\mathcal K}_{\sigma, m,\Omega_2})\right|\leq C_0 |\Omega_2\setminus \Omega_1|,
$$
where $|\Omega|$ denotes the Lebesgue measure of $\Omega$ and $C_0$ is a positive constant depending on $J, \sigma, m$, and $\Omega_2$ .
\end{lemma}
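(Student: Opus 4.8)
The plan is to establish the two assertions by different routes: the monotonicity from the supersolution characterization of $\lambda_p$, and the Lipschitz-type bound from the variational one, both at our disposal through Lemmas~\ref{lem3.5-5} and \ref{equality-eigenvalue}. A preliminary remark is in order. Since
$\bm{\mathcal K}_{\sigma,m,\Omega}+\bm{\mathcal A}=\tfrac{1}{\sigma^m}\bm{\mathcal N}_\sigma-\tfrac{1}{\sigma^m}\bm{\mathcal I}+\bm{\mathcal A}$,
with $\bm{\mathcal N}_\sigma$ the nonlocal operator carrying the kernels $J_{\sigma,i}$ (continuous and strictly positive at the origin), this is again an operator of the form ``compact nonlocal part plus symmetric zero‑order matrix with positive off‑diagonal entries'', so Theorem~\ref{existence-eigen} and Lemmas~\ref{estimate-eigenvalue}--\ref{equality-eigenvalue} apply to it; in particular its principal eigenfunction is continuous and strictly positive on $\overline{\Omega}$, and $-\sup_{\overline{\Omega}}\bar\lambda(A)\le\lambda_p(\bm{\mathcal K}_{\sigma,m,\Omega})<\tfrac{1}{\sigma^m}-\sup_{\overline{\Omega}}\bar\lambda(A)$ (the lower bound because $\langle(\bm{\mathcal K}_{\sigma,m,\Omega}+\bm{\mathcal A})\varphi,\varphi\rangle\le\int_\Omega\varphi^T A\varphi\le\sup_{\overline{\Omega}}\bar\lambda(A)\|\varphi\|_{\mathbf{E}}^2$, the upper bound from the scaled Theorem~\ref{existence-eigen}). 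Here $\lambda_p(\bm{\mathcal K}_{\sigma,m,\Omega})$ denotes the principal eigenvalue of the spectral problem $\bm{\mathcal K}_{\sigma,m,\Omega}[\varphi]+(\bm{\mathcal A}+\lambda\bm{\mathcal I})\varphi=0$.

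For the monotonicity I would fix $\lambda<\lambda_p(\bm{\mathcal K}_{\sigma,m,\Omega_2})$ and choose $\varphi\in\mathbf{C}(\overline{\Omega_2})$, $\varphi>0$, with $\bm{\mathcal K}_{\sigma,m,\Omega_2}[\varphi]+(\bm{\mathcal A}+\lambda\bm{\mathcal I})\varphi\le 0$ in $\Omega_2$. Because $J_{\sigma,i}\ge0$, $\varphi_i>0$ and $\Omega_1\subseteq\Omega_2$, one has $\int_{\Omega_1}J_{\sigma,i}(x-y)\varphi_i(y)\,dy\le\int_{\Omega_2}J_{\sigma,i}(x-y)\varphi_i(y)\,dy$ for every $x\in\Omega_1$, hence $\bm{\mathcal K}_{\sigma,m,\Omega_1}[\varphi](x)\le\bm{\mathcal K}_{\sigma,m,\Omega_2}[\varphi](x)$ on $\Omega_1$; thus $\varphi|_{\overline{\Omega_1}}$ is a positive supersolution for the $\Omega_1$-problem, so $\lambda\le\lambda_p(\bm{\mathcal K}_{\sigma,m,\Omega_1})$, and letting $\lambda\uparrow\lambda_p(\bm{\mathcal K}_{\sigma,m,\Omega_2})$ gives $\lambda_p(\bm{\mathcal K}_{\sigma,m,\Omega_1})\ge\lambda_p(\bm{\mathcal K}_{\sigma,m,\Omega_2})$.

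For the quantitative bound, by the monotonicity just proved it suffices to bound $\lambda_p(\bm{\mathcal K}_{\sigma,m,\Omega_1})-\lambda_p(\bm{\mathcal K}_{\sigma,m,\Omega_2})$ from above. I would take $\phi$, the principal eigenfunction on the \emph{fixed} domain $\Omega_2$, normalized by $\|\phi\|_{\mathbf{E}(\Omega_2)}=1$; it is continuous on $\overline{\Omega_2}$, so $C_1:=\|\phi\|_{\mathbf{E}^\infty(\Omega_2)}<\infty$ depends only on $\sigma,m,J,A,\Omega_2$. Using $\phi|_{\Omega_1}$ as a test function in $-\lambda_p(\bm{\mathcal K}_{\sigma,m,\Omega_1})=\sup_{\|\psi\|_{\mathbf{E}(\Omega_1)}=1}\langle(\bm{\mathcal K}_{\sigma,m,\Omega_1}+\bm{\mathcal A})\psi,\psi\rangle$ and writing $\delta:=|\Omega_2\setminus\Omega_1|$, one checks $\big|\,\|\phi|_{\Omega_1}\|_{\mathbf{E}(\Omega_1)}^2-1\,\big|\le NC_1^2\delta$, while $\langle(\bm{\mathcal K}_{\sigma,m,\Omega_2}+\bm{\mathcal A})\phi,\phi\rangle_{\mathbf{E}(\Omega_2)}=-\lambda_p(\bm{\mathcal K}_{\sigma,m,\Omega_2})$ differs from $\langle(\bm{\mathcal K}_{\sigma,m,\Omega_1}+\bm{\mathcal A})\phi|_{\Omega_1},\phi|_{\Omega_1}\rangle_{\mathbf{E}(\Omega_1)}$ by some $R$ with $|R|\le C_2\delta$: the double-integral pieces differ only over $(\Omega_2\times\Omega_2)\setminus(\Omega_1\times\Omega_1)$, of measure $\le 2|\Omega_2|\delta$, where the integrand is $\le\tfrac{1}{\sigma^m}\|J_{\sigma,i}\|_\infty C_1^2$, and the diagonal and zero-order pieces are integrals over $\Omega_2\setminus\Omega_1$, bounded by $(\tfrac{1}{\sigma^m}+\sup_{\overline{\Omega_2}}\|A\|)NC_1^2\delta$. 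This yields $-\lambda_p(\bm{\mathcal K}_{\sigma,m,\Omega_1})\ge\big(-\lambda_p(\bm{\mathcal K}_{\sigma,m,\Omega_2})-R\big)/\|\phi|_{\Omega_1}\|_{\mathbf{E}(\Omega_1)}^2$; for $\delta\le(2NC_1^2)^{-1}$ (so the denominator is $\ge\tfrac12$) elementary manipulations, using $1/a\le1+2(1-a)$ and the a priori bound on $|\lambda_p(\bm{\mathcal K}_{\sigma,m,\Omega_2})|$ from the interval above, give $\lambda_p(\bm{\mathcal K}_{\sigma,m,\Omega_1})-\lambda_p(\bm{\mathcal K}_{\sigma,m,\Omega_2})\le C_0\delta$ with $C_0=C_0(J,\sigma,m,\Omega_2)$; for $\delta>(2NC_1^2)^{-1}$ the same holds trivially after enlarging $C_0$, since both eigenvalues lie in the fixed interval $[-\sup_{\overline{\Omega_2}}\bar\lambda(A),\,\tfrac{1}{\sigma^m}-\inf_{\overline{\Omega_2}}\bar\lambda(A))$. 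Combined with the monotonicity, $0\le\lambda_p(\bm{\mathcal K}_{\sigma,m,\Omega_1})-\lambda_p(\bm{\mathcal K}_{\sigma,m,\Omega_2})\le C_0|\Omega_2\setminus\Omega_1|$, which is the claim.

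I expect the quantitative estimate to be the main obstacle. Trying the reverse supersolution comparison fails because the $\Omega_1$-eigenfunction admits no lower bound uniform in $\Omega_1$ (on subdomains much larger than the scale $\sigma$ it can be arbitrarily small near points far from its bulk); the remedy is precisely to work with the eigenfunction on the fixed domain $\Omega_2$ — a genuine bounded continuous function — and to track carefully how the bilinear form $\langle(\bm{\mathcal K}_{\sigma,m,\Omega}+\bm{\mathcal A})\psi,\psi\rangle$ and the $L^2$-normalization degrade under restriction, the remainder being routine bookkeeping together with the case split on $|\Omega_2\setminus\Omega_1|$.
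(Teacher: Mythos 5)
Your proof is correct, and the first half (restricting a supersolution from $\Omega_2$ to $\Omega_1$) is exactly the paper's argument. For the quantitative bound, however, you take a genuinely different route. The paper also works with the principal eigenfunction $\psi$ of the \emph{fixed} domain $\Omega_2$, but instead of inserting its restriction into the Rayleigh quotient it makes a pointwise computation: on $\overline{\Omega}_1$ one has exactly $\bm{\mathcal K}_{\sigma,m,\Omega_1}[\psi]_i+\lambda_p(\bm{\mathcal K}_{\sigma,m,\Omega_2})\psi_i=-\sigma^{-m}\int_{\Omega_2\setminus\Omega_1}J_{\sigma,i}(\cdot-y)\psi_i(y)\,dy\geq-\sigma^{-m}\|J_{\sigma,i}\|_\infty\,|\Omega_2\setminus\Omega_1|$, and the constant error is absorbed into a multiple of $\psi_i$ using $\min_{\overline{\Omega}_1}\psi_i\geq\min_{\overline{\Omega}_2}\psi_i>0$; this exhibits $\psi|_{\Omega_1}$ as a subsolution with shift $C_0|\Omega_2\setminus\Omega_1|$, $C_0=\sigma^{-m}\max_i\|J_{\sigma,i}\|_\infty/\min_{\overline{\Omega}_1}\psi_i$, and the bound follows from the $\lambda_p'$ characterization in two lines, with an explicit constant and no case split. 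Your variational version buys nothing extra here but costs the bookkeeping of the $L^2$ renormalization, the remainder $R$, and the dichotomy on $|\Omega_2\setminus\Omega_1|$; on the other hand it only uses $\|\psi\|_\infty<\infty$ rather than a positive lower bound on $\psi$. Note also that your closing objection to the ``reverse supersolution comparison'' does not apply to the paper's argument: the paper never uses the $\Omega_1$-eigenfunction, and the lower bound it needs, $\min_{\overline{\Omega}_1}\psi\geq\min_{\overline{\Omega}_2}\psi>0$, is uniform in $\Omega_1$ precisely because $\psi$ lives on the fixed $\overline{\Omega}_2$. Both arguments share the same implicit reliance on the eigentheory (existence and the identities $\lambda_p=\lambda_p'=\lambda_v$) being available on the subdomain $\Omega_1$, so you are not worse off than the paper on that score.
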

\begin{proof} Let $(\lambda, \psi)$ be a test pair for $\lambda_p(\bm{\mathcal K}_{\sigma, m,\Omega_2})$, i.e., $(\lambda, \psi)\in \mathbb R\times \mathbf E^{++}(\Omega_2)$ satisfies 
$$
\left(\bm{\mathcal K}_{\sigma, m,\Omega_2} +\lambda\bm{\mathcal I}\right)[\psi]\leq 0\text{ on } \overline{\Omega}_2.
$$
Now let us define $\tilde \psi(x)=\psi(x)$ for all $x\in \overline{\Omega}_1$. Then, one has $\tilde\psi\in \mathbf E^{++}(\Omega_1)$. Moreover, for any $x\in \overline{\Omega}_1$ and $1\leq i\leq N$ we have
\begin{equation*}
\begin{split}
\left(\bm{\mathcal K}_{\sigma, m,\Omega_1} +\lambda\bm{\mathcal I}\right)[\tilde \psi]_i(x)&\leq \frac{1}{\sigma^m} \left(\int_{\Omega_1}J_{\sigma,i}(x-y)\tilde\psi_{i}(y)\,dy -\psi_{1i}(x)\right)+ \lambda \tilde\psi_{i}(x) \\
&\leq \frac{1}{\sigma^m} \left(\int_{\Omega_2}J_{\sigma,i}(x-y)\psi_{i}(y)\,dy -\psi_{i}(x)\right)+ \lambda \psi_{i}(x)  \\
&\leq \left(\bm{\mathcal K}_{\sigma, m,\Omega_2} +\lambda\bm{\mathcal I}\right)[\psi]_i(x)\leq 0.
\end{split}
\end{equation*}

Therefore, it follows that $(\lambda, \tilde\psi)$ is a test pair for $\lambda_p(\bm{\mathcal K}_{\sigma, m,\Omega_1})$. Thus $\lambda\leq \lambda_p(\bm{\mathcal K}_{\sigma, m,\Omega_1})$ and hence taking the supremum over all such $\lambda$, we obtain that
\begin{equation}\label{eq-18-1}
\lambda_p(\bm{\mathcal K}_{\sigma, m,\Omega_2})\leq \lambda_p(\bm{\mathcal K}_{\sigma, m,\Omega_1}).
\end{equation}

Now we are going to prove the other statement. Indeed, let $(\lambda_p(\bm{\mathcal K}_{\sigma, m,\Omega_2}), \psi)$ be the eigen-pair of $\bm{\mathcal K}_{\sigma, m,\Omega_2}$ with the normalization $\sup\limits_{\overline{\Omega}_2}\psi=1$. Then, a computation shows that
\begin{equation*}
\begin{split}
\bm{\mathcal K}_{\sigma, m,\Omega_1}[\psi]_i+\lambda_p(\bm{\mathcal K}_{\sigma, m,\Omega_2})[\psi]_i&= -\frac{1}{\sigma^m}\int\limits_{\Omega_2\setminus \Omega_1} J_{\sigma,i}(\cdot -y)\psi_i(y) dy\\
&\geq -\frac{\|J_{\sigma,i}\|_\infty}{\sigma^m} |\Omega_2\setminus\Omega_1|\\
&\geq -\frac{\|J_{\sigma,i}\|_\infty}{\sigma^m\min\limits_{\overline{\Omega}_1}\psi_i} |\Omega_2\setminus\Omega_1|\psi_i \;\text{ on }\; \overline{\Omega}_1
\end{split}
\end{equation*}
for $1\leq i\leq N$. This implies that
$$
\bm{\mathcal K}_{\sigma, m,\Omega_1}[\psi]+\left[\lambda_p(\bm{\mathcal K}_{\sigma, m,\Omega_2})+C_0|\Omega_2\setminus \Omega_1|\right][\psi]\geq 0 \;\text{ on }\; \overline{\Omega}_1,
$$
where $C_0=\dfrac{1}{\sigma^m}\max\limits_{1\leq i\leq N}\dfrac{\|J_{\sigma,i}\|_\infty}{\min\limits_{\overline{\Omega}_1}\psi_i }$, and hence
$$
\lambda_p(\bm{\mathcal K}_{\sigma, m,\Omega_1})\leq \lambda_p(\bm{\mathcal K}_{\sigma, m,\Omega_2})+C_0|\Omega_2\setminus \Omega_1|.
$$
Therefore, this together with (\ref{eq-18-1}) implies our result.
\end{proof}

Assuming that $0\le m< 2$, we obtain here the limits of $\lambda_p(\bm{\mathcal K}_{\sigma,m}+ \bm{\mathcal A})$ when $\sigma \to 0$ and $\sigma\to \infty$. 
But before going to the study of these limits, we recall a known inequality (cf. \cite[Lemma $4.1$]{BCV1}).
\begin{lemma}[see Lemma $4.1$ in \cite{BCV1}]\label{bcv-lem-I-le-J}
	Let $J\in C(\mathbb R^n )$, $J\geq 0$, $J$ symmetric with unit mass, such that $|z|^2J(z) \in (L^1(\mathbb R^n ))^N$ . Then for all  $\varphi \in H_0^1(\Omega)$ we have
	$$-\int_{\Omega}\left(\int_{\Omega}J(x-y)\varphi(y)\,dy-\varphi(x)\right)\varphi(x)\,dx\le \frac{1}{2}\int_{\mathbb R^n }J(z)|z|^2\,dz \nlp{\nabla \varphi}{2}{\Omega}^2.$$ 
\end{lemma}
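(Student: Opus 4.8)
The plan is to reduce the bilinear form on the left‑hand side to a Gagliardo‑type seminorm by exploiting the symmetry and unit mass of $J$, and then to control that seminorm slice‑by‑slice by the $H^1$‑seminorm via a first‑order finite‑difference estimate. First I would extend $\varphi\in H_0^1(\Omega)$ by zero to $\tilde\varphi\in H^1(\mathbb R^n)$, which satisfies $\|\nabla\tilde\varphi\|_{L^2(\mathbb R^n)}=\nlp{\nabla\varphi}{2}{\Omega}$. Since $\varphi$ is supported in $\Omega$ and $\int_{\mathbb R^n}J=1$, both terms on the left can be written as double integrals over $\mathbb R^n\times\mathbb R^n$:
$$\int_\Omega\Big(\int_\Omega J(x-y)\varphi(y)\,dy\Big)\varphi(x)\,dx=\int_{\mathbb R^n}\int_{\mathbb R^n}J(x-y)\tilde\varphi(x)\tilde\varphi(y)\,dx\,dy,$$
$$\int_\Omega\varphi(x)^2\,dx=\int_{\mathbb R^n}\int_{\mathbb R^n}J(x-y)\tilde\varphi(x)^2\,dx\,dy.$$
Subtracting, symmetrizing in $(x,y)$ (legitimate because $J$ is even), and substituting $z=x-y$ gives the identity
$$-\int_{\Omega}\Big(\int_{\Omega}J(x-y)\varphi(y)\,dy-\varphi(x)\Big)\varphi(x)\,dx=\frac12\int_{\mathbb R^n}J(z)\Big(\int_{\mathbb R^n}|\tilde\varphi(x)-\tilde\varphi(x-z)|^2\,dx\Big)\,dz.$$

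Next I would estimate the inner integral. For $\varphi\in C_c^\infty(\Omega)$ one has $\tilde\varphi(x)-\tilde\varphi(x-z)=\int_0^1 z\cdot\nabla\tilde\varphi(x-z+tz)\,dt$, so Cauchy–Schwarz in $t$ yields $|\tilde\varphi(x)-\tilde\varphi(x-z)|^2\le|z|^2\int_0^1|\nabla\tilde\varphi(x-z+tz)|^2\,dt$; integrating in $x$ and using translation invariance of Lebesgue measure gives $\int_{\mathbb R^n}|\tilde\varphi(x)-\tilde\varphi(x-z)|^2\,dx\le|z|^2\|\nabla\tilde\varphi\|_{L^2(\mathbb R^n)}^2$. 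This extends to every $\varphi\in H_0^1(\Omega)$ by density of $C_c^\infty(\Omega)$. Substituting into the identity above, and using the hypothesis $|z|^2J(z)\in L^1(\mathbb R^n)$ to justify Fubini and the finiteness of the right‑hand side, we obtain
$$-\int_{\Omega}\Big(\int_{\Omega}J(x-y)\varphi(y)\,dy-\varphi(x)\Big)\varphi(x)\,dx\le\frac12\Big(\int_{\mathbb R^n}J(z)|z|^2\,dz\Big)\nlp{\nabla\varphi}{2}{\Omega}^2,$$
which is exactly the asserted inequality.

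The only delicate point — and thus what I regard as the main obstacle — is the rewriting step: one must verify that the zero‑extension of an $H_0^1(\Omega)$ function genuinely lies in $H^1(\mathbb R^n)$ with unchanged gradient norm, and that the support condition allows integration over $\Omega$ to be replaced by integration over $\mathbb R^n$, so that $\int_{\mathbb R^n}J=1$ can be used to homogenize the $-\varphi^2$ term; after that the symmetrization and the finite‑difference bound are routine. Alternatively, one can sidestep the real‑variable computation by Plancherel: $-\int_{\mathbb R^n}(J\star\tilde\varphi-\tilde\varphi)\tilde\varphi=\int_{\mathbb R^n}(1-\widehat J(\xi))|\widehat{\tilde\varphi}(\xi)|^2\,d\xi$, and $1-\widehat J(\xi)=\int_{\mathbb R^n}J(z)\big(1-\cos(z\cdot\xi)\big)\,dz\le\frac12|\xi|^2\int_{\mathbb R^n}J(z)|z|^2\,dz$, which yields the bound immediately; I would keep the real‑variable argument as the primary one since it matches the style of the cited reference \cite{BCV1}.
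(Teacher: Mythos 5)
Your proof is correct. The paper itself gives no argument for this lemma --- it is simply recalled from \cite{BCV1} (Lemma 4.1 there) --- and your symmetrization of the bilinear form into the Gagliardo-type quantity $\tfrac12\int J(z)\|\tilde\varphi(\cdot)-\tilde\varphi(\cdot-z)\|_{L^2}^2\,dz$ followed by the translation estimate $\|\tilde\varphi(\cdot)-\tilde\varphi(\cdot-z)\|_{L^2}\le |z|\,\|\nabla\tilde\varphi\|_{L^2}$ is precisely the standard proof used in that reference, with the zero-extension and Fubini points correctly handled.
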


Let us also introduce  the following notation 

\begin{equation}
	\begin{split}
		&J_{\sigma,i}(z):=\frac{1}{\sigma^N}J_i\left(\frac{z}{\sigma}\right), \qquad \quad p_{\sigma,i}(x):=\int_{\Omega}J_{\sigma,i}(x-y)\,dy, \qquad \quad  D_2(J_i):=\int_{\mathbb R^n }J_{\sigma,i}(z)|z|^2\,dz, 1\leq i\leq N,\\
		&\mathcal{A}(\varphi):=\frac{\int_{\Omega}   \varphi(x)^T A(x)\varphi(x)\,dx} {\|\varphi\|^2_{\mathbf E}},\qquad \quad  \r_{\sigma,m}(\varphi):=\frac{1}{\sigma^m} \frac{\displaystyle\sum_{i=1}^N\int_{\Omega}(p_{\sigma,i}(x)-1)\varphi_i^2(x)\,dx} {\|\varphi\|^2_{\mathbf E}},\\
		&\mathcal{I}_{\sigma,m}(\varphi):=\frac{\frac{1}{\sigma^m}\left(-\displaystyle\sum_{i=1}^N\int_{\Omega}\left(\int_{\Omega}J_i(x-y)\varphi_i(y)\,dy-\varphi_i(x)\right)\varphi_i(x)\,dx\right)}{\|\varphi\|^2_{\mathbf E}} -\mathcal{A}(\varphi)\\
		&\j(\varphi):=\displaystyle\sum_{i=1}^N  \frac{D_2(J_i)}{2}\frac{\displaystyle\int_{\Omega}|\nabla \varphi_i|^2(x)\,dx} {\|\varphi\|^2_{\mathbf E}}.
	\end{split}
\end{equation}
%	&\mathcal A(\varphi):=\frac{\int_{\Omega}a\varphi^2(x)\,dx} {\nlp{\varphi}{2}{\Omega}^2},\qquad \quad % \r_{\sigma,m}(\varphi):=\frac{1}{\sigma^m}\frac{\int_{\Omega}(p_\sigma(x)-1)\varphi^2(x)\,dx} {\nlp{\varphi}{2}{\Omega}^2},\\
% &\mathcal{I}_{\sigma,m}(\varphi):=\frac{\frac{1}{\sigma^m}\left(-\int_{\Omega}\left(\int_{\Omega}J(x-y)\varphi(y)\,dy-\varphi(x)\right)\varphi(x)\,dx\right)}{\nlp{\varphi}{2}{\Omega}^2} -\mathcal A(\varphi)\\
%\int_{\Omega}a\varphi^2(x)\,dx} {\nlp{\varphi}{2}{\Omega}^2},\\
%&\j(\varphi):=\frac{D_2(J)}{2}\frac{\int_{\Omega}|\nabla \varphi|^2(x)\,dx} {\nlp{\varphi}{2}{\Omega}^2}.

With this notation, we see that  
$$\lambda_v(\bm{\mathcal K}_{\sigma,m,\Omega} + \bm{\mathcal A})=\inf_{\varphi \in \mathbf E} \mathcal{I}_{\sigma,m}(\varphi), $$

and by Lemma \ref{bcv-lem-I-le-J}, for any $\varphi \in H_0^1(\Omega)$ we get 

\begin{equation}\label{bcv-eq-I-le-J-R} 
	\mathcal{I}_{\sigma,m}(\varphi)\le \sigma^{2-m}\j(\varphi) -\mathcal A(\varphi).
\end{equation}

We are now in position to obtain the different limits of $\lambda_p(\bm{\mathcal K}_{\sigma,m,\Omega} + \bm{\mathcal A})$ as $\sigma \to 0$ and $\sigma\to \infty$. For simplicity,  we analyse three distinct situations: $m=0$ and $0<m<2$. 

Let us first deal with the easiest case, that is, when $0<m<2$.
\subsubsection{The case $0<m<2$:}\label{SS4.2.1}

\begin{proof}[\textbf{Proof of Theorem \ref{scaling limit}.}]The case $0<m<2$:
	
	First, let us look at the limit  of $\lambda_p$ when $\sigma \to 0$.  
	Up to adding a large positive constant to the function $a$, without any loss of generality,  we can assume that the function $a$ is positive somewhere in $\O$. 
	
	Since $\bm{\mathcal K}_{\sigma,m,\Omega}+A$ is a self-adjoined operator, by \eqref{eq7-5-3} and \eqref{bcv-eq-I-le-J-R}, for any $\varphi \in H^1_0(\Omega)$ we have
	$$\lambda_p(\bm{\mathcal K}_{\sigma,m,\Omega}+ \bm{\mathcal A})=\lambda_v(\bm{\mathcal K}_{\sigma,m,\Omega}+ \bm{\mathcal A})\le \mathcal{I}_{\sigma,m}(\varphi)\le \sigma^{2-m}\j(\varphi) -\mathcal A(\varphi).$$
	
	Define $\nu:=\sup_{\Omega} \bar \lambda(A(x))>0$, and let $(x_k)_{k\in \N^*}$  be a sequence of point such that $\bar \lambda(A(x_k))>\nu-\frac{1}{k}$. Since $A$ satisfy the condition ($P$), we can also assume that for all $k$, there exists $r_k>0$ such that
$\bar \lambda(A(x))>\nu-\frac{1}{k}$ for all $x\in B(x_k,r_k)$. Moreover, by Lemma \ref{keylemma-cutoff-function}, one can find a sequence of smooth functions $\{\varphi_k\}$ with $\supp \varphi_k \subset B(x_k, r_k)$ such that 
$$
\frac{ \int_{B(x_k,r_k)} \varphi_\rho^T(x)A(x)\varphi_\rho(x)\,dx}{\nlp{\varphi_\rho}{2}{\Omega}^2}\geq \nu-\frac{2}{k}, \; \; \forall k\in \mathbb N^*,
$$
 and  therefore, 	
$$
	\limsup_{\sigma\to 0} \lambda_p(\bm{\mathcal K}_{\sigma,m,\Omega}+ \bm{\mathcal A})\le -{\mathcal A}(\varphi_\rho)=-\frac{\int_{B(x_k,r_k)} \varphi_\rho^T(x)A(x)\varphi_\rho(x)\,dx}{\nlp{\varphi_\rho}{2}{\Omega}^2}\leq -\nu+\frac{2}{k} , \; \forall k\in \mathbb N^*.
$$  
Hence, by sending now $k \to \infty$ in the above inequality, we obtain	
$$
	\limsup_{\sigma\to 0} \lambda_p(\bm{\mathcal K}_{\sigma,m,\Omega}+ \bm{\mathcal A})\leq -\nu.
$$
	
	On the other hand, by using the test functions $(\varphi^i,\lambda)=({\bf e_i},-\nu)$ we can easily check that for any $\sigma >0$
	$$\lambda_p(\bm{\mathcal K}_{\sigma,m,\Omega}+ \bm{\mathcal A})\geq -\nu.$$
	(Here, we use the fact that $\| A(x) e_i\| \leq \bar\lambda(A(x)) \|e_i\|=\bar\lambda(A(x)) \leq \nu$.)
	Hence, 
	$$-\nu\le \liminf_{\sigma\to 0} \lambda_p(\bm{\mathcal K}_{\sigma,m,\Omega}+ \bm{\mathcal A})\leq \limsup_{\sigma\to 0} \lambda_p(\bm{\mathcal K}_{\sigma,m,\Omega}+ \bm{\mathcal A})\le -\nu.$$

	Now, let us  look at the limit of $\lambda_p(\bm{\mathcal K}_{\sigma,m,\Omega}+ \bm{\mathcal A})$ when $\sigma \to +\infty$. 
	This limit is a straightforward consequence of Lemma \ref{estimate-eigenvalue}.
	Indeed, as remarked above, for any $\sigma$ by using the test function $(\varphi^i,\lambda)=({\bf e_i},-\nu)$, we have
	$$-\nu \le \lambda_p(\bm{\mathcal K}_{\sigma,m,\Omega}+ \bm{\mathcal A})$$
	whereas from Lemma \ref{estimate-eigenvalue} we have 
	$$ \lambda_p(\bm{\mathcal K}_{\sigma,m,\Omega}+ \bm{\mathcal A})\leq -\sup_{\Omega}\left(\bar \lambda(A(x))\right)+\frac{1}{\sigma^m}. $$
	Therefore, since $m>0$ we have 
	
	$$-\nu \le \lim_{\sigma\to +\infty} \lambda_p(\bm{\mathcal K}_{\sigma,m,\Omega}+ \bm{\mathcal A})\le -\nu. $$
\end{proof}

\begin{remark}\label{bcv-pev-rem-asymp}.
	From the proof, we  obtain also some of the limits in the  cases  $m=0$ and $m=2$.  Indeed, the analysis of the limit of $\lambda_p(\bm{\mathcal K}_{\sigma,m,\Omega}+ \bm{\mathcal A})$ when $\sigma \to 0$ holds true as soon as $m<2$. Thus,  $$\lambda_p(\bm{\mathcal K}_{\sigma,0,\Omega}+ \bm{\mathcal A})\to -\sup_{\Omega}  \bar\lambda(A(x)) \quad\text{ as } \quad\sigma \to 0.$$
	On the other hand, the  analysis of the  limit of $\lambda_p(\bm{\mathcal K}_{\sigma,m,\Omega}+ \bm{\mathcal A})$ when $\sigma \to +\infty$ holds true as soon as $m>0$. Therefore, $$\lambda_p(\bm{\mathcal K}_{\sigma,2,\Omega}+ \bm{\mathcal A})\to -\sup_{\Omega} \bar\lambda(A(x)) \quad\text{ as } \quad\sigma \to +\infty.$$ 
	
\end{remark}

\subsubsection{The case $m=0$}
In this situation,   one cannot use above argument to obtain the  limits as $m=0$, new idea must be figured out to obtain the right limits. Indeed, we can prove:

\begin{proof}[\textbf{Proof of Theorem \ref{scaling limit}. }] The case $m=0$.\\

	As already noticed in Remark \ref{bcv-pev-rem-asymp}, the limit of $\lambda_p(\bm{\mathcal K}_{\sigma,0,\Omega}+ \bm{\mathcal A})$ when $\sigma \to 0$ can be obtained by following the arguments developed in the case $0<m<2$. Therefore, it remains only to establish the limit of $\lambda_p(\bm{\mathcal K}_{\sigma,0,\Omega}+ \bm{\mathcal A})$ when $\sigma \to \infty$.
	
As above, up to adding a large positive constant matrix to $A(x)$, without any loss of generality, we can assume that $A(x)$ is positive somewhere in $\O$ and we denote  $\nu:=\sup_{\Omega}\bar\lambda(A(x))>0$. 
	By using constant test functions and Lemma \ref{estimate-eigenvalue}, we observe that

	$$-\nu\le \lambda_p(\bm{\mathcal K}_{\sigma,0,\Omega}+ \bm{\mathcal A})\le 1 -\nu, \quad \text{ for all }\quad \sigma >0.$$
	So, we have $$\limsup_{\sigma\to \infty}\lambda_p(\bm{\mathcal K}_{\sigma,0,\Omega}+ \bm{\mathcal A})\le 1 -\nu.$$
	On the other hand, for any $\varphi \in \mathbf C_c(\Omega)$  we have for all $\sigma>0$,
	
	\begin{align*}
		&\mathcal{I}_{\sigma,0}(\varphi) \displaystyle \sum_{i=1}^N \int_{\Omega}\varphi_i^2(x)dx=-\displaystyle \sum_{i=1}^N\int_{\Omega}\left(\int_{\Omega} J_{\sigma,i}(x-y)\varphi_i(y)\,dy-\varphi_i(x)\right)\varphi_i(x)dx -\int_{\Omega} \varphi(x)^TA(x)\varphi(x)\,dx\\
		&=-\displaystyle \sum_{i=1}^N \iint_{\Omega\times \Omega} J_{\sigma,i}(x-y)\varphi_i(x)\varphi_i(y)dxdy +\displaystyle \sum_{i=1}^N \int_{\Omega}\varphi_i^2(x)\,dx -\int_{\Omega} \varphi(x)^TA(x)\varphi(x)\,dx\\
		&\geq-\displaystyle \sum_{i=1}^N  \nlp{\varphi_i}{2}{\Omega}\left(\int_{\Omega}\left( \int_{\Omega} J_{\sigma,i}(x-y)\varphi_i(x)\,dx\right)^2\,dy\right)^{1/2} +\displaystyle \sum_{i=1}^N \int_{\Omega}\varphi_i^2(x)\,dx -\sup_{\Omega}\bar\lambda(A(x))\displaystyle \sum_{i=1}^N\int_{\Omega}\varphi^2(x)\,dx\\
		&\geq-\displaystyle \sum_{i=1}^N\sqrt{\|J_{\sigma,i}\|_{\infty}}\nlp{\varphi_i}{2}{\Omega}^2+\displaystyle \sum_{i=1}^N\int_{\Omega}\varphi_i^2(x)\,dx -\nu\displaystyle \sum_{i=1}^N\int_{\Omega}\varphi_i^2(x)\,dx\\
		&\geq\left(-\frac{\max_{i=1,\ldots, N}\sqrt{\|J_i\|_{\infty}}}{\sigma^{N/2}}+1-\nu \right) \displaystyle \sum_{i=1}^N\int_{\Omega}\varphi_i^2(x)\,dx.
	\end{align*}
Thus, for all $\sigma>0$ we have 
	$$ \mathcal{I}_{\sigma,0}(\varphi)\geq \left(-\frac{\max_{i=1,\ldots, N}\sqrt{\|J_i\|_{\infty}}}{\sigma^{N/2}}+1-\nu \right).$$
	By density of $\mathbf C_c(\Omega)$ in  $L^2(\Omega)$, the above inequality holds  for any $\varphi \in \mathbf E$.
	
	Therefore, by (\ref{eq7-5-3}) for all $\sigma$
	$$
	\lambda_p(\bm{\mathcal K}_{\sigma,0,\Omega} + \bm{\mathcal A})=\lambda_v(\bm{\mathcal K}_{\sigma,0,\Omega} + \bm{\mathcal A})\geq -\frac{\sqrt{\|J\|_{\infty}}}{\sigma^{N/2}}+ 1 -\nu,
	$$
	and 
	$$ \liminf_{\sigma\to +\infty} \lambda_p(\bm{\mathcal K}_{\sigma,0,\Omega} + \bm{\mathcal A})\geq 1 -\nu.$$
	Hence,
	$$1 -\nu\le \liminf_{\sigma\to +\infty} \lambda_p(\bm{\mathcal K}_{\sigma,0,\Omega} + \bm{\mathcal A})\le \limsup_{\sigma\to +\infty} \lambda_p(\bm{\mathcal K}_{\sigma,0,\Omega} + \bm{\mathcal A}) \leq 1 -\nu.$$
\end{proof}

 \textbf{Conflicts of interest:} The authors have no conflicts of interest to declare that are relevant to the content of this article.

\end{document}